\theoremstyle{plain}
\newtheorem{theorem}{Theorem}[section]
\newtheorem{lemma}{Lemma}[section]
\newtheorem{lem}{Lemma}[subsection]
\newtheorem{example}{Example}
\newtheorem{corollary}{Corollary}[section]
\newtheorem{proposition}{Proposition}[section]
\theoremstyle{definition}
\newtheorem*{remark}{Remark}
\newcommand{\ddr}{\mathrm{d}}
\newcommand{\etalchar}[1]{$^{#1}$}
\providecommand{\bysame}{\leavevmode\hbox to3em{\hrulefill}\thinspace}
\providecommand{\MR}{\relax\ifhmode\unskip\space\fi MR }
\providecommand{\href}[2]{#2}
\title[Logistic CSBPs: duality and reflection at infinity]{Continuous-state branching processes with competition:\\
duality and reflection at infinity}
\author{Cl\'ement Foucart}\thanks{foucart@math.univ-paris13.fr, Universit\'e Paris 13,  Laboratoire Analyse, G\'eom\'etrie \& Applications UMR 7539 Institut Galil\'ee} 
\newcommand{\zmin}{\ensuremath{Z^{\text{min}}}}
\newcommand{\pmin}{\ensuremath{P^{\text{min}}}}
\newcommand{\paren}[1]{\ensuremath{\left( #1\right) }}
\newcommand{\imf}[2]{\ensuremath{#1\!\paren{#2}}}
\keywords{{Continuous-state branching process}, {generalized Feller diffusion}, {branching process with interaction}, {explosion}, {coming down from infinity}, {entrance boundary}, {reflecting boundary}, {Lamperti's time change}, {duality}.}
\subjclass[2010]{60J80, 60J70,92D25}
\begin{document}

\begin{abstract}
The boundary behavior of continuous-state branching processes with quadratic competition is studied in whole generality. We first observe that despite competition, explosion can occur for certain branching mechanisms. We obtain a necessary and sufficient condition for $\infty$ to be accessible in terms of the branching mechanism and the competition parameter $c>0$. We show that when $\infty$ is inaccessible, it is always an entrance boundary. In the case where $\infty$ is accessible, explosion can occur either by a single jump to $\infty$ (the process at $z$ jumps to $\infty$  at rate $\lambda z$ for some $\lambda>0$) or by accumulation of large jumps over finite intervals.  We construct a natural extension of the minimal process and show that when $\infty$ is accessible and $0\leq \frac{2\lambda}{c}<1$, the extended process is reflected at  $\infty$. In the case $\frac{2\lambda}{c}\geq 1$, $\infty$ is an exit of the extended process. 
When the branching mechanism is not the Laplace exponent of a subordinator, we show that the process with reflection at $\infty$ get extinct almost-surely. Moreover absorption at $0$ is almost-sure if and only if Grey's condition is satisfied. 
When the branching mechanism is the Laplace exponent of a subordinator, necessary and sufficient conditions are given for a stationary distribution to exist. The Laplace transform of the latter is provided.  The study is based on classical time-change arguments and on a new duality method relating logistic CSBPs  with certain generalized Feller diffusions.
\end{abstract}
%

%

\maketitle
\section{Introduction}
Continuous-state branching processes (CSBPs for short) have been defined by Ji\v{r}ina \cite{Jirina} and Lamperti \cite{MR0208685} for modelling the size of a random continuous population whose individuals reproduce and die independently with the same law. Lamperti \cite{MR0217893} and Grimvall \cite{MR0362529} have shown that these processes arise as scaling limits of Galton-Watson Markov chains. Their laws are characterised in terms of a L\'evy-Khintchine function $\Psi$ (called a branching mechanism). A shortcoming of CSBPs for modelling population lies in their degenerate longterm behavior. In the long run, a CSBP either tends to $0$ or to $\infty$. On the event of extinction, the process can decay indefinitely or be absorbed at $0$ in finite time. Similarly, on the event of non-extinction, the CSBP can grow indefinitely or be absorbed at $\infty$ in finite time. The latter event is called explosion and occur typically when the process performs infinitely many large jumps in a finite time with positive probability.  Since the sixties, several generalizations of CSBPs have been defined to overcome various unrealistic properties of pure branching processes. Lambert \cite{MR2134113} has introduced a generalization of these processes by incorporating pairwise interactions between individuals. These processes, called logistic continuous-state branching processes, are the random analogues of the logistic equation 
\begin{equation}\label{deterministic}\ddr z_t=\gamma z_t\ddr t-\frac{c}{2}z^2_t\ddr t,
\end{equation}
where, informally speaking, the Malthusian growth $\gamma z_t \ddr t$ is replaced by the full dynamics of a continuous-state branching process. For instance, when the mechanism $\Psi$ of the CSBP reduces to $\Psi(z)=\frac{\sigma^2}{2}z^2-\gamma z$, the process $(Z_t,t\geq 0)$ is the logistic Feller diffusion
\begin{equation}\label{continuouscbc} \ddr Z_t=\sigma\sqrt{Z_t}\ddr B_t+\gamma Z_t\ddr t-\frac{c}{2}Z_t^2\ddr t. 
\end{equation}

The negative quadratic drift represents additional deaths occurring due to \textit{pairwise fights} among individuals. Intuitively, these fights  can be interpreted as competition (for resources for instance). We refer to Le, Pardoux and Wakolbinger \cite{MR3034788} and Berestycki, Fittipaldi and Fontbona \cite{Berestycki2017} for a study of the competition at the level of the genealogy. In a logistic CSBP, individuals and their progenies are not independent between each other, and the branching property, from which all properties of CSBPs can be deduced, is lost. One of the main interest of logistic CSBPs is to provide a model of population with a possible self-limiting growth. The objective of this article is to study these processes with most general mechanisms and to understand precisely how the competition regulates the growth. We shall study the nature of the boundaries $0$ (extinction of the population) and $\infty$ (explosion of the population).  

Throughout this article, we follow the terminology of Feller, introduced in \cite{MR0063607}, for classifying boundaries of a diffusion (see Section \ref{preliminaries} for their meaning). The state of the art is as follows. In the continuous case (\ref{continuouscbc}), Feller tests provide that $\infty$ is an entrance and $0$ an exit. For a general mechanism $\Psi$, the logistic CSBP has (typically unbounded) positive jumps and such general tests do not exist. Lambert in \cite{MR2134113} has found a set of sufficient conditions over the mechanism $\Psi$ for $\infty$ to be an entrance boundary. Under these conditions, it is also shown in \cite{MR2134113} that the competition alone has no impact on the extinction of the population. In other words, the boundary $0$ is an exit if and only if the branching mechanism $\Psi$ satisfies Grey's condition. The entrance property from $\infty$ coincides with the notion of coming down from infinity observed in many stochastic models. We refer for instance to Cattiaux et al. \cite{cattiaux2009} and Li \cite{LI2018} for recent related works and to Donnelly \cite{MR1112408} for a classical result in coalescent theory.
We shall follow a different route than \cite{MR2134113} and study directly the semigroup of logistic CSBPs. A rather surprising first phenomenon is that the competition does not always prevent explosion. Some reproduction laws have large enough tails for $\infty$ to be accessible (meaning for explosion to occur). We provide a necessary and sufficient condition for $\infty$ to be inaccessible and show that under this condition the boundary $\infty$ is an entrance. Since the competition pressure increases with the size of the population, one may wonder if some compensation occurs near the boundary $\infty$ for a general mechanism $\Psi$. The main contribution of this article is to answer the following question. 
\textit{Is it possible for a logistic continuous-state branching process to leave and return to $\infty$ in finite time?} We shall indeed see that the reproduction can be strong enough for explosion to occur and the quadratic competition strong enough to instantaneously push the population size back into $[0,\infty)$ after explosion. This phenomenon is captured by the notion of regular \textit{instantaneous reflecting} boundary. By reflecting, we mean here that the Lebesgue measure of $\{t\geq 0, Z_t=\infty\}$ is zero almost-surely. The boundary is instantaneous in the sense that starting from $\infty$ the process enters immediately $(0,\infty)$.  Only in some cases, for which explosion is made by a single jump, the boundary $\infty$ is an exit. We stress also that it may well occur that the population goes extinct after exploding, so that $\infty$ is not always recurrent. 

In order to classify the boundaries as explained above, we need to define an extension of the minimal process in $[0,\infty]$. This requires in general a deep study of the minimal semi-group. However, since processes with competition do not satisfy the branching property, most arguments for CSBPs are not applicable. The resolvent of logistic CSBPs is rather involved and we will not discuss all possibilities of extensions in this article but only construct a natural one by approximation. We first establish a \textit{duality relationship} between non-explosive logistic continuous-state branching processes and some generalizations of the logistic Feller diffusion process (\ref{continuouscbc}). Namely we will show that when  $\infty$ is inaccessible for the process $(Z_t,t\geq 0)$,
then for any $x\geq 0$, $z\in [0,\infty[$ and $t\geq 0$,
$$(\star) \qquad \mathbb{E}_z(e^{-xZ_t})=\mathbb{E}_x(e^{-zU_t})$$
where $(U_t, t\geq 0)$ is a solution to
\begin{equation}\label{GFDi}\ddr U_t=\sqrt{cU_t}\ddr B_t-\Psi(U_t)\ddr t, \quad U_0=x.\end{equation}
We shall see that the condition for $\infty$ to be inaccessible for $(Z_t,t\geq 0)$ is precisely given by Feller's test for $0$ to be an exit of $(U_t, t\geq 0)$. 
We stress that Equation (\ref{GFDi}) has not always a unique solution as $0$ can be regular for certain non-lipschitz mechanisms $\Psi$. It is precisely for such mechanisms that $\infty$ will be regular for logistic CSBPs. 
Heuristically, if $(\star)$ holds for some processes $(Z_t,t\geq 0)$ and $(U_t,t\geq 0)$, then the entrance boundaries of $(Z_t,t\geq 0)$ will be classified in terms of exit boundaries of $(U_t,t\geq 0)$. We refer to Cox and R\"osler \cite{MR724061} and Liggett \cite{MR2108619} for a  study of boundaries by duality of semi-groups. The identity $(\star)$ provides a representation of the semi-group of any non-explosive process with competition and  will allow us to construct an extended process over $[0,\infty]$ with $\infty$ reflecting \textit{as a limit of non-explosive processes}. We highlight that this construction is different from the classical It\^o's concatenation procedure for building recurrent extensions. 
In particular, our approach is not based on a measure theoretical description of the excursions from $\infty$ but on a direct description of the extended semi-group.  \\

A very similar phenomenon of reflection at $\infty$ has been recently observed by Kyprianou et al. \cite{kyprianou2017} for a certain exchangeable fragmentation-coalescence process.  We shall observe the same phase transition between the reflecting boundary case and the exit boundary one. Lambert \cite{MR2134113} and Berestycki \cite{MR2110018} have noticed that discrete logistic branching processes share many properties with the number of fragments in some exchangeable coalescence-fragmentation processes. Discrete logistic branching processes are interesting in their own and will be studied elsewhere.  We highlight that contrary to the process studied in  \cite{kyprianou2017}, a logistic CSBP can reach $\infty$ by accumulation of large jumps over a finite interval of time. 
We mention that the duality $(\star)$ has been observed in a spatial context for the branching mechanism $\Psi(u)=\frac{\sigma^2}{2}u^2-\gamma u$ by Horridge and Tribe \cite{MR2096217} for the logistic SPDE, see also Hutzenthaler and Wakolbinger \cite{MR2308333}. Lastly, other competition mechanisms than the quadratic drift have been studied. We refer for instance to the monograph of Pardoux \cite{MR3496029} and Ba and Pardoux \cite{ba2015} for some generalisations of the logistic Feller diffusions. 
It is worth noticing that the relation $(\star)$ does not hold for general competition mechanims. 
\\
 
The paper is organised as follows. In Section \ref{preliminaries}, we recall some known facts about CSBPs and define minimal logistic CSBPs through a martingale problem. We state our main results in Section \ref{mainresults} and describe some examples. In Section \ref{existence}, we show how to solve the martingale problem by time-changing an Ornstein-Uhlenbeck type process. Some first properties of the minimal process, such as a criterion for its explosion, are derived from this time-change. In Section \ref{entrancesection}, we gather the possible behaviors of the diffusion $(U_t,t\geq 0)$ at its boundaries. Then we establish the duality under the non-explosion assumption and deduce the entrance property. In Section \ref{regularsection}, we define and study an extension of the minimal process. Lastly, in Appendix, we provide the calculations  needed for classifying the boundaries of $(U_t,t\geq 0)$ according to $\Psi$ and the parameter $c$.
\section{Preliminaries}\label{preliminaries}
\noindent As we will use Feller's terminology repeatedly, we briefly recall how to classify boundaries. Consider a process valued in an interval $(a,b)$ with $a,b\in \bar{\mathbb{R}}$ and $a<b$,
\begin{itemize}
\item[-] the boundary $b$ is said to be \textit{accessible} if there is a positive probability that it will be reached in finite time (the process can enter into $b$). If $b$ is accessible, then 
either the process cannot get out from $b$ and the boundary $b$ is said to be an \textit{exit}
or the process can get out from $b$ and the boundary $b$ is called a \textit{regular} boundary. 
\item[-] If the boundary $b$ is \textit{inaccessible}, then 
either the process cannot get out from $b$, and the boundary $b$ is said to be \textit{natural}
or
the process can get out from $b$ and the boundary $b$ is said to be an \textit{entrance}.\\
\end{itemize}
\textbf{Notation.} We denote by $[0,\infty]$ the extended half-line and by $C_b([0,\infty])$ the space of continuous real-valued functions defined over $[0,\infty]$. Since $[0,\infty]$ is compact, any function $f\in C_b(|0,\infty])$ is bounded. We set $\mathcal{D}([0,\infty])$ the space of c\`adl\`ag functions from $\mathbb{R}_+$ to $[0,\infty]$. For any interval $I\subset \mathbb{R}$, we denote by $C_c^{2}(I)$ the space of continuous functions over $I$ with compact support that have continuous first two derivatives.
\\

We recall the definition and some basic properties of continuous-state branching processes without competition. 
Most of the sequel can be found in Chapter 12 of Kyprianou's book \cite{MR3155252}. 
A CSBP is a Feller process $(X_t,t\geq 0)$ valued in $[0,\infty]$ satisfying the branching property: for any $z,z'\geq 0$, $t\geq0$ and $x>0$
\begin{equation*}\label{branching}
\mathbb{E}_{z+z'}[e^{-xX_t}]=\mathbb{E}_{z}[e^{-xX_t}]\mathbb{E}_{z'}[e^{-xX_t}].
\end{equation*}
The branching and Markov properties ensure the existence of a map $(x,t)\mapsto u_{t}(x)$ such that for all $x>0$ and all $t,s\geq 0$, $u_t(x)\in (0,\infty)$,
\begin{equation}\label{cumulant}
\mathbb{E}_z[e^{-xX_{t}}]=e^{-zu_{t}(x)}  \text{ and } u_{s+t}(x)=u_{s}\circ u_t(x).
\end{equation}
Silverstein in \cite[Theorem 4, page 1046]{MR0226734} has shown that the map $t\mapsto u_{t}(x)$ is the unique solution to a non-linear ordinary differential equation 
\begin{equation}\label{cumulantode} \frac{\ddr}{\ddr t}u_t(x)=-\Psi(u_t(x)) \quad \text{for all } x \in (0,\infty)
\end{equation}
where $\Psi$ is a L\'evy-Khintchine function of the form
\begin{equation} \label{Levykhintchine} \Psi(z)=-\lambda+\frac{\sigma^{2}}{2}z^{2}+\gamma z+\int_{0}^{+\infty}\left(e^{-zx}-1+zx\mathbbm{1}_{\{x\leq 1\}}\right)\pi(\ddr x)
\end{equation}
with $\lambda\geq 0, \gamma\in\mathbb{R}$, $\sigma \geq 0$, and $\pi$ a Borel measure carried on $\mathbb{R}_{+}$ satisfying
\begin{equation*}
\int_{0}^{+\infty} (1\wedge x^{2}) \pi(\ddr x)<+\infty.
\end{equation*}
Any branching mechanism $\Psi$ is Lipschitz on compact subsets of $(0,\infty)$ and thus the deterministic equation (\ref{cumulantode}) admits a unique solution. As in Silverstein \cite{MR0226734}, we interpret the killing term with parameter $\lambda$ as the possibility for the process to jump to $\infty$ in finite time. 
Since for any $t\geq 0$ and any $x>0$, $u_t(x)>0$ then according to the semi-group equation (\ref{cumulant}), $\infty$ and $0$ are either natural or exit boundaries. Grey \cite{MR0408016} classifies further the boundaries $\infty$ and $0$ of a CSBP as follows.
\begin{itemize}
\item[-] The boundary $\infty$ is accessible if and only if $\int_{0+}\frac{\ddr u}{|\Psi(u)|}<+\infty$. 
\item[-] The boundary $0$ is accessible if and only if $\int^{\infty}\frac{\ddr u}{\Psi(u)}<\infty$ (Grey's condition)
\end{itemize}
The integral conditions above ensure respectively the existence of a non-degenerate solution of (\ref{cumulantode}) started from $x=0+$ and $x=\infty$. 
It is important to note that $\lambda=0$ is necessary for $\infty$ to be inaccessible but not sufficient. Indeed, the process can \textit{explode continuously} by having unbounded paths over finite time intervals. A basic example is provided by the stable mechanism $\Psi(z)=-z^{\alpha}$ for $\alpha \in (0,1)$ which satisfies $\int_{0+}\frac{\ddr u}{|\Psi(u)|}<\infty$. 
We now recall the longterm behavior of CSBPs. We refer to Theorem 12.5 in \cite{MR3155252} for a complete classification. Denote by $\rho$ the largest positive root of $\Psi$.  For any $z\in [0,\infty]$, \[\mathbb{P}_z(X_t \underset{t\rightarrow \infty}{\longrightarrow} 0)=e^{-z\rho} \text{ and } \mathbb{P}_z(X_t \underset{t\rightarrow \infty}{\longrightarrow} \infty)=1-e^{-z\rho}.\]
When $-\Psi$ is the Laplace exponent of a subordinator then $\rho=\infty$ and the process goes to $\infty$ almost-surely. When $-\Psi$ is not the Laplace exponent of a subordinator then $\rho<\infty$, and the process goes to $0$ with positive probability. If moreover $\int^{\infty}\frac{\ddr u}{\Psi(u)}=\infty$ then $X_t \underset{t\rightarrow \infty}{\longrightarrow} 0$ with positive probability albeit $X_t>0$ for all $t\geq 0$ almost-surely. In the latter case, we say that $0$ is \textit{attracting}. A classical construction of a CSBP with mechanism $\Psi$ is by time-changing a spectrally positive L\'evy process with Laplace exponent $-\Psi$ (see for instance Lamperti \cite{MR0208685}, Caballero, Lambert and Uribe-Bravo \cite{MR2592395}). In particular, the sample paths of a c\`adl\`ag CSBP have no negative jump and are non-decreasing when $-\Psi$ is the Laplace exponent of a subordinator. This time-change leads to the following form for the extended generator of $(X_t,t\geq 0)$. For any $f\in \mathcal{C}^2_c((0,\infty))$ \footnote{the space of twice continuously differentiable functions vanishing outside a compact subset of $(0,\infty)$.}
\begin{equation*}
\mathscr{G}f(z):=
-\lambda zf(z)+\frac{\sigma^{2}}{2}zf''(z)-\gamma zf'(z)+z\int_{0}^{\infty}(f(z+u)-f(z)-u1_{[0,1]}(u)f'(z))\pi(\ddr u).
\end{equation*}To incorporate quadratic competition, one considers an additional negative quadratic drift  in the extended generator above and set \begin{equation}\label{generatormin}
\mathscr{L}f(z):=\mathscr{G}f(z)-\frac{c}{2}z^2f'(z).
\end{equation}
We define a \textit{minimal} logistic continuous-state branching process as a c\`adl\`ag Markov process $(\zmin_t,t\geq 0)$  on $[0,\infty]$ with $0$ and $\infty$ absorbing, satisfying the following martingale problem \textbf{(MP)}. For any function $f\in C^2_c((0,\infty))$, the process 
\begin{equation}\label{MP}
 t \in [0,\zeta)\mapsto f(\zmin_t)-\int_0^t \mathscr{L} f(\zmin_s)\, \ddr s 
\end{equation}
is a martingale under each $\mathbb{P}_z$,   
with $\zeta:=\inf\{t\geq 0; Z_t\notin (0,\infty)\}$. 
By minimal process, we mean that the process remains at $\infty$ from its first (and only) explosion time $\zeta_\infty:=\inf\{t\geq 0, \zmin_t=\infty\}$.  As already observed by Lambert \cite{MR2134113}, one way to construct a minimal logistic CSBP is by time-changing an Ornstein-Uhlenbeck type process. The problem of explosion is not discussed in \cite{MR2134113} and we shall give out some details in Section \ref{existence}. In the sequel, we say that a process $(Z_t,t\geq 0)$ extends the minimal process if $(Z_t,t\geq 0)$ takes its values in $[0,\infty]$ and $(Z_{t\wedge \zeta_\infty},t\geq 0)\overset{\mathcal{L}}{=}(\zmin_t,t\geq 0)$. Note that elementary return processes restarting after explosion from states in $(0,\infty)$ are ruled out from our study. We will only be interested in the existence of a continuous extension $(Z_t,t\geq 0)$, for which $Z_t\underset{t\rightarrow 0}{\longrightarrow} \infty$, $\mathbb{P}_\infty$-almost-surely.  As explained in the introduction, the semi-group of logistic CSBPs will be represented in terms of a certain diffusion. For any mechanism $\Psi$ of the form (\ref{Levykhintchine}), we call $\Psi$-generalized Feller diffusion, the minimal diffusion $(U_t,t<\tau)$  solving
\begin{equation}\label{GFD}\ddr U_t=\sqrt{cU_t}\ddr B_t-\Psi(U_t)\ddr t, \quad U_0=x
\end{equation}
where $(B_t, t\geq 0)$ is a Brownian motion and $\tau:=\inf\{t; U_t\notin (0,\infty)\}$. As $u\mapsto \sqrt{u}$ is $1/2$-H\"older and $\Psi$ is locally Lipschitz, standard results (see e.g. \cite[Section 3, Chapter IX]{MR1725357}) ensure the existence and uniqueness of a strong solution to Equation (\ref{GFD}) up to time $\tau$. Note that (\ref{GFD}) coincides with (\ref{cumulantode}) when $c=0$. The duality between the $\Psi$-generalized Feller diffusion and the logistic CSBP can be easily seen at the level of generators, see the forthcoming Lemma \ref{generatordualitylemma}. Duality of semigroups requires more work and is part of the main results.
\section{Main results}\label{mainresults}
\begin{theorem}[Accessibility of $\infty$]\label{explosion} Assume $c>0$. The boundary $\infty$ is inaccessible for $(\zmin_t,t\geq 0)$ if and only if $$\mathcal{E}:=\int_{0}^{\theta}\frac{1}{x}\exp {\left(\frac{2}{c}\int_x^\theta \frac{\imf{\Psi}{u}}{u}\, \ddr u\right)}\, \ddr x=\infty,
\text{ for some (and then for all) }\theta >0.$$
\end{theorem}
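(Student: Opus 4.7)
The plan is to transfer the accessibility question at $\infty$ for $\zmin$ into a Feller boundary test at $0$ for the dual diffusion $U$ of (\ref{GFDi}), and to identify $\mathcal{E}$ with the corresponding speed-measure integral. Concretely, for $dU_t = \sqrt{cU_t}\,dB_t - \Psi(U_t)\,dt$ the scale derivative is $s'(x) = \exp\bigl(\tfrac{2}{c}\int_\theta^x \Psi(u)/u\,du\bigr)$ and the speed density is $m(x) = 2/(cxs'(x))$, whence
$$\mathcal{E} = \int_0^\theta \frac{dx}{x\,s'(x)} = \frac{c}{2}\int_0^\theta m(x)\,dx.$$
Thus $\mathcal{E}=\infty$ is precisely the condition that the speed measure of $U$ is infinite near $0$, which by the classical Feller classification is equivalent to $0$ being either an exit or a natural boundary of $U$ (i.e., neither regular nor entrance).

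To link this boundary behaviour of $U$ at $0$ to the accessibility of $\infty$ for $\zmin$, I would use the generator duality $\mathscr{L}_z e^{-uz} = \mathscr{A}_{U,u}e^{-uz}$ of Lemma \ref{generatordualitylemma}, where $\mathscr{A}_U f(u)=\tfrac{cu}{2}f''(u)-\Psi(u)f'(u)$. To avoid circularity (the duality $(\star)$ is asserted only under non-explosion), I would work with truncated processes: cap $Z$ at level $N$ to obtain a non-explosive $Z^{(N)}$, for which the Laplace identity $\mathbb{E}_z[e^{-uZ^{(N)}_t}] = \mathbb{E}_u[e^{-zU^{(N)}_t}]$ holds on the compact state space by direct Kolmogorov computation. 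Passing $N \to \infty$ by monotone convergence yields $\mathbb{E}_z[e^{-u\zmin_t}\mathbf{1}_{\zeta_\infty > t}] = \mathbb{E}_u[e^{-zU_t}]$, and letting $u \to 0^+$ gives
$$\mathbb{P}_z(\zmin_t<\infty) = \lim_{u \to 0^+}\mathbb{E}_u[e^{-zU_t}].$$
This limit equals $1$ for all $z,t>0$ iff $U_t \Rightarrow 0$ in law as $u \to 0^+$, iff $0$ is exit or natural for $U$, iff $\mathcal{E}=\infty$.

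The main obstacle is the limit argument $u \to 0^+$ in the case when $0$ is a regular boundary for $U$: there the limit is strictly less than $1$ and depends on the reflection at $0$, corresponding to strict accessibility of $\infty$ for $\zmin$, and extracting this quantitative statement from the truncated approximations requires care. A cleaner alternative, presumably followed in Section \ref{existence}, is the Lamperti-type time-change representation $\zmin_t = Y_{\alpha(t)}$ of $\zmin$ from the L\'evy-driven Ornstein-Uhlenbeck type process solving $dY_s = dL_s - \tfrac{c}{2}Y_s\,ds$ (with $L$ the spectrally positive L\'evy process of Laplace exponent $-\Psi$), where $\mathcal{E}$ arises as a Khasminskii-type integrand classifying when the inverse Lamperti clock $\int du/Y_u$ reaches a finite limit, equivalently when $\zmin$ explodes.
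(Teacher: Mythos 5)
Your main route breaks down at the transfer step, and the failure is not merely technical. The truncation does \emph{not} yield the identity $\mathbb{E}_z[e^{-u\zmin_t}\mathbbm{1}_{\{\zeta_\infty>t\}}]=\mathbb{E}_u[e^{-zU_t}]$. If you cap the state space, the generator duality $\mathscr{L}e_x(z)=\mathscr{A}e_z(x)$ is destroyed near the cap and there is no ``direct Kolmogorov computation'' producing a dual diffusion for the stopped process; if instead you truncate the L\'evy measure (as the paper does with $\Psi_k$), then the Laplace transforms of the approximations converge to that of an \emph{extension} of the minimal process --- the one with $\infty$ reflecting when $0\leq 2\lambda/c<1$ (Theorem \ref{regularboundarytheo}, Lemma \ref{approxU0}) --- and not to $\mathbb{E}_z[e^{-u\zmin_t}]$, because after $\zeta_\infty$ the extension re-enters $(0,\infty)$ and contributes positively while the minimal process contributes $0$. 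Your final test then fails quantitatively: when $0$ is regular \emph{absorbing} for $U^0$ one has $\lim_{u\to0^+}\mathbb{E}_u[e^{-zU^0_t}]=\mathbb{E}_{0+}[e^{-zU^0_t}]=1$ --- this is precisely how the paper shows $\mathbb{P}_z(Z_t<\infty)=1$ for the reflected extension in Lemma \ref{reflectinglemmaZ} --- so your limit equals $1$ both when $\mathcal{E}=\infty$ and when $\mathcal{E}<\infty$ with $2\lambda/c<1$, and cannot detect accessibility of $\infty$ for the minimal process. (Your aside that the limit is ``strictly less than $1$'' in the regular case has it backwards: it is $<1$ only in the entrance case $2\lambda/c\geq 1$.) A duality formula for the minimal process itself would require the dual diffusion with $0$ reflecting, which the paper explicitly leaves open; the circularity you tried to avoid therefore reappears.

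The one-sentence alternative at the end of your proposal is indeed the paper's actual proof (Lemma \ref{explosionlemma}), but naming it is not executing it. The argument requires: (i) the Lamperti time-change identification, under which explosion is the event $\{\sigma_0=\infty\}\cap\{\int_0^\infty \ddr s/R_s<\infty\}$ for the Ornstein--Uhlenbeck type process $R$; (ii) Shiga's criterion identifying $\mathcal{E}=\infty$ with recurrence of $R$; (iii) in the recurrent case, a lower bound on the clock by the total length of the i.i.d.\ excursions of $R$ below a fixed level, forcing divergence; (iv) in the transient case, the explicit computation of $\int_0^\infty\mathbb{E}_z[e^{-\theta R_s}]\,\ddr s$ from the OU Laplace transform, an integration in $\theta$ showing $\mathbb{E}_z[\int_0^\infty\frac{1-e^{-bR_s}}{R_s}\ddr s;\sigma_0=\infty]<\infty$ (this is exactly where $\mathcal{E}<\infty$ enters), and the conclusion via $R_s\to\infty$ that the clock is a.s.\ finite on $\{\sigma_0=\infty\}$. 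None of these steps appears in your proposal, so as written the proof is incomplete even along the route you correctly guess the paper takes.
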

\begin{remark} The integrals $\int_0 \frac{|\Psi(u)|}{u}\ddr u$ and $\int^{\infty} \log(u) \pi(\ddr u)$ have the same nature. In particular $\int^{\infty} \log(u) \pi(\ddr u)<\infty$ implies $\mathcal{E}=\infty$.
\end{remark}
The next theorems introduce extensions in $[0,\infty]$ of the minimal process. The usual convention $0\cdot\infty=\infty\cdot 0=0$ is taken. In particular, note that $e^{-0.z}=1$ for all $z\in [0,\infty]$.
\begin{theorem}[Infinity as entrance boundary]\label{entranceboundarytheo} Assume $\mathcal{E}=\infty$. 
The Markov process $(\zmin_t,t\geq 0)$ can be extended in $[0,\infty]$ to a Feller process $(Z_t,t\geq 0)$ with  $\infty$ as an entrance boundary. The boundary $0$ is an exit of the diffusion $(U_t,t\geq 0)$  solution to (\ref{GFD}), and the semi-group of $(Z_t,t\geq 0)$ satisfies for all $t\geq 0$, all $z\in [0,\infty]$, all $x\in [0,\infty)$ \[\mathbb{E}_z(e^{-xZ_t})=\mathbb{E}_x(e^{-zU_t}).\]
\end{theorem}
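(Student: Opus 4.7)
The plan is to assemble three pieces: a Feller boundary classification for the diffusion $(U_t,t\ge 0)$, the semi-group duality $(\star)$ derived from the generator-level duality (Lemma \ref{generatordualitylemma}), and a construction of the Feller extension at $\infty$ via this duality.

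First, I would read Theorem \ref{explosion} through the lens of $(U_t)$. The generator of $(U_t)$ is $\mathscr A f(x)=\tfrac{cx}{2}f''(x)-\Psi(x)f'(x)$, whose scale derivative is $s'(x)=\exp\bigl(-\int_x^\theta\tfrac{2\Psi(u)}{cu}\,\ddr u\bigr)$ and whose speed density is $\tfrac{2}{cx\,s'(x)}$. Thus $\mathcal E$ is, up to the constant $c/2$, the speed mass of a left neighbourhood of $0$, and Feller's test (carried out in the Appendix) reduces $\mathcal E=\infty$ to the accessibility of $0$ for $(U_t)$. Since the diffusion coefficient vanishes at $0$ and the drift there equals $-\Psi(0)=\lambda\ge 0$, the minimal solution of $(\ref{GFD})$ cannot leave $0$ once reached, so $0$ is an exit.

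For the duality $(\star)$, I would start from the pointwise identity $\mathscr L_z e^{-xz}=\mathscr A_x e^{-xz}$ promised by Lemma \ref{generatordualitylemma}; it is verified directly from $(\ref{cumulantode})$, which gives $\mathscr G_z e^{-xz}=z\Psi(x)e^{-xz}$, together with the quadratic-drift contribution $\tfrac{cxz^2}{2}e^{-xz}=\tfrac{cx}{2}\partial_x^2 e^{-xz}$. Realising $(\zmin_t)$ and $(U_t)$ on independent probability spaces, for fixed $t>0$ I would consider
\[
g(s):=\mathbb{E}_z\otimes\mathbb{E}_x\bigl[e^{-U_{t-s}\,\zmin_s}\bigr],\qquad s\in[0,t].
\]
A Dynkin/Ethier--Kurtz computation applied to the forward evolution of $\zmin$ and the backward evolution of $U$ yields $g'(s)\equiv 0$ by the pointwise identity, hence $g(0)=g(t)$, which is precisely $(\star)$ for $z,x\in(0,\infty)$. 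The crucial input is Theorem \ref{explosion}: since $\mathcal E=\infty$, $\zmin$ does not explode, so trajectories stay in $[0,\infty)$ and the bounded function $e^{-xz}$ together with its derivatives remain controlled in all compensators. Monotone convergence then extends $(\star)$ to $z\in[0,\infty)$ and $x\in[0,\infty)$.

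Finally, I would build the extension. Because $0$ is an exit for $(U_t)$, the map $z\mapsto\mathbb{E}_x[e^{-zU_t}]$ has a well-defined pointwise limit $\mathbb{P}_x(U_t=0)\in(0,1]$ as $z\uparrow\infty$. I set $\mathbb{E}_\infty[e^{-xZ_t}]:=\mathbb{P}_x(U_t=0)$ and, using that $\{f_x:x\ge 0\}$ with $f_x(z)=e^{-xz}$ spans a uniformly dense subalgebra of $C([0,\infty])$ (Stone--Weierstrass, since it contains constants and separates points of the compactification), extend by linearity and density to a semi-group $(P_t)$ on $C([0,\infty])$. Joint continuity and strong continuity of $(P_t)$ then follow from the Feller property of $(U_t)$ on $[0,\infty)$ and the continuity of the Laplace transform. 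For the entrance property, Theorem \ref{explosion} already provides inaccessibility of $\infty$ from $(0,\infty)$, while monotone convergence yields
\[
\mathbb{P}_\infty(Z_t<\infty)=\lim_{x\downarrow 0}\mathbb{E}_\infty[e^{-xZ_t}]=\lim_{x\downarrow 0}\mathbb{P}_x(U_t=0)=1,
\]
the last limit because paths of $(U_t)$ started arbitrarily close to the exit boundary $0$ are absorbed at $0$ before time $t$ with probability tending to one. The main obstacle I anticipate is the semi-group duality step: the unbounded positive jumps of $\zmin$ must be carefully handled in Dynkin's formula, and it is exactly the non-explosion produced by $\mathcal E=\infty$ that keeps all the relevant compensators integrable and justifies the vanishing of $g'$.
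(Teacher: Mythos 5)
Your overall strategy is the same as the paper's: classify the boundary $0$ of $(U_t,t\geq 0)$ via Feller's test (the speed-measure mass of a left neighbourhood of $0$ is $\mathcal{E}/c$), prove the semigroup duality from the generator identity of Lemma \ref{generatordualitylemma} by an Ethier--Kurtz argument, and extend the semigroup to $[0,\infty]$ by setting $P_te_x(\infty):=\mathbb{P}_x(U_t=0)$ and invoking Stone--Weierstrass; your last two steps are essentially Lemmas \ref{entranceLawAtInfinityLemma} and \ref{Zextendedentrance} of the paper, including the identification of the entrance law $\eta_t$ and the limit $\eta_t\to\delta_\infty$.

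The gap is in the duality step, which is the technical heart of the proof and which you only assert. The claim $g'(s)\equiv 0$ for $g(s)=\mathbb{E}\bigl[e^{-U_{t-s}\zmin_s}\bigr]$ does not follow directly from the pointwise generator identity: the Ethier--Kurtz scheme requires integrability of $\sup_{s,t\le T}\lvert h(\zmin_t,U_s)\rvert$ with $h(z,u)=\Psi(u)ze^{-uz}+\tfrac{c}{2}uz^2e^{-uz}$, and $\sup_{z}\tfrac{c}{2}uz^2e^{-uz}$ is of order $1/u$ as $u\downarrow 0$, while $U$ reaches $0$ before time $t$ with positive probability precisely because $0$ is an exit. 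Your diagnosis that non-explosion of $\zmin$ is what ``keeps the compensators controlled'' misplaces both the difficulty and the use of non-explosion. The paper localizes $U$ at $\tau_\epsilon=\inf\{t\geq 0: U_t\le\epsilon\}$; the Ethier--Kurtz identity then yields a nonzero remainder $\mathbb{E}\bigl[\int_0^{t-\tau_\epsilon\wedge t}\mathscr{L}e_\epsilon(\zmin_s)\,\ddr s\bigr]=e^{-\epsilon z}-\mathbb{E}_z\bigl[e^{-\epsilon\zmin_{t-t\wedge\tau_\epsilon}}\bigr]$, and it is in letting $\epsilon\to 0$ in this remainder that non-explosion enters, giving the limit $1-\mathbb{P}_z(\zmin_{t-t\wedge\tau_0}<\infty)=0$. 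Separately, the integrability condition (4.50) of Ethier--Kurtz for the localized pair is not automatic and is verified in the paper by dominating $U_{\cdot\wedge\tau_\epsilon}$ by a supercritical Feller diffusion via the comparison theorem and applying Doob's inequality. Without the localization, the analysis of the remainder, and the verification of (4.50), the identity $g(0)=g(t)$ is not established.
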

For any L\'evy measure $\pi$ and any $x\geq 0$, set $\bar{\pi}(x):=\pi([x,\infty))$. Given $\Psi$ of the form (\ref{Levykhintchine}) and $k\geq 1$, define a branching mechanism $\Psi_{k}$ by
$$\Psi_{k}(z):=\frac{\sigma^2}{2}z^2+\gamma z+\int_{0}^{\infty}\left(e^{-zx}-1+zx\mathbbm{1}_{x\in (0,1)}\right)\pi_{k}(\ddr x),\text{ with }\pi_k=\pi_{|]0,k[}+(\bar{\pi}(k)+\lambda)\delta_k.$$
Plainly $|\Psi_k'(0+)|<\infty$, for any $k\geq 1$, and by Theorem \ref{explosion}, the minimal logistic CSBP with mechanism $\Psi_k$ does not explode. Call $(Z_t^{(k)}, t\geq 0)$ the c\`adl\`ag logistic CSBP, provided by Theorem \ref{entranceboundarytheo}, with mechanism $\Psi_{k}$ and $\infty$ as entrance boundary.
\begin{theorem}[Infinity as regular reflecting boundary]\label{regularboundarytheo} Assume $\mathcal{E}<\infty$ and  $0\leq \frac{2\lambda}{c}<1$. 
The processes $(Z_t^{(k)},t\geq 0)$ converges weakly in $\mathcal{D}([0,\infty])$
towards a Feller process $(Z_t, t\geq 0)$, extending $(\zmin_t,t\geq 0)$, with $\infty$ regular instantaneous reflecting.
The semi-group of $(Z_t, t\geq 0)$ satisfies for all $t\geq 0$, all $z\in [0,\infty]$ and $x\in [0,\infty)$,
\[\mathbb{E}_{z}(e^{-xZ_t})=\mathbb{E}_x(e^{-zU^{0}_t})\]
where $(U^{0}_t, t\geq 0)$ is solution to (\ref{GFD}) with $0$ regular absorbing. 
\end{theorem}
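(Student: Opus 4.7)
The plan is to extract the reflecting extension as a limit of the non-explosive extensions $Z^{(k)}$ given by Theorem \ref{entranceboundarytheo}, and to read off the boundary behavior of the limit at $\infty$ from the corresponding behavior of $U^{0}$ at $0$.

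First I would record what is known for each $k$: since $\Psi_{k}$ has bounded L\'evy measure and no killing term, $\mathcal{E}_{k}=\infty$ (by the remark after Theorem \ref{explosion}, as $\int^{\infty}\log u\,\pi_{k}(\ddr u)<\infty$), so Theorem \ref{entranceboundarytheo} supplies a Feller process $(Z^{(k)}_{t})$ with $\infty$ entrance, dual to the $\Psi_{k}$-generalized Feller diffusion $U^{(k)}$ with $0$ exit, through
\[\mathbb{E}_{z}(e^{-xZ^{(k)}_{t}})=\mathbb{E}_{x}(e^{-zU^{(k)}_{t}}).\]
Since $\Psi_{k}\to\Psi$ locally uniformly on $(0,\infty)$ while the diffusion coefficient is unchanged, pathwise uniqueness for (\ref{GFD}) on $(0,\infty)$ yields locally uniform convergence $U^{(k)}\to U$ before the hitting time of $0$. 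Under $2\lambda/c<1$, the Feller test carried out in the Appendix ensures that $0$ is regular for $U$; imposing the absorbing condition at $0$ singles out $U^{0}$, and since each $U^{(k)}$ is absorbed at $0$ upon reaching it, the convergence extends to $U^{(k)}\to U^{0}$ for all times.

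Next I would pass to the limit in the duality. For each $x,z,t$, dominated convergence gives $\mathbb{E}_{x}(e^{-zU^{(k)}_{t}})\to\mathbb{E}_{x}(e^{-zU^{0}_{t}})$, so $\mathbb{E}_{z}(e^{-xZ^{(k)}_{t}})$ converges to the same limit, identifying the Laplace transform of a candidate kernel $P_{t}(z,\cdot)$ on $[0,\infty]$. I would extend the formula to $z=\infty$ using the convention $e^{-x\cdot\infty}=0$ for $x>0$. The Feller property then amounts to: continuity of $z\mapsto\mathbb{E}_{x}(e^{-zU^{0}_{t}})$ on the compact interval $[0,\infty]$, immediate by dominated convergence with the value at $\infty$ being $\mathbb{P}_{x}(U^{0}_{t}=0)$; strong continuity at $t=0$ from $U^{0}_{t}\to x$ a.s.\ under $\mathbb{P}_{x}$; and the semi-group identity inherited from the strong Markov property of $U^{0}$. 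For the convergence in $\mathcal{D}([0,\infty])$, tightness is handled by the compactness of the state space together with a standard jump-size control from the martingale problem \textbf{(MP)}, while finite-dimensional convergence is exactly the Laplace convergence above.

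The classification of $\infty$ then falls out of the duality: letting $z\to\infty$ yields $\mathbb{E}_{\infty}(e^{-xZ_{t}})=\mathbb{P}_{x}(U^{0}_{t}=0)$ for all $x,t\geq 0$. Since $0$ is regular absorbing for $U^{0}$, the hitting time of $0$ from $x$ tends to $0$ as $x\downarrow 0$, so $\mathbb{P}_{x}(U^{0}_{t}=0)\to 1$ for every $t>0$. Taking $x\downarrow 0$ gives $\mathbb{P}_{\infty}(Z_{t}<\infty)=1$, and Fubini yields $\mathbb{E}_{\infty}\!\left[\int_{0}^{t}\mathbbm{1}_{\{Z_{s}=\infty\}}\,\ddr s\right]=0$, i.e.\ $\infty$ is instantaneous reflecting; combined with accessibility from the interior given by $\mathcal{E}<\infty$ and Theorem \ref{explosion}, this makes $\infty$ regular. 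The most delicate step I anticipate is the boundary analysis in $U^{(k)}\to U^{0}$: each approximation is absorbed at $0$ through a dimension-$0$ Bessel-like mechanism, whereas $U^{0}$ reaches $0$ via a genuine regular boundary, so one must verify, using the scale and speed of $\Psi_{k}$ versus $\Psi$ near $0$ and a monotonicity argument in $k$, that the limiting boundary condition is genuinely absorbing, not inadvertently reflecting or sticky.
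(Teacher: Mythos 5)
Your overall strategy is the one the paper follows: approximate $\Psi$ by the truncated mechanisms $\Psi_k$, use Theorem \ref{entranceboundarytheo} for each $k$, show $U^{(k)}\to U^{0}$, pass to the limit in the duality, and read the boundary behavior of $Z$ at $\infty$ off the behavior of $U^{0}$ at $0$. Your identification of the delicate point in $U^{(k)}\to U^{0}$ is also on target, and the fix you sketch (monotonicity in $k$) is essentially the paper's argument: by comparison $U^{(k)}\leq U^{(k+1)}$, so $\tau^{k}\leq\tau^{\infty}$, and since each $U^{(k)}$ has $0$ as an exit it is stuck at $0$ from time $\tau^{k}$ on, forcing the pointwise limit to be absorbed at $0$ as well; no scale/speed comparison near $0$ is needed.

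There is, however, a genuine gap: you never prove that the limit $(Z_t,t\geq 0)$ extends $(\zmin_t,t\geq 0)$, and this is both part of the statement and the only route to accessibility of $\infty$. The duality formula alone cannot give accessibility: letting $x\downarrow 0$ in $\mathbb{E}_z(e^{-xZ_t})=\mathbb{E}_x(e^{-zU^0_t})$ yields $\mathbb{P}_z(Z_t=\infty)=1-\mathbb{E}_{0+}(e^{-zU^0_t})=0$ for every fixed $t$, which is perfectly consistent with a process that never visits $\infty$ at all (the visiting set is Lebesgue-null either way). Your appeal to ``accessibility from the interior given by $\mathcal{E}<\infty$ and Theorem \ref{explosion}'' concerns the \emph{minimal} process; to transfer it to $Z$ you must show that $(Z_{t\wedge\zeta_\infty},t\geq 0)$ has the law of $(\zmin_t,t\geq 0)$. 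The paper does this (Lemma \ref{extension}) by proving uniform convergence $\|\mathscr{L}^{(k)}f-\mathscr{L}f\|_\infty\to 0$ for $f\in C^2_c((0,\infty))$, deducing that $Z$ solves the martingale problem \textbf{(MP)}, and invoking well-posedness (Lemma \ref{wellposed}). Separately, your tightness step for convergence in $\mathcal{D}([0,\infty])$ is under-specified (``a standard jump-size control from \textbf{(MP)}'' is not automatic here, as the jumps of $Z^{(k)}$ are unbounded); the paper sidesteps tightness entirely by upgrading the pointwise convergence of $\mathbb{E}_x(e^{-zU^{(k)}_t})$ to convergence \emph{uniform in} $z\in[0,\infty]$ (controlling the maximizer $z_k$ of $z\mapsto e^{-zU^{(k)}_t}-e^{-zU^{(\infty)}_t}$ and the event $\{\tau^k\leq t<\tau^\infty\}$) and then applying Ethier--Kurtz's semigroup convergence theorem. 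On the other hand, your derivation of the semigroup property of the limit from the Markov property of $U^0$ through the duality is a legitimate shortcut compared with the paper's use of uniform convergence for that purpose.
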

\begin{theorem}[Infinity as exit boundary]\label{exitboundarytheo} Assume $\frac{2\lambda}{c}\geq 1$ (so that $\mathcal{E}<\infty$). The processes $(Z_t^{(k)},t\geq 0)$ converges weakly in $\mathcal{D}([0,\infty])$ towards a Feller process $(Z_t, t\geq 0)$, extending $(\zmin_t,t\geq 0)$, with $\infty$ an exit. The boundary $0$ is an entrance of the diffusion $(U_t,t\geq 0)$  solution to (\ref{GFD}) and the semi-group of $(Z_t, t\geq 0)$ satisfies for all $t\geq 0$, all $z\in [0,\infty]$ and $x\in (0,\infty)$,
\[\mathbb{E}_{z}(e^{-xZ_t})=\mathbb{E}_x(e^{-zU_t}).\]
\end{theorem}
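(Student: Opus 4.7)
The plan is to run the same approximation scheme as in Theorem \ref{regularboundarytheo}, using the dualities for the truncated mechanisms $\Psi_k$ (all of which satisfy $\mathcal{E}_k=\infty$ by the remark following Theorem \ref{explosion}, since $\pi_k$ has bounded support), and to read off the new boundary behaviour of $\infty$ for the limit $(Z_t)$ from the transition, as $k\to\infty$, of the dual boundary at $0$ from \emph{exit} to \emph{entrance}. Concretely, Theorem \ref{entranceboundarytheo} applied to $\Psi_k$ provides Feller extensions $(Z^{(k)}_t,t\geq 0)$ with $\infty$ entrance together with
\[
\mathbb{E}_z\bigl(e^{-xZ^{(k)}_t}\bigr)=\mathbb{E}_x\bigl(e^{-zU^{(k)}_t}\bigr),\qquad z\in[0,\infty],\ x\in[0,\infty),
\]
where $(U^{(k)}_t)$ solves (\ref{GFD}) with $\Psi$ replaced by $\Psi_k$ and has $0$ as an exit.

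Since $\Psi_k\downarrow\Psi$ pointwise on $[0,\infty)$ and locally uniformly on $(0,\infty)$, a pathwise comparison of the strong solutions of (\ref{GFD}) driven by a common Brownian motion yields $U^{(k)}_t\uparrow U_t$ almost surely on compact time sets, together with the upward convergence of the hitting time of $0$ by $U^{(k)}$ to that of $U$. Under the standing hypothesis $2\lambda/c\geq 1$, the Appendix computation of Feller's test shows that $0$ is an entrance for the limit $(U_t)$, so $U_t>0$ almost surely for every $t>0$ and every initial state $x\geq 0$. In particular $\mathbb{P}_x(U^{(k)}_t=0)\downarrow 0$ for every $x>0$, and dominated convergence in the approximating duality gives
\[
\mathbb{E}_z\bigl(e^{-xZ^{(k)}_t}\bigr)\xrightarrow[k\to\infty]{}\mathbb{E}_x\bigl(e^{-zU_t}\bigr)=:\varphi_t(z,x),\qquad z\in[0,\infty],\ x\in(0,\infty).
\]
Chapman--Kolmogorov for $\varphi_t$ follows from the Markov property of $(U_t)$ and injectivity of Laplace transforms, while the map $z\mapsto\varphi_t(z,x)$ is continuous on $[0,\infty]$ (with $\varphi_t(\infty,x)=\mathbb{P}_x(U_t=0)=0$). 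Hence $\varphi_t$ is the Laplace transform of a Feller semigroup $P_t$ on $[0,\infty]$ satisfying the announced dual identity.

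The boundary classification of $\infty$ is now immediate: for every $x>0$ and $t>0$, $\mathbb{E}_\infty(e^{-xZ_t})=\mathbb{P}_x(U_t=0)=0$, forcing $Z_t=\infty$ $\mathbb{P}_\infty$-almost surely, so $\infty$ is absorbing; combined with accessibility granted by Theorem \ref{explosion} (since $\mathcal{E}<\infty$), this shows that $\infty$ is an exit. Finite-dimensional convergence $Z^{(k)}\to Z$ follows from the Laplace convergence above via the Markov property; tightness of $(Z^{(k)})$ in $\mathcal{D}([0,\infty])$ is then obtained from compactness of the state space together with an Aldous-type estimate derived from the martingale problem (\ref{MP}) for $\Psi_k$, whose generator converges to $\mathscr{L}$ on the core $C^2_c((0,\infty))$.

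The technical core of the argument is the simultaneous passage to the limit at the dual boundary $0$, where each $U^{(k)}$ is absorbed while $U$ is not, and at the primal boundary $\infty$, where each $Z^{(k)}$ never arrives while the limit $Z$ reaches it in finite time with positive probability. The main obstacle is to quantify the rate at which $\mathbb{P}_x(U^{(k)}_t=0)\downarrow 0$ via the Feller test from the Appendix, and to convert this into uniform-in-$k$ oscillation bounds on the approximating processes $(Z^{(k)})$ near $\infty$ sharp enough to verify Aldous's criterion on the compactified state space $[0,\infty]$.
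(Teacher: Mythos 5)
Your overall strategy matches the paper's: approximate by the truncated mechanisms $\Psi_k$, invoke the duality of Theorem \ref{entranceboundarytheo} for each $k$, pass to the monotone limit $U^{(k)}_t\uparrow U_t$ of the dual diffusions, and read off the exit nature of $\infty$ from $\mathbb{E}_\infty(e^{-xZ_t})=\mathbb{P}_x(U_t=0)=0$. However, there is a genuine gap at the step you yourself flag as the ``main obstacle'': you propose to obtain weak convergence in $\mathcal{D}([0,\infty])$ from finite-dimensional convergence plus an Aldous-type tightness estimate ``derived from the martingale problem'', but you never produce such an estimate, and it is not clear one can be extracted this way: the martingale problem only controls $f(Z^{(k)}_t)$ for $f\in C^2_c((0,\infty))$, which says nothing about oscillations near the boundary $\infty$ --- precisely where the approximating processes $Z^{(k)}$ wander off to ever larger values while the limit actually reaches $\infty$. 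The paper circumvents tightness entirely: in Lemmas \ref{weakconv} and \ref{weakconvexit} it upgrades the pointwise convergence $P^{(k)}_te_x(z)\to P_te_x(z)$ to \emph{uniform} convergence in $z\in[0,\infty]$ (by explicitly maximizing $z\mapsto e^{-zU^{(k)}_t}-e^{-zU^{(\infty)}_t}$ at $z_k=(\log U^{(\infty)}_t-\log U^{(k)}_t)/(U^{(\infty)}_t-U^{(k)}_t)$ and using $\mathbb{P}_x(\tau^k\le t)\to 0$), then applies Stone--Weierstrass and the Ethier--Kurtz theorem on convergence of Feller semigroups, which yields weak convergence of the processes on the compact state space with no separate tightness argument. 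This uniform-convergence device is the key idea your proposal is missing; your alternative derivation of the Chapman--Kolmogorov identity from the Markov property of $U$ and injectivity of Laplace transforms is fine, but it does not substitute for the process-level convergence.

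A second, smaller gap: to conclude that $\infty$ is an \emph{exit} (accessible and absorbing) rather than natural, you invoke accessibility ``granted by Theorem \ref{explosion}'', but that theorem concerns the minimal process; you must first identify $(Z_{t\wedge\zeta_\infty},t\geq 0)$ in law with $(\zmin_t,t\geq 0)$. The paper does this in Lemma \ref{extension} via uniform convergence of the generators $\mathscr{L}^{(k)}\to\mathscr{L}$ on $C^2_c((0,\infty))$, passage to the limit in the martingale problem, and the well-posedness of \textbf{(MP)} from Lemma \ref{wellposed}. Your proposal gestures at the generator convergence but does not carry out this identification.
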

\begin{figure}[h!]
\centering \noindent
\includegraphics[height=.16 \textheight]{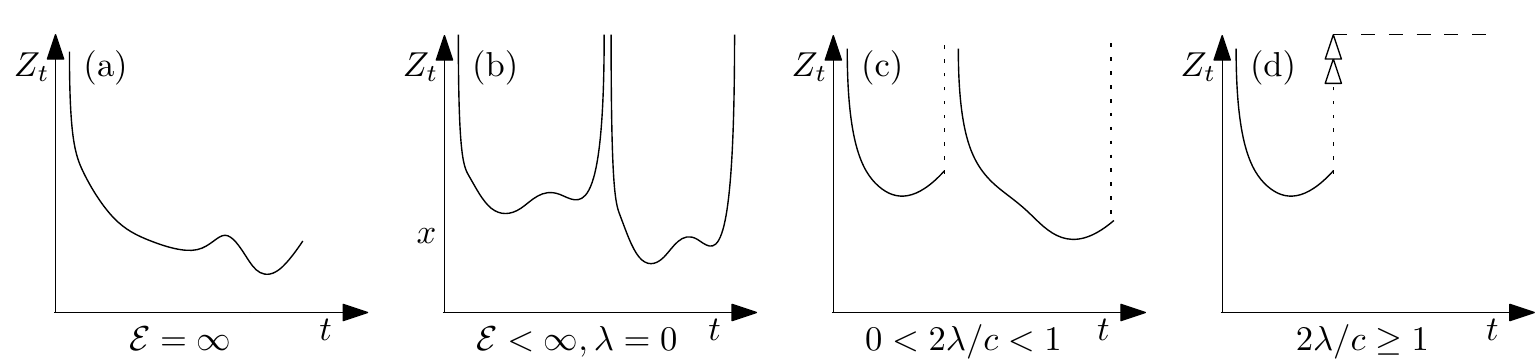}
\caption{Symbolic representation of the four behaviors at $\infty$.}
\label{behaviorcbc}
\end{figure}
\begin{corollary}[Zero as exit or natural boundary]\label{zeroboundary}\
\begin{itemize}
\item[i)] Assume $\int^{\infty}\frac{\ddr z}{|\Psi(z)|}<\infty$ then $0$ is an exit boundary of $(Z_t,t\geq 0)$ and $\infty$ is an entrance boundary of $(U_t,t\geq 0)$. 
\item[ii)] Assume $\int^{\infty}\frac{\ddr z}{|\Psi(z)|}=\infty$ then $0$ is a natural boundary of $(Z_t,t\geq 0)$ and $\infty$ is a natural boundary of $(U_t,t\geq 0)$.
\end{itemize}
\end{corollary}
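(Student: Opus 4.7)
The proof reduces to two boundary classifications linked by the duality $(\star)$ from Theorems 3.2, 3.3 and 3.4: that of $Z$ at $0$ and of $U$ at $\infty$. Since the martingale problem \textbf{(MP)} makes $0$ absorbing, and since the extensions constructed in Theorems 3.2--3.4 add no return mechanism there, the boundary $0$ of $Z$ is either exit (if accessible) or natural (if inaccessible). Likewise, the diffusion $U$ (resp.\ $U^{0}$) is defined only up to its first exit time from $(0,\infty)$, so $\infty$ is either entrance or natural for it. Hence only two possibilities remain on each side, and the task is to match them.

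The bridge is the limit $x\to\infty$ in
\[\mathbb{E}_z(e^{-xZ_t})=\mathbb{E}_x(e^{-zU_t}).\]
Dominated convergence on the left gives $\mathbb{P}_z(Z_t=0)=\lim_{x\to\infty}\mathbb{E}_x(e^{-zU_t})$. By monotone convergence (in $x$) the limit on the right exists and is nonnegative; it is strictly positive for some, equivalently all, $z,t>0$ if and only if a (sub-probability) entrance law from $\infty$ exists for $U$, that is, if and only if $\infty$ is an entrance. This yields the equivalence: $0$ is exit for $Z$ iff $\infty$ is entrance for $U$, and $0$ is natural for $Z$ iff $\infty$ is natural for $U$.

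It then remains to recognise the analytic criterion. I would apply Feller's test to $\ddr U_t=\sqrt{cU_t}\,\ddr B_t-\Psi(U_t)\,\ddr t$, whose scale derivative is $s'(u)=\exp\!\bigl(\int^u\tfrac{2\Psi(v)}{cv}\,\ddr v\bigr)$ and speed density $m(u)=2/(cu\,s'(u))$. After comparison of integrands and the change of variables already used in the appendix for the boundary at $\infty$ of $U$, the relevant Feller integral at $\infty$ converges if and only if $\int^{\infty}\ddr z/|\Psi(z)|<\infty$. Combining this with the equivalence above yields (i) and (ii).

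The main obstacle is making the correspondence at $\infty$ of $U$ insensitive to the regime of $(\star)$: in Theorem 3.3 the duality uses $U^{0}$, and in Theorem 3.4 the $U$ with $0$ an entrance. However, all three versions of the diffusion agree in law up to the first hitting time of $0$, so their scale and speed near $\infty$ are identical and the bridge above transfers unchanged. A secondary point, used implicitly, is that when $\infty$ is natural the monotone limit $\lim_{x\to\infty}\mathbb{E}_x(e^{-zU_t})$ vanishes; this follows either by reading off $s(\infty)=\infty$ and the Feller classification in the appendix, or by noting that a strictly positive limit would yield a Feller entrance law from $\infty$, contradicting naturality.
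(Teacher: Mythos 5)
Your proposal is correct and follows essentially the same route as the paper: the paper's proof (Lemma \ref{0access}) also lets $x\to\infty$ in the duality to identify $\mathbb{P}_z(\zeta_0\leq t)$ with $\mathbb{E}_\infty[e^{-zU_t}]$, and then invokes the Feller-test classification of the boundary $\infty$ of $U$ (Lemma \ref{entranceU}-2, proved in the Appendix) to get the criterion $\int^{\infty}\ddr z/|\Psi(z)|<\infty$. Your additional remark that the classification at $\infty$ of $U$ is unaffected by the choice of boundary condition at $0$ (so the argument transfers verbatim to the $U^{0}$ and entrance regimes) is exactly how the paper extends the argument to the case $\mathcal{E}<\infty$ at the end of Section \ref{regularsection}.
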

The boundary behaviors found in Theorems \ref{entranceboundarytheo}, \ref{regularboundarytheo} and \ref{exitboundarytheo} and Corollary \ref{zeroboundary} can be summarized as follows
\begin{table}[htpb]
\begin{center}
\begin{tabular}{c|l}
Boundary of $Z$ &  Boundary  of $U$ \\
\hline
$\infty$  entrance  &  $0$ exit \\
\hline
$\infty$  regular reflecting  &  $0$ regular absorbing\\
\hline
$\infty$  exit & $0$ entrance\\ 
\hline
$0$  exit & $\infty$ entrance\\ 
\hline
$0$ natural & $\infty$ natural\\
\hline
\end{tabular}
\vspace*{4mm}
\caption{Boundaries of $Z$ and boundaries of $U$}
\label{correspondance}
\end{center}
\end{table}
\vspace*{-0.5cm}
\begin{corollary}[Stationarity]\label{StationarityZ}
Assume $\Psi(z)<0$ for all $z>0$, then $-\Psi$ is the Laplace exponent of a subordinator and takes the form
\begin{equation*} \Psi(z)=-\lambda-\delta z-\int_{0}^{\infty}(1-e^{-zu})\pi(\ddr u)
\end{equation*}
with $\lambda\geq 0$, $\delta\geq 0$ and $\int_{0}^{\infty}(1\wedge u)\pi(\ddr u)<\infty$. Assume $0\leq\frac{2\lambda}{c}<1$ and
define the condition \textbf{(A)} as follows 
\[\textbf{(A)}: (\delta=0 \text{ and } \bar{\pi}(0)+\lambda\leq c/2).\]
\begin{itemize}
\item[-] If $\textbf{(A)}$ is satisfied then  $(Z_t, t\geq 0)$ converges in probability to $0$.
\item[-] If $\textbf{(A)}$ is not satisfied then $(Z_t,t\geq 0)$ converges in law towards the distribution carried over $(\frac{2\delta}{c},\infty)$ whose Laplace transform is 
$$x\in \mathbb{R}_+\mapsto \mathbb{E}[e^{-xZ_\infty}]:=\frac{\int_{x}^{\infty}\exp\left(\int_{\theta}^{y}\frac{2\Psi(z)}{cz}\ddr z\right)\ddr y}{\int_{0}^{\infty}\exp\left(\int_{\theta}^{y}\frac{2\Psi(z)}{cz}\ddr z\right)\ddr y}.$$
\end{itemize}
\end{corollary}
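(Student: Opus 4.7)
The plan is to pass to the limit $t\to\infty$ in the duality identity. Since $0\leq 2\lambda/c<1$, exactly one of Theorems \ref{regularboundarytheo} or \ref{entranceboundarytheo} applies (depending on whether $\mathcal{E}<\infty$ or $\mathcal{E}=\infty$), and both yield
\[\mathbb{E}_z(e^{-xZ_t})=\mathbb{E}_x(e^{-zU_t})\]
for the generalized Feller diffusion $(U_t,t\geq 0)$ defined by (\ref{GFD}), for which $0$ is absorbing (either as exit or as regular absorbing boundary). The long-time behaviour of $Z$ is therefore controlled by the long-time behaviour of the dual diffusion $U$.

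In the subordinator case the drift of $U$ equals $-\Psi\geq 0$, and its scale derivative is
\[S'(y)=\exp\Bigl(\int_\theta^y\frac{2\Psi(z)}{cz}\,\ddr z\Bigr),\]
precisely the integrand appearing in the statement. Near $0$, $\Psi\sim -\lambda$ gives $S'(y)\asymp y^{-2\lambda/c}$, integrable under $2\lambda/c<1$, so I normalise $S(0)=0$. For the behaviour at infinity, recall that $-\Psi(y)\to\lambda+\bar\pi(0)$ when $\delta=0$ and $\bar\pi(0)<\infty$, tends to $\infty$ when $\bar\pi(0)=\infty$, and is $\sim\delta y$ when $\delta>0$. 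This gives (i) under condition \textbf{(A)}, $S'(y)$ is asymptotic to $y^{-2(\lambda+\bar\pi(0))/c}$ with exponent $\leq 1$, so $S(\infty)=\infty$; (ii) under the negation of \textbf{(A)}, either polynomial decay with exponent strictly greater than $1$ (case $\delta=0$, $\lambda+\bar\pi(0)>c/2$) or exponential decay of rate $2\delta/c$ (case $\delta>0$), so $S(\infty)<\infty$.

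The conclusion now splits by the standard scale-function dichotomy. Under \textbf{(A)}, $S(\infty)=\infty$ together with $0$ accessible forces $U_t$ to be absorbed at $0$ almost surely; bounded convergence in the duality yields $\mathbb{E}_z(e^{-xZ_t})\to 1$ for every $x,z\geq 0$, which is equivalent to $Z_t\to 0$ in probability. Under the negation of \textbf{(A)}, $S(\infty)<\infty$ gives
\[\mathbb{P}_x(U_t\to 0)=\frac{S(\infty)-S(x)}{S(\infty)}=\frac{\int_x^\infty S'(y)\,\ddr y}{\int_0^\infty S'(y)\,\ddr y},\]
while on the complementary event $U_t\to\infty$ almost surely, since $\infty$ is unattainable for $U$ (it is natural or entrance). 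Bounded convergence in the duality then identifies $\lim_{t\to\infty}\mathbb{E}_z(e^{-xZ_t})$ with the right-hand side, which is independent of $z$, proving convergence in law of $Z_t$ to a distribution with the stated Laplace transform. The support assertion $(\tfrac{2\delta}{c},\infty)$ follows from the exponential decay rate $e^{-2\delta x/c}$ of this Laplace transform as $x\to\infty$ (produced by the linear component $\delta y$ in $-\Psi$), identifying $2\delta/c$ as the infimum of the support, while the polynomial correction ensures that no atom is charged at that point.

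The most delicate point is the asymptotic analysis of $S'$ at infinity when $\bar\pi(0)=\infty$: here $-\Psi$ grows to $\infty$ sublinearly, without a clean polynomial asymptotic, and one must instead use $-\Psi(y)\geq M$ for $y$ large enough and any preassigned $M$, giving $S'(y)\leq C y^{-2M/c}$ and hence $S(\infty)<\infty$ upon choosing $M>c/2$, consistently with $\lambda+\bar\pi(0)=\infty>c/2$. The auxiliary fact that on $\{T_0=\infty\}$ one has $U_t\to\infty$ almost surely is standard for one-dimensional diffusions with unattainable upper boundary and is obtained from the martingale property of $S(U_{t\wedge T_n})$ at the first hitting times $T_n$ of increasing levels, together with $S(\infty)<\infty$.
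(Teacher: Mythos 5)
Your proposal follows essentially the same route as the paper: pass to the limit $t\to\infty$ in the duality, reduce the question to the exit law of the dual diffusion $U$ through its scale function, and show that $s(\infty)=\infty$ exactly under condition \textbf{(A)}. This is precisely the content of Lemma \ref{stationarityandalmostsureabsorptionEinfini} combined with Lemma \ref{stationarylawlemma}, whose proof (Lemma \ref{nondegenerate} in the Appendix) carries out the scale-function dichotomy you describe.

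Two remarks. First, your treatment of the case $\delta=0$, $\bar{\pi}(0)<\infty$ is too coarse at the borderline $\lambda+\bar{\pi}(0)=c/2$: saying that $S'(y)$ is ``asymptotic to $y^{-2(\lambda+\bar{\pi}(0))/c}$ with exponent $\leq 1$'' only pins down $\log S'(y)\sim -\ln y$, which is compatible with integrable functions such as $y^{-1}(\ln y)^{-2}$. To conclude $S(\infty)=\infty$ there you need a genuine lower bound $S'(y)\geq C y^{-2(\lambda+\bar{\pi}(0))/c}$, i.e. control of the error term $-\int_\theta^y\frac{2\Psi(z)}{cz}\,\ddr z-\frac{2}{c}(\lambda+\bar{\pi}(0))\ln y$ up to an additive constant; this is exactly what the paper's computation of the finite limit $k_\theta(x)$ in (\ref{ktheta}) provides, and the symmetric upper bound (with $\bar{\pi}(x)e^{-\theta x}$ in place of $\bar{\pi}(0)$, $x$ small) is what handles the strict inequality $\lambda+\bar{\pi}(0)>c/2$. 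Your handling of $\delta>0$ and of $\bar{\pi}(0)=\infty$ is fine and matches the paper. Second, for the support you argue analytically from the exponential decay rate $e^{-2\delta x/c}$ of the limiting Laplace transform, whereas the paper invokes irreducibility of the minimal process in $(\frac{2\delta}{c},\infty)$, inherited from the Ornstein--Uhlenbeck time-change (Remark after Lemma \ref{irreducibleclass}). Your version cleanly shows the limit law puts no mass on $[0,\frac{2\delta}{c}]$; the paper's version additionally identifies the full support. Both are acceptable for the statement as phrased.
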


\begin{remark}  The condition in Corollary \ref{StationarityZ} for the existence of a non-degenerate stationary distribution can be rephrased as follows. The condition \textbf{(A)} is not satisfied  if and only if at least one of the following holds
\begin{center}
 $\underset{u\rightarrow \infty}{\lim}\frac{\Psi(u)}{u}=-\delta\neq 0$, $\pi((0,1))=\infty$, $\bar{\pi}(0)+\lambda>\frac{c}{2}$.
\end{center}
This already appears with $\lambda=0$ and the log-moment assumption in \cite[Theorem 3.4]{MR2134113}. One can easily see from the Laplace transform that $\lambda=0$ and $\int^{\infty}\log (x) \pi(\ddr x)<\infty$ are necessary and sufficient conditions for the stationary distribution to admit a first moment. 
\end{remark}
\newpage
\begin{theorem}[Long-term behaviors] \label{longtermtheo}\
Consider $(Z_t,t\geq 0)$ the process started from $z\in (0,\infty)$.
\begin{itemize}
\item[1)] If $0\leq \frac{2\lambda}{c}<1$ and $\Psi(z)\geq 0$ for some $z>0$  then
\begin{itemize}
\item[1-1)] If $\int^{\infty}\frac{\ddr u}{\Psi(u)}=\infty$, then $Z_t>0$ for any $t\geq 0$ a.s. and $Z_t\underset{t\rightarrow \infty}{\longrightarrow} 0$ a.s.
\item[1-2)] If $\int^{\infty}\frac{\ddr u}{\Psi(u)}<\infty$, then $(Z_t,t\geq 0)$ will be absorbed at $0$ in finite time almost-surely.
\end{itemize}
\item[2)] If $\frac{2\lambda}{c}\geq 1$ and $\Psi(z)<0$ for all $z>0$ then $(Z_t,t\geq 0)$ will be absorbed at $\infty$ in finite time almost-surely.
\item[3)] If $\frac{2\lambda}{c}\geq 1$ and $\Psi(z)\geq 0$ for some $z>0$ then
$$\mathbb{P}_z(Z_t\underset{t\rightarrow \infty}{\longrightarrow} 0)=1-\mathbb{P}_z(\zeta_\infty<\infty)=\frac{\int_{0}^{\infty}e^{-zu}\frac{1}{u}\exp\left(-\int_{\theta}^{u}\frac{2\Psi(v)}{cv}\ddr v\right)\ddr u}{\int_{0}^{\infty}\frac{1}{u}\exp\left(-\int_{\theta}^{u}\frac{2\Psi(v)}{cv}\ddr v\right)\ddr u}\in (0,1).$$
If $\int^{\infty}\frac{\ddr u}{\Psi(u)}=\infty$, then $Z_t>0$ for any $t\geq 0$ a.s and if $\int^{\infty}\frac{\ddr u}{\Psi(u)}<\infty$, then $\{Z_t\underset{t\rightarrow \infty}{\longrightarrow} 0\}=\{\zeta_0<\zeta_\infty\}$.
\end{itemize}
\end{theorem}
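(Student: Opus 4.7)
The plan is to use the duality $\mathbb{E}_z(e^{-xZ_t})=\mathbb{E}_x(e^{-zU_t})$ (with $U$ replaced by the absorbed version $U^0$ in the reflecting regime) established in Theorems \ref{entranceboundarytheo}, \ref{regularboundarytheo} and \ref{exitboundarytheo}, and to read off the long-time behavior of $Z$ from that of the one-dimensional diffusion $U$ solving (\ref{GFD}). The analysis of $U$ is standard Feller--McKean, based on the scale density
\[s'(y)=\exp\!\left(\int_\theta^y \frac{2\Psi(z)}{cz}\,\ddr z\right)\]
and the speed density $m(y)=2/(cy\,s'(y))$; observe that $m(u)\,\ddr u$ is, up to a constant, the measure appearing in the Case 3 formula.

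In Case 1 ($2\lambda/c<1$ and $\Psi(z)\geq 0$ for some $z>0$) the assumption $2\lambda/c<1$ together with $\Psi(0)=-\lambda$ yields $\int_0 s'(y)\,\ddr y<\infty$ so that $s(0)>-\infty$, while convexity of $\Psi$ and $\Psi(\infty)=+\infty$ force $s(\infty)=\infty$; since $0$ is an exit (or regular absorbing) boundary of $U$, this gives $U_t\to 0$ in finite time almost surely. In Case 2 ($2\lambda/c\geq 1$ and $-\Psi$ the Laplace exponent of a subordinator) the drift $-\Psi$ is strictly positive, and a scale-function computation yields $s(0)=-\infty$ and $s(\infty)<\infty$, so $U$ is transient to $\infty$ and $U_t\to\infty$ almost surely. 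In Case 3 ($2\lambda/c\geq 1$ and $\Psi(z)\geq 0$ for some $z$) both boundaries of $U$ are inaccessible and the speed measure $m(y)\,\ddr y$ is finite on $(0,\infty)$ (integrability at $0$ via $2\lambda/c\geq 1$, integrability at $\infty$ via convexity of $\Psi$ and $\Psi(\infty)=+\infty$), so $U$ is positive recurrent with stationary law $\mu(\ddr u)=m(u)\,\ddr u/\|m\|_1$.

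Passing to the limit $t\to\infty$ in the duality, bounded convergence yields $\mathbb{E}_z(e^{-xZ_t})\to 1$ in Case 1, $\mathbb{E}_z(e^{-xZ_t})\to 0$ in Case 2, and
\[\lim_{t\to\infty}\mathbb{E}_z(e^{-xZ_t})=\int_0^\infty e^{-zu}\,\mu(\ddr u)\]
in Case 3, which coincides with the ratio in the statement. To upgrade convergence in law to the stated almost-sure behavior I invoke the boundary classifications: in Case 1-2 (respectively Case 2), $0$ (respectively $\infty$) is an exit of $Z$ by Corollary \ref{zeroboundary} (respectively Theorem \ref{exitboundarytheo}), and since this boundary is absorbing for $Z$, the distributional limit $\delta_0$ (respectively $\delta_\infty$) forces finite-time absorption there. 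In Case 1-1, $0$ is natural so $Z_t>0$ for all $t\geq 0$, and the Feller property of $Z$ on the compact space $[0,\infty]$ together with the Markov property and the fact that $0$ is the only attractor lifts convergence in law to almost-sure convergence. In Case 3, the events $\{Z_t\to 0\}$ and $\{\zeta_\infty<\infty\}$ partition the sample space by the dichotomy $Z_\infty\in\{0,\infty\}$, and the Laplace transform limit identifies $\mathbb{P}_z(Z_t\to 0)$ with $\int_0^\infty e^{-zu}\mu(\ddr u)$; the residual claims $Z_t>0$ for all $t$ and $\{Z_t\to 0\}=\{\zeta_0<\zeta_\infty\}$ follow from Corollary \ref{zeroboundary} applied within Case 3 combined with the fact that $\infty$ is an exit.

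The main technical obstacle is the speed-measure analysis in Case 3, which requires the same quantitative estimates on $\int_\theta \frac{2\Psi(z)}{cz}\,\ddr z$ used in the Appendix to classify the boundaries of $U$. A secondary subtlety is the 0-1 dichotomy $Z_\infty\in\{0,\infty\}$ in Case 3 and the upgrade from distributional to almost-sure convergence in Case 1-1; both should follow from the Feller/Markov structure of the extended process together with the fact that $\{0\}$ and $\{\infty\}$ are absorbing for the relevant process.
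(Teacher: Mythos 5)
Your strategy---read everything off the dual diffusion $U$ via its scale and speed densities---is genuinely different from the paper's, which obtains all the almost-sure statements from the Lamperti time-change to the Ornstein--Uhlenbeck type process $R$ (recurrence/transience of $R$, the first-passage Laplace transform of Lemma \ref{hittingOU}, the comparison $R_t\geq r_t>0$ in the subordinator case, and a renewal argument over excursions below a level $a$ in the reflecting case). Your observation in Case 3 that the integrand $\frac{1}{u}\exp\bigl(-\int_\theta^u\frac{2\Psi(v)}{cv}\,\ddr v\bigr)$ is the speed density of $U$, so that the stated ratio is the Laplace transform of the stationary law of the positive-recurrent dual diffusion, is correct and gives an alternative derivation of that formula (the paper instead gets it by letting $\mu\to0$ in Patie's formula for $\mathbb{E}_z[e^{-\mu\sigma_0}]$). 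However, the duality only ever delivers convergence of Laplace transforms, i.e.\ convergence in distribution of $Z_t$, and every place where you upgrade this to an almost-sure or finite-time statement is asserted rather than proved. Concretely: in 1-2 you claim that convergence in law to $\delta_0$ plus $0$ absorbing ``forces finite-time absorption''; this implication is false in general ($\{0\}$ is not open, so portmanteau gives nothing about $\mathbb{P}_z(Z_t=0)$), and indeed Case 1-1 is exactly a situation where $Z_t\to0$ in law but $Z_t>0$ for all $t$. The paper instead writes $\mathbb{P}_z(\zeta_0\leq t)=\mathbb{E}_\infty(e^{-zU^0_t})\to1$ using that $\infty$ is an entrance of $U$ and Lemma \ref{stationarylawlemma}. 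In 1-1 the ``lift to almost-sure convergence by the Feller/Markov structure'' is the hardest step when $\mathcal{E}<\infty$ (the process can explode and reflect back), and the paper needs the independent-excursion argument with the events $E_n$ together with $\mathbb{P}_a(\sigma_0<\infty)>0$ from Lemma \ref{longtermminimal}; nothing in your sketch replaces that. In Case 3 the dichotomy $Z_\infty\in\{0,\infty\}$ a.s.\ is likewise asserted; the paper obtains it from the identification $\{Z_t\to0\}=\{\sigma_0<\infty\}$ and $\{\zeta_\infty<\infty\}=\{\sigma_0=\infty\}$ via the time change and transience of $R$.

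There is also a concrete error in Case 2: your claim that $s(\infty)<\infty$ always holds there fails in the boundary sub-case $\delta=0$, $\pi\equiv0$, $\lambda=c/2$ (i.e.\ $\Psi\equiv-c/2$), where $Q(x)=-\ln(x/\theta)$, hence $s(\infty)=\infty$ and $s(0)=-\infty$: the dual diffusion is then null recurrent, not transient, and ``$U_t\to\infty$ a.s.'' is false (one only has $\mathbb{E}_x(e^{-zU_t})\to0$, which still suffices for the conclusion but requires a different justification). More generally, by Lemma \ref{nondegenerate} the condition $s(\infty)<\infty$ is equivalent to the failure of \textbf{(A)}, which is not automatic under $\frac{2\lambda}{c}\geq1$ alone. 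The paper avoids all of this by proving Case 2 directly on the $R$-side: $R_t\geq e^{-\frac{c}{2}t}z+\frac{2\delta}{c}(1-e^{-\frac{c}{2}t})>0$, so $\sigma_0=\infty$ a.s.\ and explosion is almost sure by Lemma \ref{explosionlemma}. In summary: the skeleton of your argument is sound and the speed-measure reading of the Case 3 formula is a nice alternative, but the passage from distributional limits to the stated pathwise conclusions---which is the actual content of the theorem---is missing, and the Case 2 scale computation needs correction.
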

\begin{figure}[h!]
\centering \noindent
\includegraphics[height=.16 \textheight]{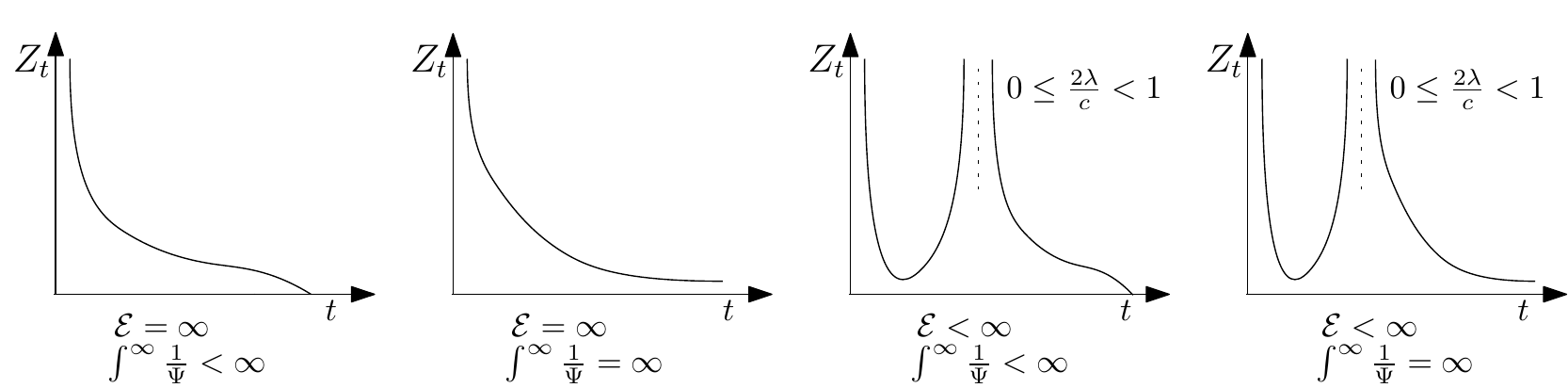}
\caption{Symbolic representation of the two behaviors at $0$ in the non-subordinator case with $\infty$ entrance or reflecting.}
\label{behaviorcbc0}
\end{figure}
We provide now several examples for which different behaviors at infinity occur.
\begin{example}  Consider $\alpha \in (0,2]$, $\alpha\neq 1$ and $\Psi(z)=(\alpha-1)z^{\alpha}$. Since $\int_{0}\frac{|\Psi(z)|}{z}\ddr z<\infty$, then $\mathcal{E}=\infty$ and $\infty$ is an entrance boundary (case (a) in Figure \ref{behaviorcbc}). For any $t\geq 0$, $z\in [0,\infty]$ and $x\in [0,\infty[$ 
$$\mathbb{E}_z(e^{-xZ_t})=\mathbb{E}_x(e^{-zU_t})\text{ with }\ddr U_t=\sqrt{cU_t}\ddr B_t+(1-\alpha)U_t^{\alpha}\ddr t, \ U_0=x,$$
the boundary $0$ of $(U_t,t\geq 0)$ being an exit. Note that when $\alpha \in (0,1)$, the CSBP without competition explodes, so that here  competition prevents explosion. 
\end{example}
\begin{example} Let $\lambda>0$ and $\pi \equiv 0$ in order that $\Psi(x)=-\lambda$ for all $x\geq 0$. If $\frac{2\lambda}{c}<1$ then $\infty$ is regular reflecting (case (c) in Figure \ref{behaviorcbc}).  For any $t\geq 0$, $z\in [0,\infty]$ and $x\in [0,\infty)$ 
$$\mathbb{E}_z(e^{-xZ_t})=\mathbb{E}_x(e^{-zU^0_t})\text{ with } \ddr U^0_t=\sqrt{cU^0_t}\ddr B_t+\lambda \ddr t, \ U^0_0=x,$$
the boundary $0$ of $(U^0_t,t\geq 0)$ being regular absorbing. If $\frac{2\lambda}{c}\geq 1$ then $\infty$ is an exit (case (d) in Figure \ref{behaviorcbc}).  For any $t\geq 0$, $z\in [0,\infty]$ and $x\in (0,\infty)$ 
$$\mathbb{E}_z(e^{-xZ_t})=\mathbb{E}_x(e^{-zU_t})\text{ with } \ddr U_t=\sqrt{cU_t}\ddr B_t+\lambda \ddr t, \ U_0=x,$$
the boundary $0$ of $(U_t,t\geq 0)$ being an entrance.
\end{example}
\begin{remark} Roughly speaking, the latter example can be seen as the continuous-state analogue of the number of fragments in a \textit{fast-fragmentation-coalescence} process as defined in \cite{kyprianou2017}. Note in particular that the same phase transition between $\infty$ reflecting and exit boundary occurs. However, contrary to the process in \cite{kyprianou2017}, the process $(Z_t,t\geq 0)$ has no stationary distribution over $(0,\infty)$.
\end{remark}
\noindent The conditions $\int^{\infty}\log(x)\pi(\ddr x)<\infty$ and $\lambda>0$ are not necessary for having respectively $\mathcal{E}=\infty$ and $\mathcal{E}<\infty$. 
In the following example $\lambda=0$, $\int^{\infty}\log(x)\pi(\ddr x)=\infty$ and a phase transition occurs between entrance and regular. 
\begin{example}\label{example3}
Consider $\lambda=0$, $\sigma\geq 0$, $\gamma\in \mathbb{R}$ and set $\pi(\ddr u)=\frac{\alpha}{u(\log u)^{\beta+1}}\mathbbm{1}_{\{u\geq 2\}}\ddr u$ for some $\alpha>0$, $\beta>0$. 
\begin{itemize}
\item[i)] If $\beta=1$ and $\frac{2\alpha}{c}\leq 1$ then $\mathcal{E}=\infty$ and $\infty$ is an entrance boundary (case (a) in Figure \ref{behaviorcbc}). 
\item[ii)] If $\beta=1$ and $\frac{2\alpha}{c}>1$ then $\mathcal{E}<\infty$ and $\infty$ is a regular reflecting boundary (case (b) in Figure \ref{behaviorcbc}). 
\item[iii)] If $\beta\in ]0,1[$, then $\mathcal{E}<\infty$ and $\infty$ is a regular reflecting boundary (case (b) in Figure \ref{behaviorcbc}).
\end{itemize}
\end{example}
\noindent The following proposition allows us to  study easily Example \ref{example3} and to construct more general explicit L\'evy measures for which $\infty$ is regular or entrance.
\begin{proposition}\label{example} 
Assume $\lambda=0$ and set $$\mathcal{E}':=\int_{0}^\theta \frac{1}{x}\exp\left(-\frac{2}{c}\int_{1}^{\infty}e^{-xv}\frac{\bar{\pi}(v)}{v}\ddr v\right)\ddr x$$
then  $\mathcal{E}<\infty$ if and only if $\mathcal{E}'<\infty$.
Moreover there exists a universal\footnote{in the sense that it does not depend on the L\'evy measure $\pi$} constant $\kappa>0$ and $C_1$, $C_2$ two non-negative constants such that 
$$C_1\int_0^\theta\frac{1}{x}\exp\left(-\frac{2\kappa}{c}\int_{1}^{1/x}\frac{\bar{\pi}(u)}{u}\ddr u\right)\ddr x\leq \mathcal{E}\leq C_2\int_0^\theta\frac{1}{x}\exp\left(-\frac{2}{c\kappa}\int_{1}^{1/x}\frac{\bar{\pi}(u)}{u}\ddr u\right)\ddr x$$
\end{proposition}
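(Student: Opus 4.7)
My plan is to reduce everything to an analysis of the large-jump contribution to $\Psi$ near the origin, in two independent steps. First I would show that $\mathcal{E}$ and $\mathcal{E}'$ differ by a bounded multiplicative factor — which already yields the equivalence $\mathcal{E}<\infty\iff\mathcal{E}'<\infty$ — and second I would compare $A(x):=\int_1^\infty e^{-xv}\bar{\pi}(v)/v\,\ddr v$ with $B(x):=\int_1^{1/x}\bar{\pi}(u)/u\,\ddr u$ by splitting at $v=1/x$ and exploiting monotonicity of $\bar{\pi}$.

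For the first step, write $\Psi(u)/u=\tfrac{\sigma^2}{2}u+\gamma+\tfrac{1}{u}\int_0^1(e^{-uy}-1+uy)\pi(\ddr y)+\tfrac{1}{u}\int_1^\infty(e^{-uy}-1)\pi(\ddr y)$. The polynomial piece contributes $O(1)$ to $\int_x^\theta$ trivially. For the small-jump piece, the bound $0\leq(e^{-uy}-1+uy)/u\leq uy^2/2$ together with $\int_0^1 y^2\pi(\ddr y)<\infty$ gives a uniform-in-$x$ contribution. For the large-jump piece, the identity $(1-e^{-uy})/u=\int_0^y e^{-us}\,\ddr s$ together with Fubini (integrand non-negative) yields
\[\frac{1}{u}\int_1^\infty(e^{-uy}-1)\pi(\ddr y)=-\int_0^1 e^{-us}\bar{\pi}(1)\,\ddr s-\int_1^\infty e^{-us}\bar{\pi}(s)\,\ddr s,\]
and integrating over $u\in[x,\theta]$, applying Fubini once more, produces $-\int_1^\infty e^{-xs}\bar{\pi}(s)/s\,\ddr s$ plus terms bounded in $x$ (the auxiliary term $\int_1^\infty e^{-\theta s}\bar{\pi}(s)/s\,\ddr s$ is finite because $\bar{\pi}(s)\leq\bar{\pi}(1)<\infty$ on $[1,\infty)$). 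There is therefore a finite $R>0$ such that $\bigl|\tfrac{2}{c}\int_x^\theta\Psi(u)/u\,\ddr u+\tfrac{2}{c}A(x)\bigr|\leq R$ for all $x\in(0,\theta)$, and exponentiating gives $e^{-R}\mathcal{E}'\leq\mathcal{E}\leq e^{R}\mathcal{E}'$.

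For the second step, the lower bound is immediate: for $x\leq 1$, $e^{-xv}\geq e^{-1}$ on $[1,1/x]$ gives $A(x)\geq e^{-1}B(x)$. For the upper bound, split $A(x)$ at $v=1/x$; the lower piece is at most $B(x)$, and the upper piece, after the substitution $w=xv$ and using that $\bar{\pi}$ is non-increasing, is at most $\bar{\pi}(1/x)\int_1^\infty e^{-w}/w\,\ddr w$. Monotonicity of $\bar{\pi}$ on $[1/(ex),1/x]$ yields $\bar{\pi}(1/x)\leq\int_{1/(ex)}^{1/x}\bar{\pi}(v)/v\,\ddr v\leq B(x)$ for $x\leq 1/e$, hence $A(x)\leq\kappa_0 B(x)$ with $\kappa_0:=1+\int_1^\infty e^{-w}/w\,\ddr w$. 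Taking $\kappa:=\max(e,\kappa_0)$, the comparison $B(x)/\kappa\leq A(x)\leq\kappa B(x)$ exponentiates to $e^{-2\kappa B(x)/c}\leq e^{-2A(x)/c}\leq e^{-2B(x)/(c\kappa)}$; multiplying by $1/x$, integrating on $(0,\theta)$, and combining with Step 1 yields the stated two-sided bound (contributions of $x\in[1/e,\theta]$ are bounded in $x$ and are absorbed in $C_1,C_2$).

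The main obstacle is the bookkeeping in Step 1, where one must verify that the polynomial and small-jump parts of $\int_x^\theta\Psi(u)/u\,\ddr u$ contribute only a bounded correction in $x$, uniformly enough that its exponential is controlled between two positive constants. The various Fubini exchanges are justified by the non-negativity of $\bar{\pi}$, of $e^{-uy}-1+uy$ and of $1-e^{-uy}$, and by the standard Lévy-measure integrability conditions $\bar{\pi}(1)<\infty$ and $\int_0^1 y^2\pi(\ddr y)<\infty$.
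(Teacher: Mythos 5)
Your argument is correct, and its first half is essentially the paper's: both proofs rewrite $\Psi(u)/u$ via the L\'evy--Khintchine form, use Fubini on the large-jump part to produce $-\int_1^\infty e^{-us}\bar{\pi}(s)\,\ddr s$ (the paper writes this as $-\int_0^\infty e^{-uv}\bar{\pi}(v\vee 1)\,\ddr v$), and check that the Gaussian, drift and small-jump contributions to $\int_x^\theta \Psi(u)/u\,\ddr u$ stay bounded uniformly in $x$, giving $\mathcal{E}\asymp\mathcal{E}'$. Where you diverge is the second half. The paper observes that $H(z):=-\int_z^\theta\Psi(u)/u\,\ddr u$ is, up to bounded terms, $\Phi(z)/z$ for $\Phi$ the Laplace exponent of a driftless subordinator whose L\'evy tail is $\bar{\nu}(v)=\bar{\pi}(v)/v$ on $[1,\infty)$, and then invokes Bertoin's Proposition III.1 to get $\Phi(z)/z\asymp\int_1^{1/z}\bar{\nu}(r)\,\ddr r$ with a universal constant $\kappa$. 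You instead prove the comparison $A(x)\asymp B(x)$ by hand: the split at $v=1/x$, the substitution $w=xv$, and the monotonicity bound $\bar{\pi}(1/x)\leq\int_{1/(ex)}^{1/x}\bar{\pi}(v)/v\,\ddr v$ are exactly the mechanism behind the cited estimate, so your route is self-contained, avoids the external reference, and has the added benefit of producing an explicit admissible value $\kappa=\max\bigl(e,\,1+\int_1^\infty e^{-w}w^{-1}\,\ddr w\bigr)$; the price is a little extra bookkeeping (the restriction to $x\leq 1/e$ and the absorption of the $[1/e,\theta]$ range into $C_1,C_2$, which works because $\mathcal{E}$ and both comparison integrals restricted to $[1/e,\theta]$ are finite and strictly positive constants). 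Both arguments are sound; yours trades a citation for an elementary computation and slightly more explicit constants.
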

\section{Minimal process and time-change}\label{existence}
We will prove in this section Theorem 1 and show that the martingale problem \textbf{(MP)} for the minimal logistic CSBP is well-posed. As described in Definition 3.2 in \cite{MR2134113}, one way to construct a logistic CSBP is to start from an Ornstein-Uhlenbeck type process and to time change it in Lamperti's manner. The problem of explosion was not addressed in \cite{MR2134113} and lies at the heart of our study. We provide therefore some details. We start by recalling some known results about Ornstein-Uhlenbeck type processes. Consider $(Y_t, t\geq 0)$ a spectrally positive L\'evy process with Laplace exponent $-\Psi$, killed at $\infty$
at an independent exponential random variable $\mathbbm{e}_\lambda$ with parameter $\lambda:=-\Psi(0)\geq 0$. Set $(R_t, t\geq 0)$ the process satisfying 
\begin{equation} \label{OU} R_t=z+Y_t-\frac{c}{2}\int_{0}^{t}R_s\ddr s.
\end{equation}
There is a unique process $(R_t,t\geq 0)$ satisfying (\ref{OU}), see Sato \cite[Chapter 3, Section 17 page 104]{MR3185174}. By definition it is called an Ornstein-Uhlenbeck type process with L\'evy process $(Y_t,t\geq 0)$ and parameter $c/2$. Unkilled Ornstein-Uhlenbeck type processes have been deeply studied by Hadjiev \cite{MR889469}, Sato and Yamazato \cite{MR738769} and Shiga \cite{MR1061937}. From Lemma 17.1 in Sato \cite{MR3185174}, one has for any $\theta>0$ and any $s\geq 0$
\begin{equation}\label{LaplacetransformOU}
\mathbb{E}_z(e^{-\theta R_s})=\exp\left(-\theta e^{-\frac{c}{2}s}z+\int_{0}^{s}\Psi(e^{-\frac{c}{2}u}\theta) \ddr u\right).
\end{equation}
In particular, by letting $\theta$ to $0$, we see that the process $(R_t, t\geq 0)$ will never reach $\infty$ in finite time if it is not killed. In the unkilled case, it is shown in \cite{MR1061937} that if $(Y_t,t\geq 0)$ is not a subordinator then the process $(R_t, t\geq 0)$ is irreducible in $\mathbb{R}$. Namely, for any $a\in \mathbb{R}$, if $\sigma_a:=\inf\{t\geq 0, R_t\leq a\}$ then $\mathbb{P}_z(\sigma_a<\infty)>0$ for any $z>0$. On the other hand, if $-\Psi$ is the Laplace exponent of a subordinator with drift $\delta\geq 0$, then the process $(R_t, t\geq 0)$ is irreducible in $(\frac{2\delta}{c},\infty).$  Moreover, the process can be positive recurrent, null-recurrent or transient. \cite[Theorem 1.1]{MR1061937} states that $(R_t,t\geq 0)$ is recurrent (in a pointwise sense) if $\mathcal{E}=\infty$ and transient if $\mathcal{E}<\infty$, where we recall
\begin{equation*}
\mathcal{E}=\int_{0}^{\theta}\frac{1}{x}\exp \left({\frac{2}{c}\int_x^\theta \frac{\imf{\Psi}{u}}{u}\, \ddr u}\right)\ddr x. 
\end{equation*}If $\lambda=-\Psi(0)>0$, then one can easily see that $\mathcal{E}<\infty$, so that explosion by a single jump can be seen as a particular case of transience. 
We work in the sequel with the process $(R_t, t\geq 0)$ stopped on first entry into $(-\infty, 0)$. Define $\sigma_0:=\inf\{t\geq 0, R_t<0\}$, $\theta_t:=\int_{0}^{t\wedge \sigma_0}\frac{\ddr s}{R_s}$ and its right-inverse $t\mapsto C_t:=\inf\{u\geq 0; \theta_u>t\}\in [0,\infty]$. The Lamperti time-change of the stopped process $(R_t,t\geq 0)$ is the process $(\zmin_t,t\geq 0)$ defined by 
\begin{align*}
\zmin_t&=
\begin{cases}
R_{C_t} &  0\leq t<\theta_\infty\\
0& t\geq \theta_\infty \text{ and }\sigma_0<\infty\\
\infty& t\geq \theta_\infty   \text{ and } \sigma_0=\infty.
\end{cases}
\end{align*}
A first consequence of this definition is that the process $(\zmin_t, t\geq 0)$ hit its boundaries if and only if $\theta_\infty<\infty$. On the one hand, if $\sigma_0<\infty$ then $\zeta_\infty= \infty$ and $\zeta_0=\theta_\infty=\int_{0}^{\sigma_0}\frac{\ddr s}{R_s}$. On the other hand, if $\sigma_0=\infty$ then $\zeta_0=\infty$ and $\zeta_\infty=\theta_\infty=\int_{0}^{\infty}\frac{\ddr s}{R_s}$. Note that if $\lambda>0$, then $R_s=\infty$ for any $s\geq \mathbbm{e}_\lambda$ and the last integral is finite. 
\\

Recall (\ref{MP}) and the martingale problem (\textbf{MP}) defining the minimal logistic CSBP.
\begin{lemma}\label{existencemin}
The process $(\zmin_t,t\geq 0)$ is a minimal logistic continuous-state branching process. 
\end{lemma}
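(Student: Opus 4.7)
The plan is to verify the martingale problem (\textbf{MP}) by applying Itô's formula to the Ornstein--Uhlenbeck type process $(R_t, t\geq 0)$ and then pushing the resulting semimartingale decomposition through the Lamperti time-change. Standard Itô calculus for semimartingales with jumps gives, for any $f \in C_c^2((0,\infty))$, a decomposition
\begin{equation*}
f(R_{t \wedge \sigma_0}) - f(R_0) = N_t + \int_0^{t \wedge \sigma_0} \mathscr{A}f(R_s)\,\ddr s,
\end{equation*}
where $N$ is a local martingale and $\mathscr{A}$ is the extended generator of the killed OU-type process, namely
\begin{equation*}
\mathscr{A}f(z) = \frac{\sigma^2}{2}f''(z) - \gamma f'(z) - \frac{c}{2}z f'(z) - \lambda f(z) + \int_0^{\infty}\!\bigl(f(z+u) - f(z) - u\mathbbm{1}_{\{u \leq 1\}}f'(z)\bigr)\,\pi(\ddr u),
\end{equation*}
the killing contributing $-\lambda f(z)$ since $f$ vanishes at the cemetery $\infty$. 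The algebraic cornerstone of the proof is the pointwise identity $\mathscr{L}f(z) = z\,\mathscr{A}f(z)$, which one reads off directly from the definition (\ref{generatormin}).

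Next I would push this through the time-change. Since $\theta$ is absolutely continuous on $[0,\sigma_0)$ with $\ddr\theta_t = \ddr t / R_t$, its right-inverse $C$ satisfies $\ddr C_u = R_{C_u}\,\ddr u$, and the substitution $s = C_u$ yields
\begin{equation*}
\int_0^{C_t}\! \mathscr{A}f(R_s)\,\ddr s = \int_0^t R_{C_u}\,\mathscr{A}f(R_{C_u})\,\ddr u = \int_0^t \mathscr{L}f(\zmin_u)\,\ddr u, \qquad 0 \leq t < \theta_\infty.
\end{equation*}
The general time-change theorem for local martingales (e.g.\ \cite[Proposition V.1.5]{MR1725357}) then ensures that $N_{C_t}$ is a local martingale in the time-changed filtration $(\mathcal{F}_{C_t})$, whence
\begin{equation*}
f(\zmin_t) - f(\zmin_0) - \int_0^t \mathscr{L}f(\zmin_s)\,\ddr s = N_{C_t}, \qquad t < \zeta.
\end{equation*}
To upgrade to a true martingale, I would localize by $\tau_n := \inf\{t \geq 0 : \zmin_t \notin (1/n, n)\}$. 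On $[0,\tau_n]$ the process $\zmin$ stays in a compact subset of $(0,\infty)$, so both $f(\zmin_\cdot)$ and $\mathscr{L}f(\zmin_\cdot)$ are uniformly bounded (the jump integral in $\mathscr{L}f$ being locally bounded in $z$ by standard truncation), and the stopped process is a genuine martingale. Since $\tau_n \uparrow \zeta$ almost surely, this yields exactly the martingale property required by (\textbf{MP}).

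It remains to check that $\zmin$ is a càdlàg Markov process absorbed at $\{0,\infty\}$. Càdlàg paths are inherited from $R$ through the continuous nondecreasing clock $C$. The Markov property follows from the strong Markov property of $R$ combined with the additivity identity $\theta_{t+s}-\theta_t = \int_0^s \ddr v/R_{C_t+v}$, which shows that after time $C_t$ the Lamperti clock restarts from $R_{C_t}=\zmin_t$. Minimality is built into the definition: when $\sigma_0<\infty$ the process is set to $0$ for $t\geq \theta_\infty = \zeta_0$, and when $\sigma_0=\infty$ it is set to $\infty$ for $t\geq \theta_\infty = \zeta_\infty$.

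The main subtlety I anticipate lies in the behavior at an explosion caused by accumulation of large jumps (a scenario that can occur even for $\lambda = 0$, by Theorem \ref{explosion}): there the running integral $\int_0^t \mathscr{L}f(\zmin_s)\,\ddr s$ may fail to be integrable at the explosion time itself, which is precisely why (\textbf{MP}) is stated on $[0,\zeta)$ and why localization by $\tau_n$ is essential rather than cosmetic. Away from the boundaries, however, the arguments above are routine, and the construction of $\zmin$ automatically yields a process with the required absorbing behavior at $0$ and $\infty$.
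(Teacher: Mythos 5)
Your proposal follows essentially the same route as the paper: It\^o's formula for the Ornstein--Uhlenbeck type process, the algebraic identity $\mathscr{L}f(z)=z\,\mathscr{A}f(z)$ (the paper writes $\mathscr{A}$ as $\mathscr{L}^R$; note that in the paper the symbol $\mathscr{A}$ is reserved for the generator of the dual diffusion $U$, so you should rename it), the Lamperti substitution $C_t=\int_0^t \zmin_s\,\ddr s$, and the preservation of the local martingale property under a continuous time change.

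The one place where your argument is not quite closed is the upgrade from local martingale to true martingale. Stopping at $\tau_n=\inf\{t:\zmin_t\notin(1/n,n)\}$ and letting $\tau_n\uparrow\zeta$ only shows that the process is a \emph{local} martingale on $[0,\zeta)$; the martingale problem \textbf{(MP)} asks for a genuine martingale, and passing to the limit in $n$ requires a uniform integrability input that you do not supply. The paper's observation is that for $f\in C^2_c((0,\infty))$ the function $\mathscr{L}f$ is bounded on all of $[0,\infty)$ (in particular $\mathscr{L}f(z)\to 0$ as $z\to\infty$, since $f$, $f'$, $f''$ vanish for large $z$ and the jump integral only sees $f(z+u)$ on a compact set), so that $|f(\zmin_t)-\int_0^t\mathscr{L}f(\zmin_s)\,\ddr s|\leq \|f\|_\infty+t\,\|\mathscr{L}f\|_\infty$ and the local martingale, being bounded on each $[0,t]$, is a true martingale with no localization needed. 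This also shows that your closing concern --- that $\int_0^t\mathscr{L}f(\zmin_s)\,\ddr s$ might fail to be integrable at an explosion caused by accumulating jumps --- is unfounded for compactly supported $f$: the localization is in fact cosmetic here, not essential. Once you insert the global bound on $\mathscr{L}f$, your proof is complete and coincides with the paper's.
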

\begin{proof} 
Notice first that if the process $(\zmin_t,t\geq 0)$, as defined above, hits $0$ or $\infty$, then it is absorbed. Denote by $\mathscr{L}^Y$ the generator of the (possibly killed) L\'evy process $(Y_t,t\geq 0)$ and  $\mathscr{L}^R$ the generator of $(R_t,t\geq 0)$, which acts on $C^2_c([0,\infty))$ as follows
$$\mathscr{L}^Rf(z)=\mathscr{L}^Yf(z)-\frac{c}{2}zf'(z).$$
By It\^o's formula (or by applying \cite[Theorem 3.1]{{MR738769}}), one can see that the process $\left(f(R_t)-\int_{0}^{t}\mathscr{L}^{R}f(R_s)\ddr s, t\geq 0\right)$ is a local martingale. By definition of the time-change, for any $t\in [0,\theta_{\infty})$, $\int_{0}^{C_t}\frac{\ddr s}{R_s}=t$ and then $C_t=\int_{0}^{t}\zmin_s \ddr s.$ A (continuous) time-change of a local martingale remains a local martingale. Hence
$$t\in[0,\theta_\infty) \mapsto f(R_{C_t})-\int_{0}^{C_t}\mathscr{L}^Rf(R_s)\ddr s=f(\zmin_t)-\int_{0}^{t}\zmin_s \mathscr{L}^Rf(\zmin_s)\ddr s$$ 
is a local martingale. By definition, for any $z\geq 0$, $\mathscr{L}f(z)=z\mathscr{L}^{R}f(z)$ and  since $f$ has compact support then $\mathscr{L}f$ is bounded. Therefore the above local martingale has paths which are bounded on time-intervals $[0,t]$, so that it is a true martingale and $(\zmin_t,t\geq 0)$ solves \textbf{(MP)}. 
\end{proof}
\begin{lemma}\label{wellposed} There exists a unique minimal logistic CSBP. 
\end{lemma}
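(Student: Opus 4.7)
The plan is to show uniqueness by inverting the Lamperti time-change from Lemma \ref{existencemin}: any solution of \textbf{(MP)} must, after a suitable time-change by $t\mapsto \int_0^t \zmin_s\,\ddr s$, become an Ornstein--Uhlenbeck type process of the form (\ref{OU}), whose law is uniquely determined by Sato \cite{MR3185174}.

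More precisely, let $(\zmin_t,t\geq 0)$ be any càdlàg solution of \textbf{(MP)} starting from $z\in(0,\infty)$, and recall $\zeta=\inf\{t\geq 0:\zmin_t\notin(0,\infty)\}$. Define the continuous non-decreasing additive functional
\[
A_t:=\int_{0}^{t\wedge\zeta}\zmin_s\,\ddr s,\qquad \tau_t:=\inf\{s\geq 0:A_s>t\},
\]
and set $R_t:=\zmin_{\tau_t}$ for $t<A_\zeta$. Since paths of $\zmin$ are bounded and bounded away from $0$ on any compact subinterval of $[0,\zeta)$, the map $t\mapsto A_t$ is strictly increasing and continuous on $[0,\zeta)$, so $\tau$ is its continuous inverse there, and one checks as in the heuristic $\tau_t=\int_0^t \ddr s/R_s$. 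The first step is to verify that $(R_t,t<A_\zeta)$ solves the martingale problem associated to the generator $\mathscr{L}^R$ of the Ornstein--Uhlenbeck type process: for any $f\in C^2_c([0,\infty))$, \textbf{(MP)} gives that $M_t:=f(\zmin_t)-\int_0^t\mathscr{L}f(\zmin_s)\,\ddr s$ is a martingale up to $\zeta$; a continuous time-change preserves the (local) martingale property, and writing $\mathscr{L}f(z)=z\mathscr{L}^Rf(z)$ together with the change of variable $s=\tau_u$, $\ddr s=\ddr u/R_u$, gives
\[
M_{\tau_t}=f(R_t)-\int_0^t\mathscr{L}^Rf(R_u)\,\ddr u.
\]

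Second, since $f\in C^2_c([0,\infty))$ is arbitrary and the OU-type SDE (\ref{OU}) admits a unique (strong) solution by \cite[Ch.~3, Sect.~17]{MR3185174}, the martingale problem for $\mathscr{L}^R$ stopped at $\sigma_0=\inf\{t:R_t<0\}$ is well-posed on $[0,\infty)$; hence the law of $(R_t,t\in[0,A_\zeta))$ is determined by $R_0=z$. The functional $A_\zeta$ is itself measurable with respect to the path of $R$ (it equals $\sigma_0$ if $\sigma_0<\infty$ and $+\infty$ otherwise, in accordance with the discussion of $\theta_\infty$ in the proof of Lemma \ref{existencemin}, together with the killing time $\mathbbm{e}_\lambda$ if $\lambda>0$). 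Consequently the law of $\zmin_t=R_{A_t}$ on $[0,\zeta)$ is fully determined. By the minimality convention built into the definition of $\zmin$ (namely $0$ and $\infty$ are absorbing after $\zeta$), the law of $(\zmin_t,t\geq 0)$ on the whole of $\mathbb{R}_+$ is uniquely determined.

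The main technical obstacle is to make the preceding time-change rigorous up to $\zeta$ in all three regimes: ordinary extinction ($\zeta=\zeta_0<\infty$), continuous explosion by accumulation of jumps ($\zeta=\zeta_\infty<\infty$ with $\zmin_{\zeta_\infty-}=\infty$), and explosion by a single jump to $\infty$ ($\lambda>0$, corresponding to the killing time $\mathbbm{e}_\lambda$ of the underlying L\'evy process $Y$ driving $R$). In each case one must check that $A_t$ is finite on $[0,\zeta)$ and that $A_\zeta$ coincides almost surely with the exit/killing time of $R$, so that the identification $\zmin=R\circ A$ extends consistently up to $\zeta$; this uses that $\zmin$ has no accumulation of small excursions away from $0$ on compact intervals (since $\zmin_s$ is bounded below on any $[0,t]\subset[0,\zeta)$) and that a càdlàg path with limit $\infty$ must, after integration, have $\int_0^{\zeta-}\zmin_s\,\ddr s$ matching the behaviour of $R$ at its own explosion/killing time. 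Once this bookkeeping is done, uniqueness of (\ref{OU}) transfers directly to uniqueness for \textbf{(MP)}.
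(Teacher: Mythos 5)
Your argument is essentially identical to the paper's proof: both invert the Lamperti time-change on an arbitrary solution of \textbf{(MP)}, identify the time-changed process as an Ornstein--Uhlenbeck type process stopped on entry into $(-\infty,0)$ whose law is pinned down by Sato's uniqueness result for (\ref{OU}), and then recover the law of $\zmin$ from that of $R$ via the inverse change of time. The extra bookkeeping you flag about the three exit regimes is handled in the paper simply by noting $C_\zeta=\inf\{t\geq 0;\,R_t\notin(0,\infty)\}$ and $C_t=\inf\{s\geq 0,\ \int_0^s \ddr u/R_u>t\}$, so there is no substantive difference.
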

\begin{proof}
We have seen above how to construct a solution to the martingale problem. Only uniqueness has to be justified. Consider any solution $(Z_t,t<\zeta)$ to the martingale problem (\textbf{MP}). Set $C_t:=\int_{0}^{t}Z_s\ddr s$ for $t<\zeta$ and $C_t:=C_{\zeta}$ for all $t\geq \zeta$. Let $\theta_t:=\inf\{s\geq 0: C_s>t\}$ and $R_t:=Z_{\theta_t}$ for any time $t\in [0,C_\zeta)$. By definition, $R_{C_t}=Z_t$ and  thus $C_{\zeta}=\inf\{t\geq 0; R_t\notin (0,\infty)\}$. As in Lemma \ref{existencemin}, but in the opposite direction, one sees that the process $(R_t,t<C_{\zeta})$ solves the same martingale problem as an Ornstein-Uhlenbeck type process (with parameters $\tilde{\Psi}=\Psi+\lambda$ and $c/2$) stopped on first entry into $(-\infty, 0)$. The Ornstein-Uhlenbeck type process is uniquely defined in law (see \cite[Chapter 3, section 17]{MR3185174} where  existence and pathwise uniqueness of solution to (\ref{OU}) are established). Moreover, one can readily check that $C_t=\inf\{s\geq 0, \int_{0}^{s}\frac{\ddr u}{R_u}>t\}$, which entails that the law of $(Z_t,t\geq 0)$ is uniquely determined by the law of $(R_t,t\geq 0)$.
\end{proof}
\noindent We now gather some path properties of minimal logistic CSBPs obtained directly by time-change. 
\begin{lemma}\label{longtermminimal} Assume that $-\Psi$ is not the Laplace exponent of a subordinator. If $\mathcal{E}=\infty$, then for any $z>0$ $$\mathbb{P}_z(\zmin_t\underset{t\rightarrow \infty}{\longrightarrow} 0)=1,$$
If $\mathcal{E}<\infty$, then
$$\mathbb{P}_z(\zmin_t\underset{t\rightarrow \infty}{\longrightarrow} 0)=\frac{\int_{0}^{\infty} \frac{1}{x}e^{-zx-\int_{\theta}^{x}\frac{2\Psi(y)}{cy}\ddr y}\ddr x}{\int_{0}^{\infty}\frac{1}{x}e^{-\int_{\theta}^{x}\frac{2\Psi(y)}{cy}\ddr y}\ddr x}<1.$$
\end{lemma}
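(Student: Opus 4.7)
The plan is to reduce the event $\{\zmin_t\to 0\}$ to the hitting event $\{\sigma_0<\infty\}$ of the Ornstein--Uhlenbeck process $(R_t)$ via Lamperti's time-change of Lemma~\ref{existencemin}, and then to compute this hitting probability by identifying a bounded $\mathscr{L}^R$-harmonic function with the appropriate boundary values.

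Since $Y$ is spectrally positive, $R$ has no negative jumps and therefore crosses $0$ continuously, so $R_{\sigma_0-}=R_{\sigma_0}=0$ on $\{\sigma_0<\infty\}$. Via $\zmin_t=R_{C_t}$ with $C_t\uparrow \sigma_0$, this forces $\zmin_t\to 0$ on $\{\sigma_0<\infty\}$. In the non-subordinator case $R$ is irreducible on $\mathbb{R}$ (Shiga's results, cited in Section~\ref{existence}). If $\mathcal{E}=\infty$, Shiga's dichotomy makes $R$ recurrent, so $\sigma_0<\infty$ almost surely, proving part~1. If $\mathcal{E}<\infty$, $R$ is transient; the absence of negative jumps combined with the mean-reverting drift $-\tfrac{c}{2}R$ rules out $R_t\to-\infty$, so on $\{\sigma_0=\infty\}$ one has $R_t\to+\infty$, giving $\{\zmin_t\to 0\}=\{\sigma_0<\infty\}$.

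For the explicit formula in part~2, I would consider
\[ V(z):=\frac{1}{N}\int_0^\infty e^{-zx}\rho(x)\,\ddr x,\qquad \rho(x):=\frac{1}{x}\exp\!\left(-\int_\theta^x\frac{2\Psi(y)}{cy}\,\ddr y\right), \]
with $N:=\int_0^\infty\rho(x)\,\ddr x$. Integrability at $0$ is precisely $\mathcal{E}<\infty$, and integrability at $\infty$ follows from $\Psi(y)/y\to+\infty$ in the non-subordinator case, so $N\in(0,\infty)$; moreover $V(0)=1$ and $V(\infty)=0$. The choice of $\rho$ is dictated by the elementary identity $\mathscr{L}^R e^{-zx}=[\Psi(x)+\tfrac{czx}{2}]e^{-zx}$: exchanging $\mathscr{L}^R$ with the integral and performing one integration by parts using the first-order ODE $(x\rho(x))'=-\tfrac{2\Psi(x)}{c}\rho(x)$ (immediate from the definition) collapses $\mathscr{L}^R V(z)$ to the boundary term $\tfrac{c}{2N}\lim_{x\to 0}x\rho(x)$. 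Since $x\rho(x)=\exp\!\left(-\int_\theta^x 2\Psi(y)/(cy)\,\ddr y\right)$, the hypothesis $\mathcal{E}<\infty$ forces this limit to vanish — otherwise $\rho(x)\sim\mathrm{const}/x$ near $0$ and $\mathcal{E}$ would diverge — so $\mathscr{L}^R V\equiv 0$ on $(0,\infty)$.

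Being bounded and $\mathscr{L}^R$-harmonic, $(V(R_{t\wedge\sigma_0}))_{t\geq 0}$ is a bounded true martingale (via It\^o's formula), converging almost surely to $V(0)\mathbf 1_{\{\sigma_0<\infty\}}+V(\infty)\mathbf 1_{\{\sigma_0=\infty\}}=\mathbf 1_{\{\sigma_0<\infty\}}$ thanks to $R_t\to+\infty$ on the transient event. Bounded convergence then yields $V(z)=\mathbb{P}_z(\sigma_0<\infty)$, which is the announced formula; the strict inequality $V(z)<1$ is immediate since $\rho$ has no atom at $0$. I expect the main obstacle to be the boundary-term computation together with the Fubini exchange needed for the jump part of $\mathscr{L}^R$ (handled via decay of $\rho$ at $\infty$ and the L\'evy--Khintchine estimates on $\Psi$); once those are in place everything else is a routine application of the martingale method.
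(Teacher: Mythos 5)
Your proof is correct, and it reaches the formula by a route that is close in spirit to the paper's but differs in execution at both steps. For the recurrent case $\mathcal{E}=\infty$ you invoke Shiga's recurrence criterion for the Ornstein--Uhlenbeck type process directly, whereas the paper deduces $\mathbb{P}_z(\sigma_0<\infty)=1$ by letting $\mu\to 0$ in the explicit Laplace transform $\mathbb{E}_z[e^{-\mu\sigma_a}]$ of Patie (reproved in the appendix as Lemma \ref{hittingOU}); both are legitimate given the facts already recorded in Section \ref{existence}. For the transient case, the paper works with the eigenfunction $f_\mu(z)=\int_0^\infty e^{-zx}x^{\mu-1}e^{-\int_\theta^x \frac{2\Psi(y)}{cy}\ddr y}\ddr x$ for $\mu>0$, where the boundary term in the integration by parts vanishes automatically, applies optional stopping to the bounded martingale $e^{-\mu(t\wedge\sigma_a)}f_\mu(R_{t\wedge\sigma_a})$, and then sends $\mu\to 0$; you instead set $\mu=0$ from the start, which buys you a genuinely harmonic function and lets you conclude by martingale convergence, at the price of two extra verifications that you correctly supply: the boundary term $\lim_{x\to 0}x\rho(x)$ vanishes precisely because $\mathcal{E}<\infty$ (otherwise $\rho(x)\sim \mathrm{const}/x$ near $0$), and $R_t\to+\infty$ on $\{\sigma_0=\infty\}$ so that the martingale limit is identified as $\mathbf{1}_{\{\sigma_0<\infty\}}$ (the paper needs and proves the same transience fact in Lemma \ref{explosionlemma}). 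Two cosmetic points: in the non-subordinator case $\Psi(y)/y$ need not tend to $+\infty$, only to a strictly positive limit $\mathbf{d}\in(0,\infty]$, which is all you need for integrability of $\rho$ at infinity; and the Fubini/differentiation-under-the-integral exchange for the jump part of $\mathscr{L}^R$, which you flag as the main technical obstacle, is exactly the computation carried out for $f_\mu$ in the paper's Lemma \ref{hittingOU}, so it is no harder in your version.
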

\begin{proof}
By construction, $\{\zmin_t \underset{t\rightarrow \infty}{\longrightarrow} 0\}=\{\sigma_0<\infty\}$ with $\sigma_0:=\inf\{t\geq 0, R_t\leq 0\}$. According to Patie \cite[Proposition 3]{MR2128632}, for any $z>a\geq 0$ and $\mu>0$
\begin{equation}\label{LaplacetransformhittingOU}\mathbb{E}_{z}[e^{-\mu \sigma_a}]=\frac{\int_{0}^{\infty} x^{\mu-1}e^{-zx-\int_{\theta}^{x}\frac{2\Psi(y)}{cy}\ddr y}\ddr x}{\int_{0}^{\infty}x^{\mu-1}e^{-ax-\int_{\theta}^{x}\frac{2\Psi(y)}{cy}\ddr y}\ddr x}.
\end{equation}
In order to make the paper selfcontained, a simple proof of (\ref{LaplacetransformhittingOU}) is provided in the Appendix (see Lemma \ref{hittingOU}). 
One can easily check that  if $\mathcal{E}=\infty$, then 
$$\mathbb{E}_z[e^{-\mu \sigma_0}]=\frac{\int_{0}^{\infty} x^{\mu-1}e^{-zx-\int_{\theta}^{x}\frac{2\Psi(y)}{cy}\ddr y}\ddr x}{\int_{0}^{\infty}x^{\mu-1}e^{-\int_{\theta}^{x}\frac{2\Psi(y)}{cy}\ddr y}\ddr x}\underset{\mu \rightarrow 0}{\longrightarrow} 1.$$
Therefore for any $z\in (0,\infty)$, $\mathbb{P}_z(\sigma_0<\infty)=1$. Now if $\mathcal{E}<\infty$, 
$$\mathbb{P}_z(\sigma_0<\infty)=\underset{\mu \rightarrow 0}{\lim} \mathbb{E}_z[e^{-\mu \sigma_0}]= \frac{\int_{0}^{\infty} x^{-1}e^{-zx-\int_{\theta}^{x}\frac{2\Psi(y)}{cy}\ddr y}\ddr x}{\int_{0}^{\infty}x^{-1}e^{-\int_{\theta}^{x}\frac{2\Psi(y)}{cy}\ddr y}\ddr x}<1.$$
\end{proof}
\begin{lemma}\label{totalprogeny} Assume that $-\Psi$ is not the Laplace exponent of a subordinator. Set $\zeta_a:=\inf\{t\geq 0; \zmin_t\leq a\}$. For any $z>a\geq 0$ and $\mu>0$, one has
$$\mathbb{E}_{z}[e^{-\mu \int_{0}^{\zeta_a}Z_s^{\text{min}}\ddr s}]=\frac{\int_{0}^{\infty} x^{\mu-1}e^{-zx-\int_{\theta}^{x}\frac{2\Psi(y)}{cy}\ddr y}\ddr x}{\int_{0}^{\infty}x^{\mu-1}e^{-ax-\int_{\theta}^{x}\frac{2\Psi(y)}{cy}\ddr y}\ddr x}.$$
\end{lemma}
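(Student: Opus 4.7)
The plan is to reduce the identity to the Laplace-transform formula (\ref{LaplacetransformhittingOU}) by means of the Lamperti time-change used to build $\zmin$ from an Ornstein--Uhlenbeck type process $(R_t,t\geq 0)$ in Lemma \ref{existencemin}. The key point is the pathwise identity $\int_0^{\zeta_a}\zmin_s\,\ddr s = \sigma_a$, where $\sigma_a:=\inf\{s\geq 0: R_s\leq a\}$.

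First I would recall the construction: $\zmin_t = R_{C_t}$ for $t<\theta_\infty$, with $C_t=\int_0^t\zmin_s\,\ddr s$ the continuous right-inverse of $s\mapsto\theta_s=\int_0^{s\wedge\sigma_0}\ddr u/R_u$. Since the L\'evy process $(Y_t)$ is spectrally positive, $(R_t)$ has no negative jumps, hence neither does $(\zmin_t)$, and consequently on $\{\zeta_a<\infty\}$ both processes hit $a$ continuously, giving $\zmin_{\zeta_a}=a$ and $R_{\sigma_a}=a$.

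Second, I would verify the pathwise identity $C_{\zeta_a}=\sigma_a$: for every $t<\zeta_a$ the relation $R_{C_t}=\zmin_t>a$ forces $C_t<\sigma_a$, so continuity of $t\mapsto C_t$ gives $C_{\zeta_a}\leq\sigma_a$; conversely $R_{C_{\zeta_a}}=\zmin_{\zeta_a}=a$ entails $C_{\zeta_a}\geq\sigma_a$. On the complementary event $\{\zeta_a=\infty\}$, both sides are infinite (this follows from the identification $\{\zeta_a=\infty\}=\{\sigma_a=\infty\}$ used in the proof of Lemma \ref{longtermminimal}), so $\int_0^{\zeta_a}\zmin_s\,\ddr s=C_{\zeta_a}=\sigma_a$ holds almost surely.

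Finally, substituting this identity yields $\mathbb{E}_z\bigl[e^{-\mu\int_0^{\zeta_a}\zmin_s\,\ddr s}\bigr]=\mathbb{E}_z\bigl[e^{-\mu\sigma_a}\bigr]$, and (\ref{LaplacetransformhittingOU}), proved in the Appendix as Lemma \ref{hittingOU}, supplies exactly the right-hand side of the claimed formula. There is no genuine obstacle here: once the Lamperti time-change is in place, the statement is effectively a restatement of the Ornstein--Uhlenbeck hitting-time formula under the random clock $C$, the only mild subtlety being the bookkeeping at the hitting time, which is resolved by the absence of negative jumps of $\zmin$ and $R$.
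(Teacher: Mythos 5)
Your proof is correct and follows exactly the paper's route: the paper's own argument is the one-line observation that the Lamperti time-change gives $\sigma_a=\int_0^{\zeta_a}Z^{\text{min}}_s\,\ddr s$ almost surely, after which the formula is immediate from (\ref{LaplacetransformhittingOU}). You merely spell out the pathwise verification of that identity (including the no-negative-jumps bookkeeping at the hitting time), which the paper leaves implicit.
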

\begin{proof}
By the time-change $\sigma_a=\int_{0}^{\zeta_a}Z_s^{\text{min}}\ddr s \text{ a.s}$ and the statement follows directly by (\ref{LaplacetransformhittingOU}).
\end{proof}
%
%
\noindent The next lemma establishes Theorem \ref{explosion}.
\begin{lemma}[Explosion] \label{explosionlemma} The minimal process explodes with positive probability if and only if $\mathcal{E}<\infty$. 
\end{lemma}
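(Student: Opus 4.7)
The plan is to reduce explosion of $(\zmin_t,t\geq 0)$ to a property of the driving Ornstein--Uhlenbeck type process $(R_t,t\geq 0)$ via the Lamperti construction: by definition of $(\zmin_t)$ one has $\{\zeta_\infty<\infty\}=\{\sigma_0=\infty\}\cap\{\theta_\infty<\infty\}$ with $\theta_\infty=\int_0^\infty \ddr s/R_s$. The key input is Shiga's dichotomy already recalled in Section \ref{existence}: $R$ is pointwise recurrent on its irreducibility class if and only if $\mathcal{E}=\infty$.

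For the direction $\mathcal{E}=\infty\Rightarrow$ no explosion, I would split according to whether $-\Psi$ is the Laplace exponent of a subordinator. In the non-subordinator case, Lemma \ref{longtermminimal} already provides $\mathbb{P}_z(\sigma_0<\infty)=1$, so $\zeta_\infty=\infty$ a.s. In the subordinator case $R$ is irreducible on $(2\delta/c,\infty)$ so $\sigma_0=\infty$ a.s., but pointwise recurrence forces $R$ to spend a Lebesgue-infinite set of times in any compact $K=[a,b]\subset(2\delta/c,\infty)$, whence the lower bound $\theta_\infty\geq b^{-1}\int_0^\infty \mathbbm{1}_K(R_s)\,\ddr s=\infty$ a.s.

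For the converse $\mathcal{E}<\infty\Rightarrow\mathbb{P}_z(\zeta_\infty<\infty)>0$, the case $\lambda=-\Psi(0)>0$ is immediate: the killed process $R$ jumps to $\infty$ at the independent exponential time $\mathbbm{e}_\lambda$, so on the positive-probability event $\{\mathbbm{e}_\lambda<\sigma_0\}$ one has $\sigma_0=\infty$ and $\theta_\infty=\int_0^{\mathbbm{e}_\lambda}\ddr s/R_s<\infty$, producing explosion (finiteness of that integral uses only that $R$ is a c\`adl\`ag process bounded away from $0$ on any compact subinterval of $[0,\mathbbm{e}_\lambda)$ on this event). When $\lambda=0$, Shiga's theorem gives transience of $R$, and Lemma \ref{longtermminimal} yields $\mathbb{P}_z(\sigma_0=\infty)>0$ in the non-subordinator case while irreducibility on $(2\delta/c,\infty)$ gives $\sigma_0=\infty$ a.s.\ in the subordinator case. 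It remains to upgrade transience into $\theta_\infty<\infty$ on the escape event.

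The main obstacle is precisely this last conversion. My plan is to exploit a Patie-type Laplace transform identity in the spirit of Lemma \ref{totalprogeny}: for levels $a$ in the range of the OU process, $\mathbb{E}_z[\mathrm{e}^{-\mu \sigma_a}]$ admits an explicit ratio of integrals, whose common exponential factor $\exp\bigl(\frac{2}{c}\int_x^\theta \frac{\Psi(y)}{y}\,\ddr y\bigr)$ is exactly the integrand of $\mathcal{E}$. By passing to the appropriate limit of the auxiliary level, followed by $\mu\downarrow 0$ and justifying interchanges by monotone and dominated convergence, the denominator reduces up to an exponential factor to $\mathcal{E}$. Its finiteness hence produces a strictly positive limit for the Laplace transform of $\theta_\infty$ restricted to the escape event, which through the time-change identity $\sigma_a=\int_0^{\zeta_a}\zmin_s\,\ddr s$ translates into $\mathbb{P}_z(\zeta_\infty<\infty)>0$ and completes the proof.
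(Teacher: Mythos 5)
Your reduction $\{\zeta_\infty<\infty\}=\{\sigma_0=\infty\}\cap\{\theta_\infty<\infty\}$ and your treatment of the direction $\mathcal{E}=\infty\Rightarrow$ no explosion are essentially the paper's: in the non-subordinator case Lemma \ref{longtermminimal} indeed gives $\sigma_0<\infty$ a.s., and in the subordinator case your occupation-time claim is correct, though the assertion that pointwise recurrence forces $\int_0^\infty\mathbbm{1}_K(R_s)\,\ddr s=\infty$ is exactly the point that needs an argument; the paper supplies it by observing that the successive excursions of $R$ below a level $a$ are i.i.d.\ with strictly positive lengths, so $\int_0^\infty \ddr s/R_s\geq a^{-1}\sum_n(b_n-a_n)=\infty$. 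You should include that (or an equivalent traversal-time bound); a recurrent process does not in general spend infinite Lebesgue time in a compact set without such an argument. The case $\lambda>0$ is also fine modulo the small point that you need $\inf_{s<\mathbbm{e}_\lambda}R_s>0$ on an event of positive probability (not merely boundedness away from $0$ on compact subintervals of $[0,\mathbbm{e}_\lambda)$, which is compatible with a divergent integral).

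The genuine gap is in the converse, and it sits exactly where you flag "the main obstacle": upgrading transience to $\theta_\infty=\int_0^\infty \ddr s/R_s<\infty$ on $\{\sigma_0=\infty\}$. The Patie identity you invoke is the Laplace transform of the \emph{downward} passage time $\sigma_a=\inf\{t:R_t\leq a\}$, which by the time change equals $\int_0^{\zeta_a}\zmin_s\,\ddr s$; no limit in $a$ or in $\mu$ turns this into information about $\theta_\infty$, which lives on the other time scale ($\theta_\infty=\zeta_\infty$, not $\sigma_a$). Letting $\mu\downarrow 0$ in Patie's formula only recovers $\mathbb{P}_z(\sigma_a<\infty)$, i.e.\ transience, and transience alone does not imply $\int_0^\infty \ddr s/R_s<\infty$ (a transient path with $R_s\asymp\sqrt{s}$ would have a divergent integral). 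What is actually needed is a quantitative resolvent computation: from Sato's formula $\mathbb{E}_z(e^{-\theta R_s})=\exp(-\theta e^{-cs/2}z+\int_{\theta e^{-cs/2}}^{\theta}\frac{2\Psi(v)}{cv}\ddr v)$ one gets
\begin{equation*}
\int_{0}^{\infty}\mathbb{E}_z(e^{-\theta R_s})\,\ddr s=\frac{2}{c}\int_{0}^{\theta}\frac{1}{x}\exp\Bigl(-xz+\int_{x}^{\theta}\frac{2\Psi(v)}{cv}\ddr v\Bigr)\ddr x,
\end{equation*}
which is finite precisely when $\mathcal{E}<\infty$; integrating over $\theta\in(0,b)$ and applying Tonelli yields $\mathbb{E}_z[\int_0^\infty\frac{1-e^{-bR_s}}{R_s}\ddr s,\ \sigma_0=\infty]<\infty$, and since $R_s\to\infty$ on $\{\sigma_0=\infty\}$ one has $\frac{1-e^{-bR_s}}{R_s}\sim\frac{1}{R_s}$, whence $\theta_\infty<\infty$ a.s.\ on that event. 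This computation (or some equivalent control of the Green function of $R$ near $+\infty$) is the missing ingredient; your sketch as written does not produce it.
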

\begin{proof}
On the event $\{\sigma_0<\infty\}$,  $(\zmin_t, t\geq 0)$ converges towards $0$ almost-surely and thus does not explode.  We then focus on the event $\{\sigma_0=\infty\}$. If $\lambda>0$ then explosion is trivial. Assume now $\lambda=0$. By time-change, the process $(\zmin_t, t\geq 0)$ explodes if and only if $\int^{\infty}\frac{1}{R_s}\ddr s<\infty$. Assume first $\mathcal{E}=\infty$. Let $a>0$ and consider the successive excursions of $(R_t,t\geq 0)$ under $a$. Since the process is recurrent, there is an infinite number of such excursions. Let $b_0:=0$ and for any $n\geq 1$, $a_n:=\inf\{t>b_{n-1}, R_t\leq a\}$, $b_n:=\inf\{t>a_n, R_t>a\}$. We see that\begin{equation*}
\int_0^\infty \frac{\ddr s}{R_{s}}
\geq \sum_{n\geq 1} \int_{a_n}^{b_n} \frac{1}{R_s}\, \ddr s \geq \sum_{n\geq 1}\frac{b_n-a_n}{a}. 
\end{equation*}Since excursions are i.i.d and they have positive lengths, plainly $$\mathbb{E}_{z}[e^{-\int_0^\infty \frac{\ddr s}{R_{s}}},\sigma_0=\infty]
\leq \mathbb{E}_{z}[e^{-\frac{1}{a}\sum_{n\geq 1}(b_n-a_n)}]=0.$$
Therefore $\theta_\infty=\infty$. Hence, $\zmin_t<\infty$ for all $t>0$ and $\infty$ is inaccessible.  Now consider the case $\mathcal{E}<\infty$, the unstopped process $(R_t, t\geq 0)$ is transient, and the event $\{\sigma_0=\infty\}$ has positive probability. One has to check that the integral $\int_0^{\infty}\frac{1}{R_s}\ddr s$ is finite almost-surely on the event $\{\sigma_0=\infty\}$. 
Recall the Laplace transform (\ref{LaplacetransformOU}), one has $$\mathbb{E}_z(e^{-\theta R_s})=\exp\left(-\theta e^{-\frac{c}{2}s}z+\int_{0}^{s}\Psi(e^{-\frac{c}{2}u}\theta) \ddr u\right).$$
By the change of variables $v=e^{-\frac{c}{2}u}\theta$, we get $\int_{0}^{s}\Psi(e^{-\frac{c}{2}u}\theta)\ddr u=\int_{\theta e^{-\frac{2}{c}s}}^{\theta}\frac{2\Psi(v)}{cv}\ddr v$, therefore
$$\mathbb{E}_z(e^{-\theta R_s})=\exp\left(-\theta e^{-\frac{c}{2}s}z +\int_{\theta e^{-\frac{c}{2}s}}^{\theta}\frac{2\Psi(v)}{cv}\ddr v\right)$$
and the same change of variables $x=\theta e^{-\frac{c}{2}s}$ provides,
$$\int_{0}^{\infty}\mathbb{E}_z(e^{-\theta R_s})\ddr s=\frac{2}{c}\int_{0}^{\theta}\frac{1}{x}\exp\left(-xz+\int_{x}^{\theta}\frac{2\Psi(v)}{cv}\ddr v\right)\ddr x.$$
Since $\mathcal{E}<\infty$, the last integral is finite for any $\theta>0$. Let $b>0$. By Tonelli, one has 
\begin{align}\label{meanexplosion} \int_{0}^{\infty}\mathbb{E}_z\left(\frac{1-e^{-bR_s}}{R_s}, \sigma_0=\infty\right)\ddr s&=\int_{0}^{b}\int_{0}^{\infty}\mathbb{E}_z(e^{-\theta R_s},\sigma_0=\infty)\ddr s\ddr \theta \nonumber \\
&\leq \int_{0}^{b}\int_{0}^{\infty}\mathbb{E}_z(e^{-\theta R_s})\ddr s\ddr \theta\nonumber\\ 
&= \frac{2}{c}\int_{0}^{b}\ddr \theta\int_{0}^{\theta}\frac{1}{x}\exp\left(-xz+\int_{x}^{\theta}\frac{2\Psi(v)}{cv}\ddr v\right)\ddr x.\end{align}
The upper bound is finite since $\theta\in(0,b)\mapsto \int_{0}^{\theta}\frac{1}{x}e^{\int_{x}^{\theta}\frac{2\Psi(v)}{cv}\ddr v}\ddr x$ is bounded. Thus $$\mathbb{E}_z\left(\int_{0}^{\infty}\frac{1-e^{-bR_s}}{R_s}\ddr s,\sigma_0=\infty\right)<\infty.$$ 
We deduce then that on the event $\{\sigma_0=\infty\}$, $\int_{0}^{\infty}\frac{1-e^{-b R_s}}{R_s}\ddr s<\infty$ a.s. Since $\mathcal{E}<\infty$ then on $\{\sigma_0=\infty\}$, $R_s\underset{s\rightarrow \infty}{\longrightarrow} \infty$ a.s and $\frac{1-e^{-bR_s}}{R_s}\underset{s\rightarrow \infty}{\sim} \frac{1}{R_s}$ a.s. Therefore $$\mathbb{P}_z\left(\int_{0}^{\infty}\frac{\ddr s}{R_s} <\infty|\sigma_0=\infty\right)=1,$$ and the process $(\zmin_t,t\geq 0)$ explodes. 
\end{proof}
\begin{lemma}\label{irreducibleclass} When $\mathcal{E}<\infty$, explosion is almost-sure if and only if $-\Psi$ is the Laplace exponent of a subordinator.  
\end{lemma}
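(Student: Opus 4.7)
The plan is to exploit the Lamperti-type construction of $(\zmin_t,t\geq 0)$ from the Ornstein--Uhlenbeck type process $(R_t,t\geq 0)$ already used to prove Lemma~\ref{explosionlemma}, together with the classical irreducibility results of Shiga~\cite{MR1061937} that were recalled in the text. By construction, on $\{\sigma_0<\infty\}$ the process $(\zmin_t,t\geq 0)$ is absorbed at $0$ in finite time (since $\zeta_0=\int_0^{\sigma_0}\ddr s/R_s$), so that it does not explode on this event. On $\{\sigma_0=\infty\}$, Lemma~\ref{explosionlemma} shows precisely that under the assumption $\mathcal{E}<\infty$ the integral $\int_0^{\infty}\ddr s/R_s$ is finite almost-surely, hence the process explodes. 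Therefore, up to null sets,
\[\{\zmin\text{ explodes}\}=\{\sigma_0=\infty\},\]
and the statement reduces to the equivalence: $\mathbb{P}_z(\sigma_0<\infty)=0$ for every $z>0$ if and only if $-\Psi$ is the Laplace exponent of a subordinator.

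For the ``if'' direction, assume $-\Psi$ is the Laplace exponent of a (possibly killed) subordinator, with drift $\delta\geq 0$ and killing rate $\lambda\geq 0$. If $\lambda>0$, the underlying L\'evy process $Y$ jumps to $\infty$ at the exponential time $\mathbbm{e}_\lambda$, so $R_s=\infty$ for $s\geq \mathbbm{e}_\lambda$, and in particular $\sigma_0=\infty$ almost-surely. If $\lambda=0$, then $Y$ is a genuine subordinator, and the fact recalled from~\cite{MR1061937} states that the unkilled Ornstein--Uhlenbeck type process lives in and is irreducible on $(\tfrac{2\delta}{c},\infty)\subset(0,\infty)$; thus $R_s>0$ for all $s\geq 0$ and again $\sigma_0=\infty$ almost-surely. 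In both cases $\mathbb{P}_z(\sigma_0<\infty)=0$, so explosion occurs almost-surely.

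For the ``only if'' direction, assume $-\Psi$ is not the Laplace exponent of a subordinator. Then by Shiga's result (recalled before equation~(\ref{LaplacetransformOU})), the Ornstein--Uhlenbeck type process $(R_t,t\geq 0)$ is irreducible on the whole real line, so for every $z>0$ one has $\mathbb{P}_z(\sigma_0<\infty)>0$ (this is also seen at once from Lemma~\ref{longtermminimal}, whose displayed ratio is strictly positive when $\mathcal{E}<\infty$). Consequently $(\zmin_t,t\geq 0)$ hits $0$ with positive probability and explosion is not almost-sure.

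The only delicate point is the boundary case $\lambda=0,\ \delta=0$ in the subordinator direction: one has to know that $R$ does not touch the boundary $0$ of its state space $(0,\infty)$. This is the content of Shiga's irreducibility statement on the open interval $(\tfrac{2\delta}{c},\infty)$, which is invoked as a black box; no further calculation is required, since the heavy lifting (the identification $\{\text{explosion}\}=\{\sigma_0=\infty\}$ under $\mathcal{E}<\infty$) has already been done in Lemma~\ref{explosionlemma}.
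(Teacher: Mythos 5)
Your reduction of the statement to the equivalence ``$\mathbb{P}_z(\sigma_0<\infty)=0$ for all $z>0$ if and only if $-\Psi$ is the Laplace exponent of a subordinator'', via the identification $\{\zeta_\infty<\infty\}=\{\sigma_0=\infty\}$ coming from Lemma \ref{explosionlemma}, is exactly the paper's first step, and your treatment of the non-subordinator direction (irreducibility of the unkilled Ornstein--Uhlenbeck type process on the whole line, equivalently the strictly positive ratio in Lemma \ref{longtermminimal}) also matches the paper.

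There is, however, a genuine gap in the subordinator direction, precisely at the point you flag as ``delicate'' and then black-box. What is needed there is a pathwise confinement statement: $R_t>0$ for all $t\geq 0$ almost surely. You derive it from ``Shiga's irreducibility statement on the open interval $(\tfrac{2\delta}{c},\infty)$'', but irreducibility on an interval only says which states are accessible with positive probability; as recalled in the paper it asserts that from $z>\tfrac{2\delta}{c}$ every point of $(\tfrac{2\delta}{c},\infty)$ is hit with positive probability, which neither covers starting points $z\in(0,\tfrac{2\delta}{c}]$ nor forbids the process from visiting $(-\infty,0]$. Likewise, in the killed case $\lambda>0$, the observation that $R_s=\infty$ for $s\geq\mathbbm{e}_\lambda$ does not by itself exclude $\sigma_0<\mathbbm{e}_\lambda$. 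The paper closes this gap with a two-line comparison argument: since $Y$ is nondecreasing (up to the killing time), equation (\ref{OU}) yields $R_t\geq r_t$ where $\ddr r_t=\delta\,\ddr t-\frac{c}{2}r_t\,\ddr t$ with $r_0=z$, hence $R_t\geq e^{-\frac{c}{2}t}z+\frac{2\delta}{c}\left(1-e^{-\frac{c}{2}t}\right)>0$ for all $t\geq 0$. Replacing your appeal to irreducibility by this (or an equivalent) positivity argument makes the proof complete, and it then coincides with the paper's.
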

\begin{proof} 
We have seen in the proof of Lemma \ref{explosionlemma} that when $\mathcal{E}<\infty$, the following two events coincide $$\{\zeta_\infty=\infty\}=\{\sigma_0<\infty\}.$$
In the non-subordinator case, one has $\mathbb{P}_z(\sigma_0=\infty)<1$ since the unstopped process $(R_t,t\geq 0)$ is irreducible in $(-\infty,\infty]$. Assume that $-\Psi$ is the Laplace exponent of a subordinator with drift $\delta\geq 0$ (possibly killed at rate $\lambda$). We show that $\sigma_0=\infty$ a.s. Let $(Y_t,t\geq 0)$ denote the subordinator with Laplace exponent $-\Psi$. Since $Y_t\geq z+\delta t$ for all $t\geq 0$ $\mathbb{P}_z$-a.s, a comparison argument in (\ref{OU}) entails that $R_t\geq r_t$ for all $t\geq 0$, $\mathbb{P}_z$-a.s, with $(r_t,t\geq 0)$ the solution to $\ddr r_t=\delta \ddr t-\frac{c}{2}r_t\ddr t$ with $r_0=z$. We deduce that $R_t\geq e^{-\frac{c}{2}t}z+\frac{2\delta}{c}(1-e^{-\frac{c}{2}t})>0$ for all $t\geq 0$, $\mathbb{P}_z$-a.s. This entails $\mathbb{P}_z(\sigma_0=\infty)=1$ for any $z>0$. 
\end{proof}
\begin{remark} When $-\Psi$ is the Laplace exponent of a subordinator, the Ornstein-Uhlenbeck type process is irreducible in $(\frac{2\delta}{c},\infty)$. Namely, for any $z>\frac{2\delta}{c}$, the process starting from $z$ hit any value in $(\frac{2\delta}{c},\infty)$ with positive probability. By time-change, $(\zmin_t,0\leq t< \zeta_\infty)$ is therefore also irreducible in $(\frac{2\delta}{c},\infty)$.
\end{remark}

\begin{lemma} If $\lambda>0$, then the minimal process always explodes by a jump to $\infty$.
In other words, the two types of explosion cannot occur for a given process.
\end{lemma}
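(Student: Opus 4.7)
The plan is to trace how the killing of the underlying L\'evy process propagates through the Lamperti time-change of Section \ref{existence}. Recall that when $\lambda := -\Psi(0) > 0$, the spectrally positive L\'evy process $(Y_t)$ is killed at an independent exponential variable $\mathbbm{e}_\lambda$, so that $Y_t = \infty$ for $t \geq \mathbbm{e}_\lambda$ while $(Y_s, s < \mathbbm{e}_\lambda)$ is an unkilled spectrally positive L\'evy process with finite paths. Through the defining SDE (\ref{OU}) this killing transfers verbatim to $R$: on $[0, \mathbbm{e}_\lambda)$ the process $R$ coincides with the unkilled Ornstein--Uhlenbeck type process, and $R_{\mathbbm{e}_\lambda^-}$ is almost surely finite while $R_{\mathbbm{e}_\lambda} = \infty$.

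I would then split according to $\sigma_0$. On the event $\{\sigma_0 \leq \mathbbm{e}_\lambda\}$, $R$ enters $(-\infty, 0)$ before the killing, and the time-change yields absorption of $\zmin$ at $0$; no explosion occurs. On $\{\sigma_0 > \mathbbm{e}_\lambda\}$, applying the convention $1/\infty = 0$ to $\theta_t = \int_0^{t \wedge \sigma_0} \ddr s / R_s$ gives $\theta_t = \int_0^{\mathbbm{e}_\lambda^-} \ddr s / R_s$ for every $t \geq \mathbbm{e}_\lambda$, and this last integral is a.s.\ finite because, on that event, $R$ remains bounded away from $0$ on the compact interval $[0, \mathbbm{e}_\lambda]$ (the absence of negative jumps of $R$ together with $\sigma_0 > \mathbbm{e}_\lambda$ prevents the infimum from being $0$). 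Hence $\zeta_\infty = \theta_\infty < \infty$, and inverting the time-change gives $\zmin_{\zeta_\infty^-} = R_{\mathbbm{e}_\lambda^-} < \infty$: the minimal process explodes by a single jump from a finite value to $\infty$.

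For the second assertion I would observe that when $\lambda = 0$, $Y$ has no killing and hence no jump to $\infty$; by (\ref{OU}) neither does $R$, and by the time-change neither does $\zmin$. Therefore, if $\lambda = 0$ and explosion does occur (which requires $\mathcal{E} < \infty$ by Lemma \ref{explosionlemma}), it must happen with $\zmin_{\zeta_\infty^-} = \infty$, that is, by accumulation of large jumps. Since $\lambda$ is a fixed parameter of $\Psi$, the two explosion mechanisms cannot both occur for a given process. The main technical point is the claim that $\inf_{s \leq \mathbbm{e}_\lambda} R_s > 0$ on $\{\sigma_0 > \mathbbm{e}_\lambda\}$; this is where the spectral positivity of $Y$ is essential, as it prevents $R$ from crossing $0$ by a downward jump, so that any path approaching $0$ from above must actually enter $(-\infty, 0)$ via its continuous part before $\mathbbm{e}_\lambda$, contradicting $\sigma_0 > \mathbbm{e}_\lambda$. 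Once this is settled, integrability of $1/R$ on $[0, \mathbbm{e}_\lambda]$ is immediate and the argument is complete.
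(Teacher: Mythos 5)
Your proposal is correct and follows essentially the same route as the paper: transfer the killing time $\mathbbm{e}_\lambda$ of the L\'evy process to the Ornstein--Uhlenbeck type process $R$ via (\ref{OU}), observe that $R$ (hence $\zmin$) is finite strictly before $\mathbbm{e}_\lambda$ (resp.\ $\theta_{\mathbbm{e}_\lambda}$), and conclude that on $\{\sigma_0=\infty\}$ the explosion is realized by the single jump at time $\theta_{\mathbbm{e}_\lambda}=\int_0^{\mathbbm{e}_\lambda}\frac{\ddr s}{R_s}$. You additionally justify the finiteness of this integral (which the paper only asserts, earlier in Section \ref{existence}) via positivity of $\inf_{s\leq \mathbbm{e}_\lambda}R_s$ on $\{\sigma_0>\mathbbm{e}_\lambda\}$; that claim deserves a word of care (the infimum of a c\`adl\`ag path can a priori be attained as a left limit just before an upward jump), but it is a minor technicality and does not affect the argument.
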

\begin{proof} 
Let $(\zmin_t,t\geq 0)$ be a minimal logistic CSBP with $\lambda>0$. By the time-change, $(R_t,t\geq 0):=(Z_{\theta_t},t\geq 0)$ is an Ornstein-Uhlenbeck type process killed at some exponential random variable $\mathbbm{e}_\lambda$ and stopped at its first entry in $(-\infty,0)$. Since for any $s<\mathbbm{e}_\lambda$, $R_{s}<\infty$, then  $Z_t=R_{C_t}<\infty$ for any $t<\theta_{\mathbbm{e}_\lambda}$. Therefore, the process cannot explode before $\theta_{\mathbbm{e}_\lambda}$ and on the event $\{\sigma_0=\infty\}$, explosion is made by a single jump which occurs at time $\theta_{\mathbbm{e}_\lambda}=\int_{0}^{\mathbbm{e}_\lambda}\frac{\ddr s}{R_s}$. 
\end{proof}

\begin{remark} We have seen that when the Ornstein-Uhlenbeck type process $(R_t,t\geq 0)$ is transient, the logistic CSBP explodes. Therefore, a logistic CSBP cannot grow indefinitely without exploding. This is a striking difference with CSBPs where indefinite growth with no explosion can occur when the L\'evy process $(Y_t,t\geq 0)$ drifts "slowly" towards $\infty$.
\end{remark}
\section{Infinity as an entrance boundary}\label{entrancesection}
The goal of this section is to show Theorem \ref{entranceboundarytheo}. The proof will follow from Lemmas \ref{entranceU} (part 1), \ref{dualnonexplosive} and \ref{Zextendedentrance}.
\subsection{Generalized Feller diffusions}\label{generalizedFellerdiffusion}
Recall $(U_t, t<\tau)$ with $\tau:=\inf\{t\geq 0; U_t\notin (0,\infty)\}$ solution to the sde 
\begin{equation}\label{generalizedFellerdiffusionequation}
U_t=x+\int_{0}^{t}\sqrt{cU_s}\ddr B_s-\int_{0}^{t}\Psi(U_s)\ddr s
\end{equation}
where $(B_t, t\geq 0)$ is a Brownian motion. The following observation is our starting point in the study of logistic continuous-state branching processes by duality.
\begin{lem}[Generator duality]\label{generatordualitylemma} 
For all $x\in [0,\infty[$ and $z\in [0,\infty[$, let $e_{x}(z)=e^{-xz}$, then  
\begin{equation}\label{generatorduality}
\mathscr{L}e_{x}(z)=\mathscr{A}e_z(x)
\end{equation}
with $$\mathscr{A}f(x)=\frac{c}{2}xf''(x)-\Psi(x)f'(x).$$
\end{lem}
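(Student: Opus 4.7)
The plan is to verify the identity by direct substitution of $e_x(z)=e^{-xz}$ into both sides and collecting terms. There is no real obstacle; the claim is essentially the observation that the Lévy--Khintchine expression for $\Psi(x)$ appears when one applies the generator $\mathscr{L}$ (as defined in (\ref{generatormin})) to the exponential in the $z$-variable, and the only term of $\mathscr{L}$ that is not already of the form ``$z\cdot (\text{piece of }\Psi(x))\cdot e^{-xz}$'' is the competition drift $-\tfrac{c}{2}z^2 f'(z)$, which on exponentials produces the symmetric $\tfrac{c}{2}xz^{2}e^{-xz}$ term that corresponds to the diffusion piece of $\mathscr{A}$.

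Concretely, with $f(z)=e_x(z)=e^{-xz}$, I would first record $f'(z)=-xe^{-xz}$, $f''(z)=x^{2}e^{-xz}$, and
\[
f(z+u)-f(z)-u\mathbf{1}_{[0,1]}(u)f'(z)=e^{-xz}\bigl(e^{-xu}-1+xu\mathbf{1}_{[0,1]}(u)\bigr).
\]
Substituting into $\mathscr{L}$ as given in (\ref{generatormin}) and factoring $ze^{-xz}$ out of all but the last term yields
\[
\mathscr{L}e_{x}(z)=ze^{-xz}\Bigl(-\lambda+\tfrac{\sigma^{2}}{2}x^{2}+\gamma x+\int_{0}^{\infty}(e^{-xu}-1+xu\mathbf{1}_{[0,1]}(u))\pi(\ddr u)\Bigr)+\tfrac{c}{2}xz^{2}e^{-xz},
\]
and the bracket is precisely $\Psi(x)$ by (\ref{Levykhintchine}). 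Hence $\mathscr{L}e_{x}(z)=z\Psi(x)e^{-xz}+\tfrac{c}{2}xz^{2}e^{-xz}$.

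On the other side, setting $g(x)=e_z(x)=e^{-xz}$ gives $g'(x)=-ze^{-xz}$ and $g''(x)=z^{2}e^{-xz}$, whence
\[
\mathscr{A}e_{z}(x)=\tfrac{c}{2}x\,z^{2}e^{-xz}-\Psi(x)\bigl(-ze^{-xz}\bigr)=\tfrac{c}{2}xz^{2}e^{-xz}+z\Psi(x)e^{-xz}.
\]
Comparing the two expressions establishes the equality $\mathscr{L}e_{x}(z)=\mathscr{A}e_{z}(x)$ for all $x,z\in[0,\infty)$. The only point worth flagging is that the finiteness of the $\pi$-integral is automatic since $|e^{-xu}-1+xu\mathbf{1}_{[0,1]}(u)|\lesssim (1\wedge u^{2})$ uniformly on compact sets of $x$, so no integrability issue arises and the exchange of $z$ and the integral sign in the jump part of $\mathscr{L}$ is justified.
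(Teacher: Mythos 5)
Your computation is correct and is exactly the paper's argument: the paper's proof simply states that one "readily checks" $\mathscr{L}e_{x}(z)=\Psi(x)ze_x(z)+\frac{c}{2}xz^{2}e_{x}(z)=-\Psi(x)\frac{\partial e_{z}(x)}{\partial x}+\frac{c}{2}x\frac{\partial^{2}e_{z}(x)}{\partial x^{2}}$, which is precisely the substitution you carry out in detail. Your added remark on the integrability of the jump term is a harmless (and welcome) extra justification.
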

\begin{proof}
One can readily check that for all $x$ and $z$ in $]0,\infty[$, $$\mathscr{L}e_{x}(z)=\Psi(x)ze_x(z)+\frac{c}{2}xz^{2}e_{x}(z)=-\Psi(x)\frac{\partial e_{z}(x)}{\partial x}+\frac{c}{2}x\frac{\partial^{2}e_{z}(x)}{\partial x^{2}}.$$
\end{proof}
\noindent Intuitively, integrating each side in (\ref{generatorduality}) should provide a duality at the level of semi-groups of the form:
\begin{equation*}
\imf{\mathbb{E}_z}{e^{-x \zmin_t}}=\imf{\mathbb{E}_x}{e^{-z U_t}}.\end{equation*}The study of the boundaries $0$ and $\infty$ of $(U_t,t\geq 0)$ would then provide the nature of boundaries $\infty$ and $0$ of $(\zmin_t,t\geq 0)$. However, there is not a unique semi-group associated to $\mathscr{A}$ as several boundary conditions are possible. Some precautions are then needed while showing the above duality. We gather in this section, the boundary conditions of the diffusion. The proofs of the following statements are rather technical and postponed in the Appendix.
\begin{lem}[Boundaries]\label{entranceU} The boundaries $0$ and $\infty$ of the diffusion with generator $\mathscr{A}$ are classified as follows:
\begin{itemize}
\item[1)] The boundary $0$ is an exit if
$\mathcal{E}=\int_{0}^\theta\frac{1}{x}\exp\left(-\frac{2}{c}\int_{x}^{\theta}\frac{\Psi(u)}{u}\ddr u\right)\ddr x=\infty$, regular if $\mathcal{E}<\infty$ and $0\leq \frac{2\lambda}{c}<1$, and  an entrance if  $\frac{2\lambda}{c}\geq 1$.
\item[2)] The boundary $\infty$ is inaccessible. It is an entrance boundary if $\int^{\infty}\frac{\ddr x}{\Psi(x)}<\infty$ and a natural one if $\int^{\infty}\frac{\ddr x}{\Psi(x)}=\infty$.
\end{itemize}
\end{lem}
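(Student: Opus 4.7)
The plan is to apply Feller's boundary classification to the one-dimensional diffusion $U$ on $(0,\infty)$ with generator $\mathscr{A}f(y) = \frac{c}{2}yf''(y) - \Psi(y)f'(y)$. Putting $\mathscr{A}$ in scale/speed form, the scale density is $s(y)=\exp\!\left(\int_\theta^y \tfrac{2\Psi(u)}{cu}\,du\right)$ and the speed density is $m(y)=\tfrac{1}{cy\,s(y)}$. An immediate computation identifies $\mathcal{E}=c\int_0^\theta m(y)\,dy$, so $\mathcal{E}<\infty$ is exactly the statement that the speed measure is finite near $0$.

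For the boundary $0$, I would first extract the rough asymptotics of $s$ at the origin. Since $\Psi(u)=-\lambda+o(1)$ as $u\to 0$, one has $\int_y^\theta \Psi(u)/u\,du = \lambda\log(y/\theta)+R(y)$ where $R(y)$ is a remainder, giving $s(y)\asymp (\theta/y)^{2\lambda/c}$ up to a subpolynomial factor; hence $\int_0 s<\infty \iff 2\lambda/c<1$. (When $\lambda=0$, the residual factor is $y^{o(1)}$ because $\Psi(u)/u=o(1/u)$, so integrability still holds.) The Feller dichotomy then classifies $0$ according to which of $\int_0 s$ and $\int_0 m$ is finite: regular if both are, exit if only $\int_0 s$ is, entrance if only $\int_0 m$ is. These correspond to the cases $\{\mathcal{E}<\infty,\ 2\lambda/c<1\}$, $\{\mathcal{E}=\infty\}$ (which forces $\lambda=0$), and $\{2\lambda/c\ge 1\}$ (which forces $\lambda>0$, hence $\mathcal{E}<\infty$) respectively. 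The natural case cannot occur since $\int_0 s=\infty$ requires $\lambda>0$ while $\int_0 m=\infty$ requires $\lambda=0$. In the borderline subcases one also needs the nested Feller integrals $\Sigma(0)=\int_0^\theta s(x)\int_x^\theta m(y)\,dy\,dx$ and $N(0)=\int_0^\theta m(x)\int_x^\theta s(y)\,dy\,dx$, which can be handled via Fubini together with the identity $sm=\tfrac{1}{cy}$.

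For the boundary $\infty$, I would first show that $\infty$ is always inaccessible by establishing that $\Sigma(\infty)=\int_\theta^\infty s(x)\int_\theta^x m(y)\,dy\,dx$ diverges (even in the subordinator case with positive drift, where $s$ is integrable near $\infty$ but $m$ grows exponentially). The entrance-versus-natural dichotomy is then governed by $N(\infty)=\int_\theta^\infty m(x)\int_\theta^x s(y)\,dy\,dx$. Via a change of variable and integration by parts, using that $s'/s=2\Psi/(cu)$, this double integral is comparable at infinity to $\int^\infty du/\Psi(u)$, yielding the stated criterion.

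The main obstacle is the analysis at $\infty$: the classification at $0$ reduces, through the explicit expression for $\mathcal{E}$ and the near-zero expansion of $\Psi$, to checking standard singular integrals, but translating $N(\infty)<\infty$ into Grey's condition $\int^\infty du/\Psi(u)<\infty$ requires extracting the leading asymptotics of nested integrals where $s$ and $m$ have super-exponential behavior. The cleanest route is probably to pass to the variable $v=\Psi(u)$ (or equivalently compare with $\Psi'/\Psi$) so that the double integral collapses onto a single one involving $1/\Psi$; this is the delicate computation I expect to defer to the appendix.
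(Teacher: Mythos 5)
Your proposal is correct and follows essentially the same route as the paper's appendix proof: Feller's tests with the same scale and speed densities, the identification of $\mathcal{E}$ with the speed measure mass near $0$, the behaviour of the scale density near $0$ governed by $2\lambda/c$ via $\Psi(0+)=-\lambda$, and the reduction of the nested integral at $\infty$ to $\int^{\infty}\frac{\ddr u}{\Psi(u)}$ by integration by parts using $s'/s=2\Psi/(cu)$. The computations you defer (finiteness of $\Sigma(0)$ when $\mathcal{E}=\infty$, of $N(0)$ when $2\lambda/c\geq 1$, and the two-sided comparison of $N(\infty)$ with $\int^{\infty}\frac{\ddr u}{\Psi(u)}$ --- the lower bound of which relies on the convexity of $\Psi$ and on ruling out $Q'(z)\to 0$) are exactly the ones carried out in the paper's appendix, so the sketch is complete in outline.
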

\noindent When $\mathcal{E}<\infty$, $0$ is regular and there are several possibilities for extending the minimal diffusion after $\tau$. In the next lemma, we denote by $(U^0_{t},t\geq 0)$ the diffusion (\ref{generalizedFellerdiffusionequation}) with either $0$ regular absorbing or exit. 

\begin{lem}[Exit law from $(0,\infty)$]\label{stationarylawlemma} Assume $0\leq \frac{2\lambda}{c}<1$. 
\begin{itemize} 
\item[1)] Assume there exists $z\geq 0$, such that $\Psi(z)\geq 0$ (-$\Psi$ is not the Laplace exponent of a subordinator), then for all $x\geq 0$, $\mathbb{P}_x(\underset{t\rightarrow \infty}{\lim} U^0_{t}=0)=1$.
\item[2)] 
Assume $\Psi$ of the form 
\begin{equation*} \Psi(v)=-\lambda-\delta v-\int_{0}^{\infty}(1-e^{-vu})\pi(\ddr u)
\end{equation*}
with $\delta\geq 0$ and $\int_{0}^{\infty}(1\wedge u)\pi(\ddr u)<\infty$. 
Recall the condition 
$$\textbf{(A)} \qquad  \delta=0 \text{ and } \bar{\pi}(0)+\lambda\leq c/2$$  
\begin{itemize}
\item[i)] If $\textbf{(A)}$ is satisfied then for all $x\geq 0$, $\mathbb{P}_x(\underset{t\rightarrow \infty}{\lim} U^0_{t}=0)=1$.
\item[ii)] If $\textbf{(A)}$ is not satisfied then for all $x\geq 0$, 
$$\mathbb{P}_x(\underset{t\rightarrow \infty}{\lim} U^0_{t}=0)=1-\mathbb{P}_x(\underset{t\rightarrow \infty}{\lim} U^0_{t}=\infty)=
\frac{\int_{x}^{\infty}\exp\left(\int_{\theta}^{y}\frac{2\Psi(z)}{cz}\ddr z\right)\ddr y}{\int_{0}^{+\infty}\exp\left(\int_{\theta}^{y}\frac{2\Psi(z)}{cz}\ddr z\right)\ddr y}.$$
\end{itemize}
\end{itemize}
\end{lem}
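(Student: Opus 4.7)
The proof is a classical scale-function analysis of the one-dimensional diffusion $(U_t^0,t\geq 0)$. The generator $\mathscr{A}f(x)=\frac{c}{2}xf''(x)-\Psi(x)f'(x)$ admits the scale function
$$s(y)=\int_0^y\exp\!\left(\int_\theta^z\frac{2\Psi(v)}{cv}\,\ddr v\right)\ddr z,$$
normalized so that $s(0)=0$; this is well-defined because $\Psi(v)\sim-\lambda$ near $0$, giving $s'(y)=O(y^{-2\lambda/c})$, which is integrable at $0$ under the standing hypothesis $2\lambda/c<1$. By Lemma \ref{entranceU}(2), $\infty$ is inaccessible, so $T_b\to\infty$ a.s. as $b\to\infty$; combined with absorption at $0$ for $U^0$, the standard identity $\mathbb{P}_x(T_a<T_b)=(s(b)-s(x))/(s(b)-s(a))$ for $0<a<x<b$ yields, on letting $a\to 0^+$ and $b\to+\infty$,
$$\mathbb{P}_x(T_0<\infty)=\frac{s(\infty)-s(x)}{s(\infty)-s(0)}.$$
On $\{T_0<\infty\}$ one has $U^0_t\to 0$; on $\{T_0=\infty\}$ the bounded-below local martingale $s(U^0_t)$ converges by the supermartingale convergence theorem, and a standard 1D non-degeneracy argument inside $(0,\infty)$ rules out convergence to an interior point, forcing $U^0_t\to 0$ or $\to\infty$.

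Everything thus reduces to the dichotomy $s(\infty)=+\infty$ (giving $\mathbb{P}_x(T_0<\infty)=1$) versus $s(\infty)<+\infty$. For case (1), convexity of $\Psi$ together with the existence of some $z_0$ with $\Psi(z_0)\geq 0$ implies $\Psi(v)\geq K>0$ for $v$ large, so $\int_\theta^\infty 2\Psi(v)/(cv)\,\ddr v=+\infty$ and $s(\infty)=\infty$; this settles (1). For case (2) the mechanism takes the subordinator form, and I write $|\Psi(v)|=\lambda+\delta v+g(v)$ with $g(v):=\int_0^\infty(1-e^{-vu})\pi(\ddr u)$. Splitting $\pi$ into its restrictions to $(0,1]$ and $(1,\infty)$ and using $1-e^{-vu}\leq 1\wedge vu$ gives $g(v)/v\to 0$, so $|\Psi(v)|/v\to\delta$. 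Consequently $s'(y)$ decays exponentially when $\delta>0$; when $\delta=0$ and $\bar\pi(0)<\infty$ monotone convergence yields $g(v)\to\bar\pi(0)$, so $s'(y)\sim C y^{-2(\lambda+\bar\pi(0))/c}$; when $\delta=0$ and $\bar\pi(0)=+\infty$, for every $M$ eventually $g(v)\geq M$, hence $s'(y)$ decays faster than $y^{-2M/c}$ for any $M$. Matching the resulting exponent with $1$ shows $\int^\infty s'(y)\,\ddr y=+\infty$ precisely when condition \textbf{(A)} holds.

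This yields (2)(i) immediately: under \textbf{(A)}, $s(\infty)=\infty$ forces $U^0_t\to 0$ a.s. When \textbf{(A)} fails, $s(\infty)<\infty$, and substituting $s(0)=0$ and $s(\infty)-s(x)=\int_x^\infty\exp(\int_\theta^y\frac{2\Psi(z)}{cz}\,\ddr z)\,\ddr y$ into the hitting identity produces the exact quotient stated in (2)(ii); on its complementary event the supermartingale $s(U^0_t)$ converges to $s(\infty)<\infty$, so $U^0_t\to+\infty$. The delicate technical point will be the asymptotic analysis of $g(v)$ at $+\infty$: in the sub-case $\bar\pi(0)=+\infty$ one has to quantify the rate at which $g$ diverges to infer $s(\infty)<\infty$, and in the boundary case $\delta=0,\ \lambda+\bar\pi(0)=c/2$ one must verify that the approximation $s'(y)\sim C/y$ is robust against the logarithmic correction $\int_\theta^y(g(v)-\bar\pi(0))/v\,\ddr v$, so that $s(\infty)=+\infty$ still holds. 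The remainder is routine bookkeeping with the scale function.
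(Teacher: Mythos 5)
Your overall strategy is the same as the paper's: both proofs are scale-function analyses of $(U^0_t,t\geq 0)$ reducing everything to whether $s(\infty)$ is finite, with part (1) settled by $\liminf_{v\rightarrow\infty}\Psi(v)/v>0$ (the paper) or equivalently $\Psi(v)\geq K>0$ for large $v$ (you), and part (2) settled by the asymptotics of $s'(y)=\exp\left(\int_\theta^y\frac{2\Psi(z)}{cz}\,\ddr z\right)$. Your handling of the cases $\delta>0$ and $\delta=0,\ \bar{\pi}(0)=\infty$ is correct, and in the second of these your direct argument ($g(v)\geq M$ eventually, for every $M$) is somewhat cleaner than the paper's truncation of $\pi$ into $\pi_b$.

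The one step stated too strongly is the claim $s'(y)\sim Cy^{-2(\lambda+\bar{\pi}(0))/c}$ when $\delta=0$ and $\bar{\pi}(0)<\infty$. Writing $\bar{\pi}(0)-g(v)=\int_0^\infty e^{-vu}\pi(\ddr u)$, the correction $\int_\theta^y\frac{\bar{\pi}(0)-g(v)}{v}\,\ddr v$ converges as $y\to\infty$ only when $\int_0|\ln u|\,\pi(\ddr u)<\infty$, which a finite measure need not satisfy (take $\pi=\sum_n n^{-2}\delta_{e^{-n}}$); when it fails, no constant $C$ exists and the asymptotic equivalence is false. What does hold, and suffices, is that this correction is $o(\ln y)$ (apply dominated convergence to $\frac{1}{\ln(y/\theta)}\int_0^\infty\min\left(\int_{\theta u}^\infty\frac{e^{-w}}{w}\ddr w,\ln(y/\theta)\right)\pi(\ddr u)$, using that $\pi$ is finite), giving $s'(y)=y^{-2(\lambda+\bar{\pi}(0))/c+o(1)}$; combined with the sign of the correction, which gives the unconditional lower bound $s'(y)\geq(y/\theta)^{-2(\lambda+\bar{\pi}(0))/c}$, this settles the dichotomy including the borderline $\lambda+\bar{\pi}(0)=c/2$, exactly the two delicate points you flag at the end. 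Be aware that this estimate is not "routine bookkeeping": it is the technical heart of the paper's proof, which establishes the finite limit $k_\theta(x)$ in (\ref{ktheta}) and then, to obtain $s(\infty)<\infty$, lower-bounds the correction by $\bar{\pi}(x)e^{-\theta x}\ln y$ and chooses $x$ small enough that $\lambda+\bar{\pi}(x)e^{-\theta x}>c/2$. So your proof is completable along the lines you sketch, but the asymptotic $\sim Cy^{-\alpha}$ as written must be replaced by the exponent estimate, and the deferred verification is the only genuinely nontrivial part of the argument.
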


\subsection{Duality and entrance law.}
In this section, we assume $\mathcal{E}=\infty$. Recall that it ensures the inaccessibility of $\infty$ for the process $(\zmin_t,t\geq 0)$ and that $0$ is an exit for the diffusion $(U_t,t\geq 0)$.

\begin{lem}[Duality lemma]\label{dualnonexplosive}
Assume $\mathcal{E}=\infty$. For all $z\in [0,\infty)$ and $x\in (0,\infty)$, 
the following duality holds $$\mathbb{E}_{z}[e^{-x\zmin_t}]=\mathbb{E}_{x}[e^{-zU_t}].$$
\end{lem}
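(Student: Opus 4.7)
The strategy is the classical Markovian duality argument: integrate the pointwise identity of Lemma~\ref{generatordualitylemma} against both semigroups. On a product probability space, let $(\zmin_s)_{s\geq 0}$ and $(U_r)_{r\geq 0}$ be independent, started respectively from $z$ and $x$. Fix $t>0$ and set
$$F(s):=\mathbb{E}\bigl[e^{-\zmin_s U_{t-s}}\bigr],\qquad s\in[0,t].$$
By independence $F(0)=\mathbb{E}_x[e^{-zU_t}]$ and $F(t)=\mathbb{E}_z[e^{-x\zmin_t}]$, so it suffices to show that $F'\equiv 0$ on $[0,t]$.

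Write $F(s)=\mathbb{E}_z[\psi(t-s,\zmin_s)]$ with $\psi(r,y):=\mathbb{E}_x[e^{-yU_r}]$. Under $\mathcal{E}=\infty$ the boundary $0$ is an exit of $U$ (Lemma~\ref{entranceU}) and $U$ is the continuous semimartingale solving (\ref{GFD}); It\^o's formula applied to $e^{-yU_r}$ thus gives the Kolmogorov identity
$$\partial_r\psi(r,y)=\mathbb{E}_x\bigl[\mathscr{A}e_y(U_r)\bigr].$$
Since $y\mapsto\psi(r,y)$ is the Laplace transform of $U_r$ it is smooth in $y$, and commuting $\mathscr{L}_y$ with the expectation yields
$$\mathscr{L}_y\psi(r,y)=\mathbb{E}_x\bigl[(\mathscr{L}e_{U_r})(y)\bigr].$$
Combining with the forward identity $\partial_s\mathbb{E}_z[f(\zmin_s)]=\mathbb{E}_z[\mathscr{L}f(\zmin_s)]$ applied to $f=\psi(t-s,\cdot)$, the derivative of $F$ decomposes as
$$F'(s)=\mathbb{E}_z\!\left[(\mathscr{L}_y\psi(t-s,y))\big|_{y=\zmin_s}\right]-\mathbb{E}_z\!\left[\partial_r\psi(r,\zmin_s)\big|_{r=t-s}\right].$$
By Lemma~\ref{generatordualitylemma}, $(\mathscr{L}e_{U_{t-s}})(\zmin_s)=(\mathscr{A}e_{\zmin_s})(U_{t-s})$ pointwise, so the two terms coincide under the inner expectation and $F'(s)=0$.

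To make this rigorous, the missing piece is the extension of the martingale problem \textbf{(MP)} from $C^2_c((0,\infty))$ to the exponentials $e_y$ (and hence to $\psi(t-s,\cdot)$), which is what feeds the forward identity used above. A direct computation gives
$$\mathscr{L}e_y(z)=ze^{-yz}\bigl(\Psi(y)+\tfrac{c}{2}yz\bigr),$$
and this is uniformly bounded in $z\in[0,\infty)$ because $ze^{-yz}$ and $z^2e^{-yz}$ are. I would then approximate $e_y$ by a sequence of $C^2_c((0,\infty))$ cut-offs coinciding with $e_y$ on $[1/n,n]$, stop the process at $\tau_n:=\inf\{s\geq 0:\zmin_s\notin[1/n,n]\}$, and pass to the limit. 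Non-explosion, ensured by $\mathcal{E}=\infty$ (Lemma~\ref{explosionlemma}), gives $\tau_n\uparrow\infty$ almost surely, while the boundedness of $\mathscr{L}e_y$ supplies the dominated convergence needed to upgrade the resulting local martingale to a true one.

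The main obstacle is precisely this upgrade step, and more specifically a uniform control as $n\to\infty$ of the large-jump contribution
$$z\int_0^\infty\!\bigl(e^{-y(z+u)}-e^{-yz}+yue^{-yz}\mathbbm{1}_{u\leq 1}\bigr)\,\pi(\ddr u),$$
which enters $\mathscr{L}e_y(z)$; one must also justify the interchange of $\mathscr{L}_y$ and $\mathbb{E}_x$ in the computation of $\mathscr{L}_y\psi(r,y)$. Both points hinge on the boundedness of $ze^{-yz}\Psi(y)$ and $z^2e^{-yz}$ for $y>0$, together with non-explosion. Once this is handled, Lemma~\ref{generatordualitylemma} reduces the remainder of the proof to the one-line cancellation above.
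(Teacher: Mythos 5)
Your overall strategy -- interpolating $F(s)=\mathbb{E}\bigl[e^{-\zmin_s U_{t-s}}\bigr]$ and killing $F'$ with the generator identity of Lemma \ref{generatordualitylemma} -- is exactly the one the paper implements (via Ethier--Kurtz, Corollary 4.15), so the approach is right. But you have located the difficulty in the wrong place, and the step you gloss over is the actual content of the proof. The $Z$-side is the easy half: since $\mathscr{L}e_x(z)=ze^{-xz}\bigl(\Psi(x)+\tfrac{c}{2}xz\bigr)$ is bounded in $z$ for each \emph{fixed} $x>0$, the time-changed local martingale $e^{-x\zmin_t}-\int_0^t\mathscr{L}e_x(\zmin_s)\,\ddr s$ has bounded paths on $[0,t]$ and is a true martingale outright; there is no ``uniform control of the large-jump contribution'' to worry about, because the jump integral is already summed exactly into the factor $\Psi(x)$. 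The genuine obstacle is on the $U$-side: to justify $F'\equiv 0$ (equivalently, condition (4.50) of Ethier--Kurtz) one needs integrability of $\sup_{s,r\leq T}|g(\zmin_s,U_r)|$ with $g(z,u)=\Psi(u)ze^{-uz}+\tfrac{c}{2}uz^2e^{-uz}$, and every bound on $g$ degenerates as $u\to 0$ (for instance $\sup_z \tfrac{c}{2}uz^2e^{-uz}=2c/(e^2u)$, and $\sup_z|\Psi(u)|ze^{-uz}=|\Psi(u)|/(eu)$ need not stay bounded near $0$). This is not a removable technicality: under $\mathcal{E}=\infty$ the boundary $0$ is an \emph{exit} for $U$, so $U$ genuinely reaches $0$ before time $T$ with positive probability, and your claimed boundedness ``because $ze^{-yz}$ and $z^2e^{-yz}$ are bounded'' only holds for $y$ bounded away from $0$.

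The paper resolves this by running the duality only up to $\tau_\epsilon=\inf\{t:U_t\leq\epsilon\}$, where one has $|g(z,u)|\leq \frac{b_\epsilon}{\epsilon}u^2+\frac{c}{\epsilon}$ and proves $\mathbb{E}_x\bigl[\sup_{s\leq T}U^2_{s\wedge\tau_\epsilon}\bigr]<\infty$ by comparison with a supercritical Feller diffusion and Doob's inequality. The price is an explicit boundary correction, which the martingale property of $M^Z$ identifies as $e^{-\epsilon z}-\mathbb{E}_z\bigl[e^{-\epsilon \zmin_{t-t\wedge\tau_\epsilon}}\bigr]$; letting $\epsilon\to 0$ this converges to $1-\mathbb{P}_z\bigl(\zmin_{t-t\wedge\tau_0}<\infty\bigr)$, which vanishes \emph{precisely because $\zmin$ does not explode}. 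That is where the hypothesis $\mathcal{E}=\infty$ enters in an essential way -- not merely through $\tau_n\uparrow\infty$ in a localization on the $Z$-side, as in your sketch. Without the stopping at $\tau_\epsilon$, the moment bound for $U$, and the identification and vanishing of this correction term, the one-line cancellation at the end of your argument is not justified.
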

\begin{proof}
Recall $(R_t,t\geq 0)$ the Ornstein-Uhlenbeck type process. For any $x>0$, by It\^o's formula, one sees that the process $$\left(e^{-xR_t}-\int_{0}^{t}\mathscr{L}^{R}e_{x}(R_s)\ddr s,t\geq 0\right)$$
is a local martingale. By time-changing it, we obtain that 
$$(M^{Z}_t,t\geq 0):=\left(e^{-x\zmin_t}-\int_{0}^{t}\mathscr{L}e_{x}(\zmin_s)\ddr s,t\geq 0\right),$$
is a local martingale. Since $x>0$, then $z\mapsto \mathscr{L}e_{x}(z)$ is bounded and $(M^Z_t,t\geq 0)$ is a martingale. Consider now $(U_t,t\geq 0)$ a $\Psi$-generalized Feller diffusion independent of $(\zmin_t,t\geq 0)$. By applying It\^ o's formula, we have that for any $z\geq 0$, and $\epsilon>0$;
$$(M^{U}_t,t\geq 0):=\left(e^{-zU_{t\wedge \tau_{\epsilon}}}-\int_{0}^{t\wedge \tau_{\epsilon}}\mathscr{A}e_{z}(U_s)\ddr s, t\geq 0\right)$$
is a martingale, with $\tau_\epsilon:=\inf\{t\geq 0, U_{t}\leq \epsilon\}$.  Recall the generator duality in Lemma \ref{generatordualitylemma}, $\mathscr{A}e_{z}(x)=\mathscr{L}e_{x}(z)$ and set $g(z,x):=\mathscr{L}e_{x}(z)$. We apply Ethier-Kurtz's duality result \cite[Corollary 4.15 page 196]{EthierKurtz} (with $\alpha=\beta=0$, $\tau=\infty$, $\sigma=\tau_\epsilon$). Provided that their condition (4.50) holds, we obtain, for $x\geq \epsilon$
\begin{align*}
\mathbb{E}_{z}[e^{-x\zmin_t}]-\mathbb{E}_x[e^{-zU_{t\wedge \tau_{\epsilon}}}]&=\int_{0}^{t}\mathbb{E}\left[\mathbbm{1}_{t-s>\tau_{\epsilon}}g(\zmin_{s},U_{(t-s)\wedge \tau_{\epsilon}})\right]\ddr s\\
&=\mathbb{E}\left[\int_{0}^{t-\tau_{\epsilon}\wedge t}\mathscr{L}e_\epsilon(\zmin_{s})\ddr s\right].
\end{align*}
From the last equality, it comes
\begin{align*}
\mathbb{E}_x[e^{-zU_{t\wedge \tau_{\epsilon}}}]-\mathbb{E}_{z}[e^{-x\zmin_t}]
&=-\mathbb{E}\left[\int_{0}^{t-\tau_{\epsilon}\wedge t}\mathscr{L}e_\epsilon(\zmin_{s})\ddr s\right]=e^{-\epsilon z}-\mathbb{E}_{z}[e^{-\epsilon \zmin_{t-t\wedge \tau_{\epsilon}}}],
\end{align*}
where we have obtained the last equality using the martingale $(M^Z_t,t\geq 0)$ conditionally given $\tau_\epsilon$ since $\tau_{\epsilon}$ is independent of $(\zmin_t,t\geq 0)$. By letting $\epsilon$ to $0$, $\tau_{\epsilon}\underset{\epsilon \rightarrow 0}{\longrightarrow} \tau_0$ a.s and the last equality provides
$$\mathbb{E}_x[e^{-zU_{t\wedge \tau_{\epsilon}}}]-\mathbb{E}_{z}[e^{-x\zmin_t}]\underset{\epsilon \rightarrow 0}{\longrightarrow} 1-\mathbb{P}_{z}(\zmin_{t-t\wedge \tau_0}<\infty).$$
We know that under the condition $\mathcal{E}=\infty$ the process $(\zmin_t,t\geq 0)$ does not explode. Therefore the limit above is $0$ and 
$\mathbb{E}_{z}[e^{-x\zmin_t}]=\mathbb{E}_x[e^{-zU_{t\wedge \tau_{0}}}]$ for all $x>0$ and $z\in [0,\infty)$. On the other hand, under the condition $\mathcal{E}=\infty$, $0$ is an exit of the diffusion and thus
$$\mathbb{E}_{z}[e^{-x\zmin_t}]=\mathbb{E}_x[e^{-zU_{t}}].$$
It remains to verify the technical condition (4.50) in Ethier-Kurtz \cite{EthierKurtz} page 192. Namely, for any  $T>0$ and $\epsilon>0$ fixed, we need to show that  the random variables $\sup_{s,t\leq T} \exp\left(-U_{s\wedge \tau_{\epsilon}}\zmin_{t}\right)$ and $$\sup_{s,t\leq T} |g(\zmin_{t},U_{s\wedge \tau_{\epsilon}})|,\text{ where } g(z,u)=\Psi(u)ze^{-uz}+\frac{c}{2}uz^2e^{-uz}$$
are integrable. Since $\sup_{s,t\leq T} \exp\left(-U_{s\wedge \tau_{\epsilon}}\zmin_{t}\right)$ is clearly bounded by $1$, we only need to focus on $\sup_{s,t\leq T} |g(\zmin_{t},U_{s\wedge \tau_{\epsilon}})|$.
For any mechanism $\Psi$, if $u\geq \epsilon>0$, then $|\Psi(u)|\leq b_\epsilon u^2$ for some $b_\epsilon>0$. Recall $x\geq \epsilon>0$. For all $s\geq 0$, $U_{s\wedge \tau_{\epsilon}}\geq \epsilon$ a.s. under $\mathbb{P}_x$, therefore
\begin{align*}
|g(\zmin_{t},U_{s\wedge \tau_{\epsilon}})|&=\lvert \Psi(U_{s\wedge \tau_{\epsilon}})\zmin_te^{-U_{s\wedge \tau_{\epsilon}}\zmin_t}+\frac{c}{2}U_{s\wedge \tau_\epsilon}(\zmin_t)^{2}e^{-U_{t\wedge \tau_\epsilon}\zmin_t}\lvert\\
&\leq b_\epsilon U_{s\wedge \tau_{\epsilon}}^2\zmin_te^{-U_{s\wedge \tau_{\epsilon}} \zmin_t}+\frac{c}{2}U_{s\wedge \tau_\epsilon}(\zmin_t)^{2}e^{-U_{s\wedge \tau_\epsilon}\zmin_t}\\
&\leq b_\epsilon U_{s\wedge \tau_{\epsilon}}^2\zmin_te^{-\epsilon \zmin_t}+\frac{c}{U_{s\wedge \tau_\epsilon}}\\
&\leq \frac{b_\epsilon}{\epsilon}U^2_{s\wedge \tau_\epsilon}+\frac{c}{\epsilon}
\end{align*}
where in the second inequality we have used $uz^2e^{-uz}\leq \frac{2}{u}$ and in the third one, $ze^{-\epsilon z}\leq \frac{1}{\epsilon}$. We now argue by comparison in order to show that 
$\sup_{s\leq T}U_{s\wedge \tau_\epsilon}^2$ is integrable. When $u\geq \epsilon$, we have $\Psi(u)\geq \frac{\Psi(\epsilon)}{\epsilon}u\geq -\gamma_\epsilon u$ for some $\gamma_{\epsilon}>0$. Recall that $\Psi$ is locally Lipschitz on $(0,\infty)$. Applying the results of \cite[Section 3, Chapter IX]{MR1725357}, one can then construct, with the same Brownian motion $(B_t,t\geq 0)$, the process $(U_{t}, t\geq 0)$ as a strong solution to (\ref{generalizedFellerdiffusionequation}) with $0$ exit and the process $(V_t,t\geq 0)$ as a strong solution to
$$\ddr V_t=\sqrt{cV_t}\ddr B_t+\gamma_\epsilon V_t\ddr t, \quad V_0=x.$$
Both processes are adapted to the natural filtration of $(B_t,t\geq 0)$. Applying the comparison theorem  \cite[Theorem IX.3.7]{MR1725357} up to the stopping time $\tau_\epsilon$, one has that almost-surely for any $0\leq s \leq \tau_\epsilon$,  $U_{s}\leq V_{s}$. Note that $(V_t,t\geq 0)$ is a supercritical Feller diffusion with branching mechanism $\Phi(u)=\frac{c}{2}u^{2}+\gamma_\epsilon u$. It is easily checked that for any $t\geq 0$, $V_t$ has a second moment. Moreover, the process $(V_s,s\geq 0)$ is a submartingale and by Doob's inequality
$$\mathbb{E}_{x}\left[\sup_{s\leq T}V^2_s\right]\leq 4\mathbb{E}_{x}[V_T^{2}]<\infty.$$
Since for any $\epsilon>0$, $\underset{s\leq T\wedge \tau_\epsilon}{\sup} U^2_{s}\leq\underset{s\leq T}{\sup}\ V^2_{s}$, the proof is complete.
\end{proof}
Let $\paren{\pmin_t,t\geq 0}$ be the semigroup of $(\zmin_t,t\geq 0)$. Lemma \ref{explosionlemma} ensures that when $\mathcal{E}=\infty$, $\infty$ is inaccessible. To see that $\infty$ is an entrance boundary,  we show in the following lemmas how to define a Feller semigroup coinciding with $\paren{\pmin_t,t\geq 0}$ over $[0,\infty)$, with an entrance law from $\infty$.  
\begin{lem}
For any $t>0$, $x\mapsto \imf{\mathbb{P}_x}{U_t=0}$ is the Laplace transform of a certain probability measure $\eta_t$ over $[0,\infty)$. 
Moreover $\eta_t\to \eta_0:=\delta_\infty$ weakly as $t\to 0$. 
\label{entranceLawAtInfinityLemma}
\end{lem}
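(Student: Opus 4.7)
The plan is to identify $\phi_t(x) := \imf{\mathbb{P}_x}{U_t = 0}$ as the pointwise limit of the Laplace transforms $x \mapsto \mathbb{E}_z[e^{-x\zmin_t}]$ as $z \to \infty$, and then apply Bernstein's theorem together with a Laplace-transform continuity argument. By the duality in Lemma \ref{dualnonexplosive},
\[
\mathbb{E}_z[e^{-x\zmin_t}] \;=\; \mathbb{E}_x[e^{-zU_t}] \;=\; \phi_t(x)+\mathbb{E}_x[e^{-zU_t}\mathbf{1}_{U_t>0}] \qquad(z,x\geq 0),
\]
and since $0$ is exit (hence absorbing) for $(U_t,t\geq 0)$ under $\mathcal{E}=\infty$, monotone convergence shows that the second term decreases to $0$ as $z\to\infty$. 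Moreover under $\mathcal{E}=\infty$ the minimal process does not explode, so the laws of $\zmin_t$ under $\mathbb{P}_z$ are probability measures on $[0,\infty)$, and their Laplace transforms decrease pointwise to $\phi_t$.

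A pointwise limit of completely monotone functions is completely monotone; Bernstein's theorem therefore produces a positive measure $\eta_t$ on $[0,\infty)$ with $\phi_t(x)=\int_{[0,\infty)} e^{-xu}\,\eta_t(\ddr u)$ and total mass $\eta_t([0,\infty))=\phi_t(0+)$. The crux is to show $\phi_t(0+)=1$, i.e.\ that $\imf{\mathbb{P}_x}{\tau_0\leq t}\to 1$ as $x\to 0+$, where $\tau_0:=\inf\{s:U_s=0\}$. I would argue this via the $q$-potential $h_q(x):=\mathbb{E}_x[e^{-q\tau_0}]$, which is the unique bounded solution on $(0,\infty)$ of $\mathscr{A}h_q = q h_q$ with $h_q(0)=1$ and which can be written explicitly using the scale function and speed measure of $(U_t,t\geq 0)$; continuity of $h_q$ at $0$ (a one-dimensional diffusion computation, in the spirit of the boundary classification of Lemma \ref{entranceU}) gives $\mathbb{E}_x[e^{-q\tau_0}]\to 1$ for every $q>0$, hence $\tau_0\to 0$ in distribution under $\mathbb{P}_x$ as $x\to 0+$ by the Laplace continuity theorem, and therefore $\phi_t(x)\to 1$. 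This verification is the main obstacle and is the analytic counterpart, through the dual process $U$, of the coming-down-from-infinity phenomenon for $(\zmin_t,t\geq 0)$.

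For $\eta_t\to \delta_\infty$ weakly as $t\to 0+$: the paths of $(U_t,t\geq 0)$ are continuous and start at $x>0$, so $\tau_0>0$ almost-surely under $\mathbb{P}_x$ and $\phi_t(x)=\imf{\mathbb{P}_x}{\tau_0\leq t}\downarrow 0$ as $t\to 0+$, for every $x>0$. Viewing $\delta_\infty$ as a probability measure on the compactified half-line $[0,\infty]$, its Laplace transform is $x\mapsto \mathbf{1}_{\{x=0\}}$, and $\phi_t(0)=1$ for all $t$ since $\eta_t$ is a probability measure. The Laplace transforms of $\eta_t$ thus converge pointwise to that of $\delta_\infty$, and the continuity theorem for Laplace transforms on $[0,\infty]$ yields $\eta_t\to\delta_\infty$ weakly.
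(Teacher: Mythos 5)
Your proof is correct and follows essentially the same route as the paper: pass to the limit $z\to\infty$ in the duality identity of Lemma \ref{dualnonexplosive}, identify the limit as $\mathbb{P}_x(\tau_0\leq t)$, invoke a continuity/Bernstein-type theorem to produce the measure $\eta_t$, and use path continuity of $(U_t,t\geq 0)$ for the limit $t\to 0$. The only substantive difference is that you supply an argument (via the first-passage Laplace transform $h_q(x)=\mathbb{E}_x[e^{-q\tau_0}]$ and its continuity at the exit boundary $0$) for the key normalization $\mathbb{P}_x(\tau_0\leq t)\to 1$ as $x\to 0+$, which the paper simply asserts as $\mathbb{P}_{0+}(\tau_0\leq t)=1$.
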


\begin{proof}
By taking limits as $z\to\infty$ in the duality formula in Lemma \ref{dualnonexplosive}, one has: 
\begin{equation*}
\lim_{z\to\infty}\imf{\mathbb{E}_z}{e^{-x\zmin_t}}=\lim_{z\to\infty}\imf{\mathbb{E}_x}{e^{-zU_t}}=\imf{\mathbb{P}_x}{U_t=0}.
\end{equation*}Since $0$ is an exit thanks to the assumption $\mathcal{E}=\infty$, $\imf{\mathbb{P}_x}{U_t=0}=\imf{\mathbb{P}_x}{\tau_0\leq t}>0$. By L\'evy 's continuity theorem, $x\mapsto \imf{\mathbb{P}_x}{\tau_0\leq t}$ is the Laplace transform of a certain finite measure $\eta_t$ which is the weak limit of the law of $\zmin_t$ under $\mathbb{P}_z$ as $z\to\infty$. 
Moreover, $\underset{x\rightarrow 0}{\lim}\imf{\mathbb{P}_x}{\tau_0\leq t}=\imf{\mathbb{P}_{0+}}{\tau_0\leq t}=1$ 
and $\eta_t$ is a probability measure over $[0,\infty)$. 
By continuity of the paths of $(U_t,t\geq 0)$, if $x>0$, then $\underset{t\rightarrow 0}{\lim} \mathbb{P}_{x}(U_t=0)=0$, and if $x=0$ then $\underset{t\rightarrow 0}{\lim} \mathbb{P}_{x}(U_t=0)=1$. This entails that  $\eta_t\to\delta_\infty$ weakly as $t\to 0$.
\end{proof}

From now on, we will work with the following definition of $e_x$ over $[0,\infty]$. For any $x>0$, $e_{x}(z)=e^{-xz}$ for all $z\in [0,\infty]$ and $e_{0}(z)=1$ for all $z\in [0,\infty]$. Note that $e_{0+}(z):= \underset{x\rightarrow 0 \atop x>0}\lim e_x(z)=\mathbbm{1}_{\{z<\infty\}}.$
\begin{lem}\label{Zextendedentrance}
For any function $f\in C_b([0,\infty])$ and any $t\geq 0$, set $P_tf(z):=\pmin_tf(z)$ for any $z\in [0,\infty)$ and $P_tf(\infty):=\int_{0}^{\infty}f(u)\eta_{t}(\ddr u).$ 
This defines a Feller semigroup $\paren{P_t, t\geq 0}$ over $[0,\infty]$. 
Furthermore, if $(Z_t,t\geq 0)$ is a c\`adl\`ag Markov process with semigroup $(P_t,t\geq 0)$, and $T:=\inf\{t>0; Z_t<\infty\}$, then $\imf{\mathbb{P}_\infty}{T=0}=1$. 
\end{lem}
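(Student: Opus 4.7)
The plan is to leverage the duality of Lemma \ref{dualnonexplosive} together with the entrance law $\eta_t$ from Lemma \ref{entranceLawAtInfinityLemma} to verify the Feller and semigroup properties on the dense family of exponentials $\{e_x : x > 0\}$, and then deduce instantaneous entry at $\infty$ directly from the construction of $\eta_t$.

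First I would unify the duality and the definition of $\eta_t$ into a single identity on the one-point compactification: for each $x > 0$ and $t \geq 0$,
\[
P_t e_x(z) = \mathbb{E}_x\!\left[ e^{-zU_t} \right], \qquad z \in [0,\infty],
\]
with the convention $e^{-\infty\cdot u} = \mathbbm{1}_{\{u=0\}}$; at $z = \infty$ both sides equal $\mathbb{P}_x(U_t = 0) = \int e_x\, d\eta_t$ by definition of the entrance law. Since $z \mapsto e^{-zU_t(\omega)}$ is continuous on $[0,\infty]$ and bounded by $1$, dominated convergence gives $P_t e_x \in C([0,\infty])$. Because the linear span of $\{e_x : x \geq 0\}$ is a unital subalgebra of $C([0,\infty])$ that separates points, Stone--Weierstrass renders it dense, and combined with contractivity $\|P_t f\|_\infty \leq \|f\|_\infty$ and $P_t 1 = 1$ we obtain $P_t : C([0,\infty]) \to C([0,\infty])$.

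Next, for the semigroup property it suffices to check $P_{s+t} e_x = P_s P_t e_x$ on the dense family. Taking a copy of $(U_t)$ independent of $(\zmin_s)$ and setting $g_t(y) := \mathbb{E}_x[e^{-yU_t}]$, the pointwise duality in the variable $U_t$ combined with Fubini and the Markov property of $(U_t)$ yields
\[
P_s g_t(z) \;=\; \mathbb{E}_z\bigl[\mathbb{E}_x(e^{-U_t\,\zmin_s})\bigr] \;=\; \mathbb{E}_x\bigl[\mathbb{E}_{U_t}(e^{-zU_s})\bigr] \;=\; \mathbb{E}_x\bigl[e^{-zU_{t+s}}\bigr] \;=\; P_{t+s}e_x(z)
\]
for $z < \infty$, and the same identity at $z = \infty$ follows via $\mathbb{P}_x(U_{t+s}=0) = \mathbb{E}_x[\mathbb{P}_{U_t}(U_s=0)]$. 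Pointwise continuity at $t = 0$ reduces, by the same duality, to $\mathbb{E}_x[e^{-zU_t}] \to e^{-zx}$, which at finite $z$ is the right-continuity of $(U_t)$ and at $z = \infty$ is the weak convergence $\eta_t \to \delta_\infty$ already established. The main technical point is to promote pointwise to uniform convergence in order to secure strong continuity of the Feller semigroup; I would handle this by a standard $\varepsilon/3$ argument, leveraging density and the compactness of $[0,\infty]$ together with equicontinuity of the family $\{P_t e_x - e_x : t \in [0,1]\}$ that follows from $|e^{-zU_t}-e^{-zx}| \leq 2 \wedge z|U_t - x|$ and the path regularity of $(U_t)$.

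Finally, the instantaneous-entry statement is essentially immediate. By construction $\eta_t$ is carried by $[0,\infty)$, so $\mathbb{P}_\infty(Z_\epsilon < \infty) = \eta_\epsilon([0,\infty)) = 1$ for every $\epsilon > 0$; hence $\mathbb{P}_\infty(T \leq \epsilon) \geq \mathbb{P}_\infty(Z_\epsilon < \infty) = 1$, and letting $\epsilon \downarrow 0$ along a countable sequence gives $\mathbb{P}_\infty(T = 0) = 1$.
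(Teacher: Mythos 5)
Your proposal follows essentially the same route as the paper: duality plus the entrance law $\eta_t$ to define $P_t$ on the exponentials, Stone--Weierstrass to pass to all of $C_b([0,\infty])$, and the fact that $\eta_t$ is a probability on $[0,\infty)$ to get $\mathbb{P}_\infty(T=0)=1$. Two points of divergence are worth noting. For the Chapman--Kolmogorov identity at $z=\infty$ you route through the Markov property of $(U_t)$ and Fubini, $\mathbb{P}_x(U_{t+s}=0)=\mathbb{E}_x[\mathbb{P}_{U_t}(U_s=0)]$, whereas the paper simply takes $z\to\infty$ in $P_{t+s}f(z)=P_tP_sf(z)$ and uses that $P_sf\in C_b([0,\infty])$ to identify the limit with $\int P_sf\,\ddr\eta_t$; both are valid, and yours has the minor virtue of not needing the a priori continuity of $P_sf$ at that stage (though you should note that the duality identity with $u=0$ in place of $x$ is needed on the event $\{U_t=0\}$, where it holds trivially since $0$ is absorbing). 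For strong continuity, the paper stops at pointwise convergence $P_tf(z)\to f(z)$ and invokes the standard fact that, for a positive contraction semigroup on $C(E)$ with $E$ compact, pointwise convergence at $t=0$ already implies strong continuity; your attempt to prove uniform convergence directly is therefore unnecessary, and as stated it has a weakness: the bound $|e^{-zU_t}-e^{-zx}|\leq 2\wedge z|U_t-x|$ does not give equicontinuity uniformly in $z\in[0,\infty]$, since its supremum over $z$ equals $2$ whenever $U_t\neq x$. One can repair this (e.g.\ via $|e^{-a}-e^{-b}|\leq e^{-a\wedge b}|a-b|$ together with a split on $\{U_t\geq x/2\}$), but the cleaner course is to follow the paper and settle for pointwise convergence.
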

\begin{proof}
The subalgebra generated by the maps $\{e_x(\cdot),x\geq 0\}$ is separating $[0,\infty]$ (recall that $e_0(z)=1$ for any $z\in [0,\infty]$) and therefore by the Stone-Weierstrass theorem, is dense in $C_b([0,\infty])$ for the supremum norm. By duality, for any $x\geq 0$, 
$P_t e_x(z)=\mathbb{E}_{x}[e^{-zU_t}]$ when $z\in [0,\infty)$. The map $z\mapsto P_t e_x(z)$ is therefore continuous on $[0,\infty[$. The continuity at $z=\infty$ holds since by definition $P_t e_x(\infty)=\underset{z\rightarrow \infty}{\lim} P_t e_x(z)$.
By Stone-Weierstrass, $z\mapsto P_t f(z)$ is continuous on $[0,\infty]$ with any $f\in C_b([0,\infty])$. Hence $P_t C_b([0,\infty])\subset C_b([0,\infty])$. 
We show now that $(P_t,t\geq 0)$ is a semigroup. Since it coincides with the semigroup $(\pmin_t,t\geq 0)$ on $[0,\infty[$, then for any $s,t\geq 0$, any function $f\in C_b([0,\infty])$ and any $z\in [0,\infty)$ $\imf{P_{t+s}f}{z}=\imf{P_{t}P_sf}{z}$. For $z=\infty$, we have
\begin{equation*}
P_{t+s}f(\infty)=
\lim_{z\to\infty} \imf{P_{t+s}f}{z}
=\lim_{z\to\infty} \imf{P_{t}P_sf}{z}
=\int  \imf{P_sf}{y}\, \imf{\eta_t }{\ddr y}.
\end{equation*}The last equality above holds since $P_sf \in C_b([0,\infty])$. This provides $P_{t+s}f(\infty)=\imf{P_{t}P_sf}{\infty}$. It remains to justify the continuity of $(P_t, t\geq 0)$ at $0$. That is to say $P_tf(z)\underset{t\rightarrow 0}{\longrightarrow} f(z)$ for any $z\in [0,\infty]$ and any $f\in C_b([0,\infty])$. Since $(U_t,t\geq 0)$ is a diffusion (with continuous paths and with infinite life-time) then $t\mapsto P_t e_x(z)=\imf{\mathbb{E}_x}{e^{-z U_t}}$ is continuous, in particular continuous at $0$. Hence, $t\mapsto P_tf(z)$ is continuous at zero for any $z\in [0,\infty)$. For $z=\infty$, since $\eta_t \underset{t\rightarrow 0}{\longrightarrow} \delta_\infty$ weakly, then $P_tf(\infty)\underset{t\rightarrow 0}{\longrightarrow} f(\infty)$. 
This entails the Feller property of $(P_t,t\geq 0)$ (see e.g. \cite[Section 2, Chapter III]{MR1725357}), which ensures the  existence of a Markov process $(Z_t,t\geq 0)$ with semigroup $(P_t,t\geq 0)$ and c\`adl\`ag paths. We show now that $\infty$ is instantaneous. Since for every $t>0$, $\eta_t$ is a probability over $\mathbb{R}_+$, then
$\mathbb{P}_\infty(T<t)=\mathbb{P}_\infty(Z_t<\infty)=\eta_t(\mathbb{R}_+)=1.$
Letting $t$ to $0$ provides $\mathbb{P}_\infty(T=0)=1$.  
\end{proof}
%
%
We give in the next lemma, an alternative proof for the property of entrance at $\infty$, based on arguments that are not involving duality.

\begin{lem} Define $\zeta_a:=\inf\{t\geq 0; \zmin_t\leq a\}$ for any $a\geq 0$. 
For any large enough positive $a$, one has $\underset{z\geq a}{\sup }\ \mathbb{E}_z(\zeta_a)\leq \frac{4}{ca}$.
\end{lem}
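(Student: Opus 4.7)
The plan is to apply Dynkin's formula to the test function $g(z) := \frac{2}{cz}$, chosen so that the quadratic competition term $-\frac{c}{2}z^2 g'(z)$ in the generator $\mathscr{L}$ produces exactly the constant $1$. Since the standing assumption $\mathcal{E} = \infty$ of this section forces $\lambda = 0$, a direct computation yields
$$\mathscr{L} g(z) = 1 + \frac{2\sigma^2}{cz^2} + \frac{2\gamma}{cz} + \frac{2}{c}\int_0^1 \frac{u^2}{z(z+u)}\,\pi(\ddr u) - \frac{2}{c}\int_1^\infty \frac{u}{z+u}\,\pi(\ddr u).$$
Every term other than the leading $1$ tends to $0$ as $z\to\infty$: the small-jump integral is dominated by $z^{-2}\int_0^1 u^2\,\pi(\ddr u)$, and the large-jump integral is handled by dominated convergence, using $\bar\pi(1)<\infty$ for any L\'evy measure. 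Hence there exists $a_0 > 0$ such that $\mathscr{L} g(z) \geq \tfrac{1}{2}$ for every $z\geq a_0$.

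Next I would run Dynkin's formula up to $\zeta_a$. Because $g\notin C^2_c((0,\infty))$, I localize by choosing smooth truncations $g_n \in C^2_c((0,\infty))$ agreeing with $g$ on $[a,n]$ and dominated by $g(a)$, and by stopping at $t\wedge\zeta_a\wedge T_n$ with $T_n := \inf\{t\geq 0 : Z^{\min}_t > n\}$. Applying the martingale problem \textbf{(MP)} to $g_n$ then gives, for $z\geq a\geq a_0$,
$$\mathbb{E}_z\!\left[g(Z^{\min}_{t\wedge\zeta_a\wedge T_n})\right] - g(z) \geq \tfrac{1}{2}\,\mathbb{E}_z\!\left[t\wedge\zeta_a\wedge T_n\right].$$
Since $Z^{\min}$ has only positive jumps it crosses $a$ continuously, so $g(Z^{\min}_{t\wedge\zeta_a}) \leq g(a) = \tfrac{2}{ca}$; together with $g\geq 0$ the left-hand side is at most $\tfrac{2}{ca}$. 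Non-explosivity from Lemma \ref{explosionlemma} yields $T_n \uparrow \infty$ almost surely, and monotone convergence (first in $n$, then in $t$) delivers the uniform bound $\mathbb{E}_z[\zeta_a] \leq \tfrac{4}{ca}$.

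The main obstacle I expect is the localization: the discrepancy $\mathscr{L}g_n(z) - \mathscr{L}g(z)$ arising from the jump kernel probing values of $g_n$ outside $[a,n]$ must be made uniformly small on $[a,n]$. This reduces to the estimate $\int_{u > n-z} g(z+u)\,\pi(\ddr u) \leq g(n)\,\bar\pi(1) \to 0$ as $n\to\infty$, for $z\leq n-1$, which lets us choose $n$ so large that $\mathscr{L}g_n \geq \tfrac{1}{2}-o(1)$ on the relevant bulk of $[a,n]$. Once this technicality is handled, the asymptotic identity $\mathscr{L}g(z)\to 1$ together with the Dynkin step produces the claimed bound, and the key conceptual ingredient is really the observation that the quadratic competition alone produces the time-scale $2/(cz)$ visible in the answer.
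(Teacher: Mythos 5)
Your proof is correct and follows essentially the same route as the paper: the paper uses the test function $h(z)=1/z$ (yours is $g=\tfrac{2}{c}h$), makes the identical observation that the competition term contributes the constant $\tfrac{c}{2}$ while the branching part $\mathscr{G}h(z)$ vanishes as $z\to\infty$, and then applies optional stopping with a localizing sequence exactly as you do, arriving at $\mathbb{E}_z(\zeta_a)\leq\tfrac{4}{c}\bigl(\tfrac{1}{a}-\tfrac{1}{z}\bigr)\leq\tfrac{4}{ca}$. Your treatment of the localization (truncations $g_n$ and the tail estimate for the jump kernel) is merely a more explicit version of the paper's appeal to a localizing sequence, and your remark that $\mathcal{E}=\infty$ forces $\lambda=0$ is correctly used implicitly in the paper as well.
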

\begin{proof}
Recall $\mathscr{G}$ the generator of a CSBP with mechanism $\Psi$. Let $h(z)=\frac{1}{z}$. One has 
\begin{align*}
\mathscr{G}h(z)&=\frac{\sigma^2}{z^2}+\frac{\gamma}{z}+\int_{0}^{\infty} z\left( \frac{1}{z+h}-\frac{1}{z}+\mathbbm{1}_{\{h\leq 1\}}h\frac{1}{z^2}\right)\pi(\ddr h)\\
&=\frac{\sigma^2}{z^2}+\frac{\gamma}{z}-\int_{1}^{\infty}\frac{h}{z+h}\pi(\ddr h)+\int_{0}^{1}h\left(\frac{1}{z}-\frac{1}{z+h}\right)\pi(\ddr h).
\end{align*}
By Lebesgue's theorem, $\mathscr{G}h(z)\underset{z\rightarrow \infty}{\longrightarrow} 0.$ Since $\mathscr{L}h(z)=\mathscr{G}h(z)+\frac{c}{2}$, then there exists $a>0$ such that for all $z\geq a$, $\mathscr{L}h(z)\geq \frac{c}{4}$. Since by assumption, $\mathcal{E}=\infty$, the process $(\zmin_t,t\geq 0)$ does not explode and there exists a localizing sequence of stopping times $(T_m, m\geq 1)$ such that $T_m \underset{m\rightarrow \infty}{\longrightarrow} \infty$ almost-surely and $(M_{t\wedge T_m}, t\geq 0)$ is a bounded martingale, where
\[M_t=h(\zmin_t)-\int_0^t\mathscr{L}h(\zmin_s)\ddr s.\]
By the optional stopping theorem, $\mathbb{E}_z[M_{\zeta_{a}\wedge T_m}]=h(z)$ and we obtain, letting $m$ to $\infty$
$$\mathbb{E}_z[h(\zmin_{\zeta_{a}})]-h(z)=\frac{1}{a}-\frac{1}{z}=\mathbb{E}_z\left[\int_{0}^{\zeta_{a}}\mathscr{L}h(\zmin_s)\ddr s\right]\geq \frac{c}{4}\mathbb{E}_z(\zeta_a).$$
We conclude that  $\mathbb{E}_z(\zeta_a)\leq \frac{4}{c}\left(\frac{1}{a}-\frac{1}{z}\right)$ for any $z\geq a$. The entrance property can be deduced by following the proof of Kallenberg \cite[Theorem 23.13]{Kallenberg}.
\end{proof}
\subsection{Longterm behavior and stationarity}
We now show Corollary \ref{zeroboundary}, Corollary \ref{StationarityZ} and Theorem \ref{longtermtheo} in the case $\mathcal{E}=\infty$. 
\begin{lem}[Corollary \ref{zeroboundary}: accessibility of $0$] \label{0access} Let $\zeta_0:=\inf\{t>0; Z_t=0\}$. If $\int^{\infty}\frac{\ddr u}{\Psi(u)}<\infty$ then for any $z\geq 0$, $\mathbb{P}_z(\zeta_0<\infty)>0$. If $\int^{\infty}\frac{\ddr u}{\Psi(u)}=\infty$ then for any $z>0$, $\mathbb{P}_z(\zeta_0=\infty)=1$.
\end{lem}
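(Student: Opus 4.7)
My plan is to pass to the limit $x\to\infty$ in the duality identity of Lemma~\ref{dualnonexplosive} and to translate the boundary behaviour of $U$ at $\infty$ given in Lemma~\ref{entranceU}(2) into information about the first hitting time of $0$ for $Z$. Since $0$ is absorbing for the minimal process, the event $\{\zmin_t=0\}$ coincides with $\{\zeta_0\leq t\}$, so monotone convergence combined with the duality yields, for every $z\geq 0$ and $t\geq 0$,
\[\mathbb{P}_z(\zeta_0\leq t)=\lim_{x\to\infty}\mathbb{E}_z\bigl[e^{-x\zmin_t}\bigr]=\lim_{x\to\infty}\mathbb{E}_x\bigl[e^{-zU_t}\bigr].\]

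To identify this last limit I would invoke the pathwise comparison theorem (Revuz--Yor \cite[Thm.~IX.3.7]{MR1725357}), whose hypotheses are met since $\Psi$ is locally Lipschitz on $(0,\infty)$ and $u\mapsto\sqrt{cu}$ is $\tfrac12$-H\"older. This gives monotonicity of $U_t^x$ in the initial condition $x$, so that the pathwise limit $U_t^\infty:=\lim_{x\to\infty}U_t^x\in[0,\infty]$ exists almost surely, and bounded convergence yields
\[\mathbb{P}_z(\zeta_0\leq t)=\mathbb{E}\bigl[e^{-zU_t^\infty}\bigr],\qquad z>0,\ t\geq 0,\]
with the convention $e^{-z\cdot\infty}=0$ for $z>0$.

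The dichotomy is then read off from Lemma~\ref{entranceU}(2). When $\int^\infty \ddr u/\Psi(u)<\infty$, $\infty$ is an entrance for $U$: the monotone limit $U_t^\infty$ coincides with the minimal diffusion started from $\infty$ and is a.s.\ finite for every $t>0$, so the right-hand side above is strictly positive, whence $\mathbb{P}_z(\zeta_0<\infty)>0$ for every $z>0$ (the case $z=0$ is trivial since $\zeta_0=0$ under $\mathbb{P}_0$). When $\int^\infty \ddr u/\Psi(u)=\infty$, $\infty$ is natural for $U$: the same monotone construction is stuck at $\infty$, i.e.\ $U_t^\infty=\infty$ a.s., so the right-hand side vanishes, giving $\mathbb{P}_z(\zeta_0\leq t)=0$ for every $t\geq 0$ and $z>0$, and hence $\mathbb{P}_z(\zeta_0=\infty)=1$.

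The only delicate point is the identification of $U_t^\infty$ as a.s.\ finite versus a.s.\ infinite according to the integral test on $1/\Psi$ at infinity. This is exactly Feller's classification applied to the scale and speed data of \eqref{GFD} and is precisely the content of Lemma~\ref{entranceU}(2), so it can be invoked as a black box here.
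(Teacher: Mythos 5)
Your proposal is correct and follows essentially the same route as the paper: pass to the limit $x\to\infty$ in the duality of Lemma \ref{dualnonexplosive}, identify $\mathbb{P}_z(\zeta_0\leq t)$ with the entrance law of $U$ from $\infty$, and conclude from the entrance/natural dichotomy of Lemma \ref{entranceU}(2). The only difference is that you make explicit, via the comparison theorem, the monotone construction of $U_t^\infty$ that the paper leaves implicit in the notation $\mathbb{E}_\infty[e^{-zU_t}]$.
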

\begin{proof}
For all $z\in \mathbb{R}_+$, $$\mathbb{P}_z(\zeta_0\leq t)=\underset{x\rightarrow \infty}{\lim}\mathbb{E}_{z}[e^{-xZ_t}]=\underset{x\rightarrow \infty}{\lim}\mathbb{E}_{x}[e^{-zU_t}]=
\mathbb{E}_{\infty}[e^{-zU_t}].$$
Therefore $\mathbb{P}_z(\zeta_0\leq t)>0$ if and only if $\mathbb{E}_{\infty}[e^{-zU_t}]>0$. By Lemma \ref{entranceU}, $\infty$ is an entrance boundary of the diffusion $(U_t, t\geq 0)$ if and only if $\int^{\infty}\frac{\ddr u}{\Psi(u)}<\infty$. 
\end{proof}
\begin{lem}[Corollary \ref{StationarityZ}: stationarity]\label{stationarityandalmostsureabsorptionEinfini} If the assumption $\textbf{(A)}$ (with $\lambda=0$) is not satisfied then for all $x\geq 0$
$$\mathbb{E}_z\left(e^{-xZ_t}\right)\underset{t\rightarrow \infty}{\longrightarrow} L(x):= \frac{\int_{x}^{\infty}\exp\left(\int_{\theta}^{y}\frac{2\Psi(z)}{cz}\ddr z\right)\ddr y}{\int_{0}^{\infty}\exp\left(\int_{\theta}^{y}\frac{2\Psi(z)}{cz}\ddr z\right)\ddr y}.$$
Moreover $L$ is the Laplace transform of a probability measure carried over $(\frac{2\delta}{c},\infty)$, where $\delta=-\underset{u\rightarrow \infty}{\lim}\frac{\Psi(u)}{u}$.  If the assumption $\textbf{(A)}$ is satisfied or $-\Psi$ is not the Laplace exponent of a subordinator then for all $x\geq 0$,
$$\mathbb{E}_z\left(e^{-xZ_t}\right)\underset{t\rightarrow \infty}{\longrightarrow} 1.$$ 
\end{lem}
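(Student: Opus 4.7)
The plan is to exploit the duality identity $\mathbb{E}_z(e^{-xZ_t})=\mathbb{E}_x(e^{-zU_t})$ from Lemma \ref{dualnonexplosive}, which is in force throughout this section because $\mathcal{E}=\infty$, and thereby transfer the stationarity question for $Z$ into the long-time behaviour of the generalized Feller diffusion $U$. Lemma \ref{entranceU} shows that $0$ is an exit of $U$, so $U$ is absorbed upon reaching $0$ and Lemma \ref{stationarylawlemma} applies verbatim with $U^0=U$.

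I then split into three cases according to the shape of $\Psi$. If $-\Psi$ is not the Laplace exponent of a subordinator, part~1 of Lemma \ref{stationarylawlemma} yields $U_t\to 0$ a.s.\ under $\mathbb{P}_x$. If $-\Psi$ is the Laplace exponent of a subordinator and $\textbf{(A)}$ holds (recall $\lambda=0$ since $\mathcal{E}=\infty$), the same conclusion follows from part~2(i). If instead $-\Psi$ is the Laplace exponent of a subordinator and $\textbf{(A)}$ fails, part~2(ii) asserts that $U_t$ converges a.s.\ to $0$ with probability $L(x)$ and to $\infty$ with probability $1-L(x)$. In every case, fixing $z>0$, the random variables $e^{-zU_t}$ are uniformly bounded by $1$ and converge a.s.\ to $\mathbbm{1}_{\{U_\infty=0\}}$, so dominated convergence gives $\mathbb{E}_x(e^{-zU_t})\to \mathbb{P}_x(U_\infty=0)$, which equals $1$ in the first two cases and equals $L(x)$ in the third. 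Combining with the duality identity delivers the asserted limits of $\mathbb{E}_z(e^{-xZ_t})$.

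To complete the statement in the third case I must check that $L$ is the Laplace transform of a probability $\mu$ supported on $(\tfrac{2\delta}{c},\infty)$. Since each map $x\mapsto \mathbb{E}_z(e^{-xZ_t})$ is completely monotone on $(0,\infty)$, its pointwise limit $L$ is completely monotone, and Bernstein's theorem provides a measure $\mu$ on $[0,\infty)$ with $L(x)=\int_0^\infty e^{-xy}\mu(\ddr y)$. The direct computation $L(0+)=1$ on the defining formula shows that $\mu$ is a probability, and L\'evy's continuity theorem identifies $\mu$ as the weak limit of the laws of $Z_t$, so no mass escapes to $\infty$. For the support, I use the Ornstein--Uhlenbeck representation $R_t=z+Y_t-\tfrac{c}{2}\int_0^t R_s\,\ddr s$: since $(Y_t,t\geq 0)$ is a subordinator with drift $\delta$, the inequality $Y_t-Y_0\geq \delta t$ combined with a standard comparison argument in (\ref{OU}) gives $R_t\geq r_t$, where $\dot r_t=\delta-\tfrac{c}{2}r_t$ and $r_t\to \tfrac{2\delta}{c}$. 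As $(Z_t)$ is the Lamperti time-change of $(R_t)$ and $C_t\to\infty$ under $\mathcal{E}=\infty$, the same asymptotic lower bound carries over to $Z_t$, confining $\mu$ to $[\tfrac{2\delta}{c},\infty)$. The open inclusion will be read off from the exponential decay $\tfrac{1}{x}\log L(x)\to -\tfrac{2\delta}{c}$ as $x\to\infty$, extracted from $\Psi(u)/u\to -\delta$ in the integral defining $L$.

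The hardest step is the last one: sharpening the closed inclusion for $\operatorname{supp}\mu$ into the open inclusion $(\tfrac{2\delta}{c},\infty)$. The comparison bound yields the closed containment painlessly, but pinning down openness at the left endpoint requires a delicate Laplace-asymptotic (Tauberian-type) analysis of the density $f(y)=\exp\bigl(\int_\theta^y \tfrac{2\Psi(z)}{cz}\,\ddr z\bigr)$ near infinity.
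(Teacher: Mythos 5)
Your main limit computation is exactly the paper's argument: invoke the duality of Lemma \ref{dualnonexplosive}, observe via Lemma \ref{stationarylawlemma} that $U$ leaves $(0,\infty)$ only through $0$ or $\infty$, and pass to the limit by bounded convergence to get $\mathbb{E}_x(e^{-zU_t})\to\mathbb{P}_x(\lim_{t\to\infty}U_t=0)$, which is $1$ or $L(x)$ according to the case. The Bernstein/L\'evy-continuity identification of $L$ as the Laplace transform of a probability measure is fine and only makes explicit what the paper leaves implicit.

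The genuine problem is your treatment of the support. The paper disposes of it in one line by irreducibility of $(\zmin_t)$ in $(\frac{2\delta}{c},\infty)$ (the Remark following Lemma \ref{irreducibleclass}), whereas you propose a comparison bound for the closed containment in $[\frac{2\delta}{c},\infty)$ followed by "reading off" the open inclusion from $\frac{1}{x}\log L(x)\to-\frac{2\delta}{c}$. That last criterion cannot work: the point mass at $\frac{2\delta}{c}$ has Laplace transform $e^{-\frac{2\delta}{c}x}$, which satisfies exactly the same first-order asymptotic, so the exponential rate of decay of $L$ is incapable of distinguishing an atom at the left endpoint from mass strictly above it. To exclude the atom analytically you would need the strictly stronger statement $e^{\frac{2\delta}{c}x}L(x)\to 0$, which amounts to showing that $y\mapsto\exp\bigl(\int_\theta^y\frac{2\Psi(z)}{cz}\,\ddr z\bigr)$ decays faster than $e^{-\frac{2\delta}{c}y}$; using $\frac{\Psi(z)}{z}=-\delta-\int_0^\infty e^{-zu}\bar{\pi}(u)\,\ddr u$ this reduces to the divergence of $\int_0^\infty\frac{e^{-\theta u}}{u}\bar{\pi}(u)\,\ddr u$ when $\pi\neq 0$ — none of which is in your sketch. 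The cleaner route is the paper's: the time-changed process is irreducible in the open interval $(\frac{2\delta}{c},\infty)$, so its limiting law charges every open subset of that interval and the support claim follows without any Tauberian analysis. As written, the "hardest step" you flag is not merely hard but based on an insufficient criterion.
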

\begin{proof} According to Lemma \ref{stationarylawlemma}, $(U_t,t\geq 0)$ exits $(0,\infty)$ either by $0$ or by $\infty$. Thus,
for any $z\in (0,\infty]$ and $x\geq 0$, $$\mathbb{E}_z(e^{-xZ_t})=\mathbb{E}_x(e^{-zU_t}\mathbbm{1}_{\{\underset{t\rightarrow \infty}\lim U_t=0\}}+e^{-zU_t}\mathbbm{1}_{\{\underset{t\rightarrow \infty}\lim U_t=\infty\}})\underset{t\rightarrow \infty}{\longrightarrow} \mathbb{P}_x(\underset{t\rightarrow \infty}{\lim}U_t=0).$$
A direct application of Lemma \ref{stationarylawlemma} provides the two first convergences. The support of the stationary measure is $(\frac{2\delta}{c},\infty)$ since $(\zmin_t,t\geq 0)$ is irreducible in $(\frac{2\delta}{c},\infty)$, see the Remark below Lemma \ref{irreducibleclass}. 
\end{proof}The next Lemma establishes part 1) of Theorem \ref{longtermtheo} under the additional condition $\mathcal{E}=\infty$. Later we get the same part proved under $\mathcal{E}<\infty$.
\begin{lem}
Assume $\Psi(z)\geq 0$ for some $z>0$  then
\begin{itemize}
\item[1)] If $\int^{\infty}\frac{\ddr u}{\Psi(u)}=\infty$, then $Z_t>0$ for any $t\geq 0$ a.s. and $Z_t\underset{t\rightarrow \infty}{\longrightarrow} 0$ a.s.
\item[2)] If $\int^{\infty}\frac{\ddr u}{\Psi(u)}<\infty$, then $(Z_t,t\geq 0)$ get absorbed at $0$ in finite time almost-surely.
\end{itemize}
\end{lem}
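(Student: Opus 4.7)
The two parts rest on two earlier results already at hand: Lemma \ref{longtermminimal}, which gives $Z_t\to 0$ almost surely under the standing assumptions $\mathcal{E}=\infty$ and $-\Psi$ not a subordinator; and Lemma \ref{0access}, which identifies accessibility of $0$ in terms of Grey's condition $\int^\infty \ddr u/\Psi(u)$. Part 1 is then essentially immediate: since Grey's condition fails, Lemma \ref{0access} gives $\mathbb{P}_z(\zeta_0=\infty)=1$, so $Z_t>0$ for all $t\geq 0$ almost surely; meanwhile the long-term convergence $Z_t\to 0$ a.s.\ is exactly Lemma \ref{longtermminimal}.

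For part 2, Lemma \ref{0access} only yields $\mathbb{P}_z(\zeta_0<\infty)>0$, and the task is to upgrade this to probability one. I would work directly from the identity established inside the proof of Lemma \ref{0access},
\[
\mathbb{P}_z(\zeta_0\leq t)=\mathbb{E}_\infty\bigl(e^{-zU_t}\bigr),
\]
which is meaningful here because Grey's condition makes $\infty$ an entrance boundary of $U$ by Lemma \ref{entranceU}. Letting $t\to\infty$ and invoking bounded convergence, the conclusion $\mathbb{P}_z(\zeta_0<\infty)=1$ reduces to showing that $U_t\to 0$ almost surely under $\mathbb{P}_\infty$.

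To establish this last claim I would use the Markov property of $U$ at an arbitrarily small time $s>0$. Since $\infty$ is entrance, $U_s\in[0,\infty)$ a.s.\ under $\mathbb{P}_\infty$. The standing assumption $\mathcal{E}=\infty$ forces $\lambda=0$ (because $\lambda>0$ would imply $\mathcal{E}<\infty$), hence $2\lambda/c=0<1$, and the hypothesis $\Psi(z)\geq 0$ for some $z>0$ places us in the non-subordinator setting. Lemma \ref{stationarylawlemma} part 1 then applies and gives $\mathbb{P}_y(U_t\to 0)=1$ for every $y\in[0,\infty)$. Conditioning on $U_s$ yields
\[
\mathbb{P}_\infty\bigl(U_t\to 0\bigr)=\mathbb{E}_\infty\bigl[\mathbb{P}_{U_s}(U_\cdot\to 0)\bigr]=1,
\]
which completes the argument; absorption at $0$ in finite time is then the same event as $\{\zeta_0<\infty\}$ since $0$ is absorbing by construction.

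The only genuine obstacle is that Lemma \ref{stationarylawlemma} is stated for finite initial values, so one cannot quote it directly with $x=\infty$. The Markov-property device above circumvents this by using the fact that under the entrance condition the process leaves $\infty$ instantaneously, reducing the problem to a finite starting value to which the lemma does apply.
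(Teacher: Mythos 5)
Your argument is correct and follows essentially the same route as the paper: part 1 is read off from Lemma \ref{longtermminimal} and Lemma \ref{0access}, and part 2 combines the duality identity $\mathbb{P}_z(\zeta_0\leq t)=\mathbb{E}_\infty(e^{-zU_t})$ with the almost-sure convergence $U_t\to 0$ from Lemma \ref{stationarylawlemma}-1). Your extra step — using the Markov property at a small time $s>0$ together with the entrance property at $\infty$ to justify applying Lemma \ref{stationarylawlemma} from the infinite starting point — is a welcome refinement of a point the paper's proof leaves implicit.
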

\begin{proof}
Note first that Lemma \ref{longtermminimal} ensures that $Z_t\underset{t\rightarrow \infty}{\longrightarrow} 0$ a.s. If $\int^{\infty}\frac{\ddr z}{\Psi(z)}=\infty$, we have seen in Lemma \ref{0access} that $0$ is inaccessible. Assume now $\int^{\infty}\frac{\ddr z}{\Psi(z)}<\infty$, then by Lemma \ref{stationarylawlemma}-1) $\mathbb{P}_z(\zeta_0\leq t)=\mathbb{E}_\infty(e^{-zU_t})\underset{t\rightarrow \infty}{\longrightarrow} 1,$
thus $\mathbb{P}_z(\zeta_0<\infty)=1$.
\end{proof}
\section{Infinity as regular reflecting or exit boundary}
\label{regularsection}

In this subsection, we assume that $\mathcal{E}$ is finite and will prove  Theorems \ref{regularboundarytheo} and \ref{exitboundarytheo}. Recall that if $0\leq 2\lambda/c<1$ then $0$ is a regular boundary of the $\Psi$-generalized Feller diffusion and if $2\lambda/c\geq 1$, then $0$ is an entrance boundary. \\

\noindent Recall $\Psi_{k}$ the branching mechanism associated to the triplet $(\sigma, \gamma, \pi_k)$ with $\pi_k(\ddr u)=\pi_{|]0,k[}(\ddr u)+(\bar{\pi}(k)+\lambda)\delta_k $, that is
$$\Psi_{k}(x)=\frac{\sigma^2}{2}x^2-\gamma x + \int_{]0,k[}\left(e^{-xu}-1+xu1_{u\in ]0,1[}\right)\pi(\ddr u)+ (e^{-xk}-1)(\bar{\pi}(k)+\lambda).$$
Note that for all $k\geq 0$, $|\Psi_{k}'(0+)|<\infty$ and for all $x>0$, $\Psi_{k}(x)\underset{k\rightarrow \infty}{\longrightarrow} \Psi(x)$.  
\begin{lemma}\label{approxU0} 
For any $k\geq 0$, denote $(U^{(k)}_t, t\geq 0)$ the unique solution to $$\ddr U^{(k)}_t=\sqrt{cU^{(k)}_t}\ddr B_t-\Psi_{k}(U^{(k)}_t)\ddr t.$$
There exists a probability space on which, with probability $1$: 
\[U^{(k)}_t\leq U^{(k+1)}_t, \text{ for all } k\geq 1 \text{ and } t\geq 0.\]
If $0\leq \frac{2\lambda}{c}<1$, then as $k$ goes to $\infty$, the sequence of processes $(U^{(k)}_t, t\geq 0)$ converges pointwise almost-surely towards a process $(U^0_{t}, t\geq 0)$ which is the solution to (\ref{generalizedFellerdiffusionequation}) absorbed at $0$. If $\frac{2\lambda}{c}\geq 1$, then as $k$ goes to $\infty$, the sequence of processes $(U^{(k)}_t, t\geq 0)$ converges almost-surely towards $(U_{t}, t\geq 0)$ the unique solution to (\ref{generalizedFellerdiffusionequation}).
\end{lemma}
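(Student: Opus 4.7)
The plan is to realize all the processes $(U^{(k)}_t,t\geq 0)$ simultaneously on a single filtered probability space equipped with one Brownian motion $(B_t)$, exploit a monotonicity of the drifts in $k$, and pass to the limit in the SDE.

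First I would verify that $(\Psi_k)$ decreases pointwise to $\Psi$ on $[0,\infty)$. For $1\leq k<k'$, the identity $\bar\pi(k)+\lambda=\bar\pi(k')+\lambda+\pi([k,k'))$ (together with the fact that the truncation $zu\,\mathbbm{1}_{u<1}$ is the same on both sides since $k\geq 1$) gives by direct computation
\[
\Psi_{k'}(z)-\Psi_k(z)=\int_{[k,k')}\!\bigl(e^{-zu}-e^{-zk}\bigr)\pi(\ddr u)+\bigl(e^{-zk'}-e^{-zk}\bigr)(\bar\pi(k')+\lambda)\leq 0,
\]
because $u\mapsto e^{-zu}$ is non-increasing; letting $k'\to\infty$ yields $\Psi_k\downarrow\Psi$. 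By convexity, the convergence is uniform on compact subsets of $[0,\infty)$. Since $\pi_k$ has compact support, each $\Psi_k$ is smooth and in particular locally Lipschitz, so the $\Psi_k$-SDE has a unique strong solution up to the first hitting time $\tau^{(k)}_0$ of $0$. Applying the Yamada--Watanabe comparison theorem \cite[Theorem IX.3.7]{MR1725357} pathwise on $[0,\tau^{(k)}_0\wedge \tau^{(k+1)}_0]$ (the inequality is trivial after either process is absorbed at $0$) gives $U^{(k)}_t\leq U^{(k+1)}_t$ for all $t\geq 0$ a.s.

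The monotone pointwise limit $\bar U_t:=\lim_{k\to\infty}U^{(k)}_t$ therefore exists in $[0,\infty]$ a.s. The same comparison, now between $-\Psi_k$ and $-\Psi$, yields $U^{(k)}\leq U^0$ in the regular case and $U^{(k)}\leq U$ in the entrance case, which shows $\bar U$ is finite and locally bounded. I would then pass to the limit in
\[
U^{(k)}_t=x+\int_0^t\sqrt{cU^{(k)}_s}\,\ddr B_s-\int_0^t\Psi_k(U^{(k)}_s)\,\ddr s.
\]
The stochastic integral converges in $L^2$ by dominated convergence (dominant $\sqrt{cU^0_s}$ or $\sqrt{cU_s}$) together with It\^o's isometry, while the drift integral converges by ordinary dominated convergence, using the uniform convergence $\Psi_k\to\Psi$ on the a.s. bounded range of $U^{(k)}_{\cdot\wedge t}$. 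Hence $\bar U$ is a continuous process solving (\ref{generalizedFellerdiffusionequation}).

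It remains to match $\bar U$ with the right process. If $\bar U_t=0$ for some $t>0$, then $U^{(k)}_t=0$ for every $k$, and since each $U^{(k)}$ is absorbed at $0$ one gets $U^{(k)}_s=\bar U_s=0$ for all $s\geq t$. In the case $0\leq 2\lambda/c<1$, this exhibits $\bar U$ as a solution of (\ref{generalizedFellerdiffusionequation}) that is absorbed at $0$, and uniqueness of such a solution forces $\bar U=U^0$. In the case $2\lambda/c\geq 1$ the boundary $0$ is an entrance for the $\Psi$-SDE, so its strong solution is unique without any boundary prescription, and therefore $\bar U=U$. I expect the main technical obstacle to be the uniform control of $\sup_{s\leq t}U^{(k)}_s$ needed to apply dominated convergence in the drift term $\Psi_k(U^{(k)}_s)\to\Psi(\bar U_s)$; this is precisely what the domination by $U^0$ (resp.~$U$) provided by the comparison step secures.
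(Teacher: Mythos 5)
Your comparison/monotonicity step is the same as the paper's (the paper also checks $\Psi_k\geq\Psi_{k+1}$ and invokes \cite[Theorem IX.3.7]{MR1725357}), but your identification of the limit is genuinely different: you pass to the limit directly in the SDE, whereas the paper passes to the limit in the martingale problem for test functions $g\in C^2_c((0,\infty))$, showing $\|\mathscr{A}^{(k)}g-\mathscr{A}g\|_\infty\to 0$ and invoking Ethier--Kurtz together with well-posedness of the martingale problem on $(0,\infty)$. The paper's choice of test functions supported away from $0$ is not cosmetic; it is exactly what sidesteps the two problems below.

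First gap: the claim that $\Psi_k\to\Psi$ uniformly on compact subsets of $[0,\infty)$ is false whenever $\lambda>0$. Indeed $\Psi_k(0)=0$ for every $k$ while $\Psi(0)=-\lambda$, so the pointwise limit of $\Psi_k$ is discontinuous at $0$ and the convergence is locally uniform only on $(0,\infty)$. Consequently your displayed identity
\begin{equation*}
\bar U_t=x+\int_0^t\sqrt{c\bar U_s}\,\ddr B_s-\int_0^t\Psi(\bar U_s)\,\ddr s
\end{equation*}
cannot hold for $t$ beyond the absorption time when $\lambda>0$: on $\{s:\bar U_s=0\}$ one has $\Psi_k(U^{(k)}_s)=0\to 0\neq\Psi(0)=-\lambda$, so the drift integrals converge to $\int_0^{t\wedge\tau^\infty}\Psi(\bar U_s)\,\ddr s$, not to $\int_0^{t}\Psi(\bar U_s)\,\ddr s$. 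Your final paragraph effectively repairs the conclusion (the limit solves the SDE up to the hitting time of $0$ and is then absorbed, which is all the lemma asserts), but the intermediate assertion ``$\bar U$ is a continuous process solving (\ref{generalizedFellerdiffusionequation})'' is wrong as stated in the regular case $0<\frac{2\lambda}{c}<1$, which is one of the two cases the lemma covers. You must restrict the SDE limit to $[0,\tau^\infty)$ and argue the convergence of the drift integrand pointwise for $s<\tau^\infty$ (where $\bar U_s>0$ and local uniform convergence on $(0,\infty)$ applies), with the uniform bound $\sup_k\sup_{u\in[0,M]}|\Psi_k(u)|<\infty$ as dominant.

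Second gap: the $L^2$ convergence of the stochastic integrals via It\^o's isometry requires $\mathbb{E}_x\!\int_0^t U^0_s\,\ddr s<\infty$ (resp.\ with $U$), which you assert through ``dominated convergence'' without proof. It is true, but needs an argument: since $-\Psi(u)\leq \lambda+|\gamma|u+\bar{\pi}(1)$ grows at most linearly, one compares $U^0$ with a supercritical Feller diffusion to get finite moments (this is precisely the device used in the paper's proof of Lemma \ref{dualnonexplosive}); alternatively, localize at the exit times of $[0,n]$ and conclude by convergence in probability of the stochastic integrals, which avoids integrability altogether. With these two points fixed, and granting that the comparison with $U^0$ is legitimate because only the locally Lipschitz drift $-\Psi_k$ needs to satisfy the Lipschitz hypothesis of \cite[Theorem IX.3.7]{MR1725357}, your route goes through and is a reasonable alternative to the paper's martingale-problem argument.
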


\begin{proof}
One can plainly check that for any $k\geq 0$ and $x\geq 0$, $\Psi_{k}(x)\geq \Psi_{k+1}(x)$. Since $|\Psi_{k}'(0+)|<\infty$ for any $k\geq 0$, the branching mechanisms $\Psi_{k}$ are locally Lipschitz on $[0,\infty)$ and therefore by applying the comparison theorem for SDEs (\cite[Theorem IX.3.7]{MR1725357}), one has on some probability space $$\mathbb{P}_x\left(U^{(k+1)}_t\geq U^{(k)}_t \text{ for all } t\geq 0 \text{ and all } k\geq 0\right)=1.$$
The existence of the limiting process $(U^{(\infty)}_t,t\geq 0)$ (in a pointwise sense) is ensured by monotonicity. Set $\tau^{k}:=\inf\{t\geq 0, U^{(k)}_t\notin (0,\infty)\}$ and
$\tau^{\infty}:=\inf\{t\geq 0, U^{(\infty)}_t\notin (0,\infty)\}$.  
Let $\mathscr{A}^{(k)}$ be the generator of $(U^{(k)}_t,t\geq 0)$. The diffusive part in $\mathscr{A}^{(k)}$ and $\mathscr{A}$ are the same, therefore for any $g\in C^{2}_c((0,\infty))$:
\begin{align*}
||\mathscr{A}^{(k)}g-\mathscr{A}g||_\infty&=\sup_{x\in [0,\infty[}\lvert(\Psi(x)-\Psi_{k}(x))g'(x)\lvert\\
&=\sup_{x\in [0,\infty[}\left\lvert\left(-\lambda+\int_{]k,\infty]}(e^{-xu}-1)\pi(\ddr u)+(1-e^{-xk})(\bar{\pi}(k)+\lambda)\right)g'(x)\right\lvert\\
&\leq 2\sup_{x\in [0,\infty[}\lvert e^{-xk}\bar{\pi}(k)g'(x)\lvert+\lambda \sup_{x\in [0,\infty[}\lvert e^{-xk}g'(x)\lvert.
\end{align*} 
For all $k\geq 1$ $\bar{\pi}(k)\leq \bar{\pi}(1)$  and for any $x>0$, $\underset{k\rightarrow \infty}{\lim} e^{-xk}=0$, since $g'$ has a compact support in $(0,\infty)$, then $||\mathscr{A}^{(k)}g-\mathscr{A}g||_\infty \underset{k\rightarrow \infty}{\longrightarrow} 0$. Hence, for large enough $k\geq 1$, $||\mathscr{A}^{(k)}g||_\infty\leq 1+||\mathscr{A}g||_\infty$ and  $\mathscr{A}^{(k)}g(U_s^{(k)})\underset{k\rightarrow \infty}{\longrightarrow} \mathscr{A}g(U_s^{(\infty)})$ a.s. for any $s\geq 0$. One can deduce that for any function $g\in \mathcal{C}^2_c((0,\infty))$, the process
$$t\in [0,\tau^{\infty})\mapsto g(U_t^{(\infty)})-\int^{t}_0\mathscr{A}g(U_s^{(\infty)})\ddr s$$
is a martingale (see for instance \cite[Lemma 5.1 page 196]{EthierKurtz}). The process $(U_t^{(\infty)}\!,t<\tau^{\infty})$ solves the same martingale problem as $(U_t,t<\tau)$. The latter problem being well-posed (see for instance \cite[Section 6.1, Theorem 1.6]{MR1398879}), $(U_t^{(\infty)}\!,t<\tau^{\infty})$ and $(U_t,t<\tau)$ have the same law. It remains only to identify the behavior of the process $(U_t^{(\infty)},t\geq 0)$ after its first explosion time $\tau^{\infty}$. Recall that by Lemma \ref{entranceU}, $\infty$ is inaccessible, so that $\tau^{\infty}=\inf\{t\geq 0, U^{(\infty)}_t=0\}$ a.s. If $\frac{2\lambda}{c}\geq 1$ then $0$ is an entrance, $\tau^{\infty}=\infty$ a.s. and $(U_t^{(\infty)},t\geq 0)$ has the same law as $(U_t,t\geq 0)$. If $0\leq \frac{2\lambda}{c}<1$, then $0$ is regular or exit and $\tau^{\infty}<\infty$ with positive probability.  Let $t\geq 0$. On the event $\{\tau^{\infty}<\infty\}$, by pointwise almost-sure convergence, $U_{t+\tau^{\infty}}^{(\infty)}=\underset{k\rightarrow \infty}{\lim} U^{(k)}_{t+\tau^{\infty}}$. By monotonicity $\tau^{\infty}\geq \tau^{k}$ a.s. for all $k\geq 1$, moreover $0$ is an exit for $(U^{(k)}_t,t\geq 0)$ therefore $U^{(k)}_{t+\tau^{\infty}}=U^{(k)}_{t+\tau^{k}}=0$ and $U_{t+\tau^{\infty}}^{(\infty)}=0$ for any $t\geq 0$. We conclude that $(U_t^{(\infty)},t\geq 0)$ is the diffusion whose generator is $\mathscr{A}$ with $0$ absorbing.
\end{proof}
The next lemmas  will provide proofs of Theorem \ref{regularboundarytheo} and Theorem \ref{exitboundarytheo}. For any $k\geq 1$, denote by $(Z_t^{(k)}, t\geq 0)$ the logistic CSBP defined on $[0,\infty]$ with mechanisms $\Psi_{k}$. According to Lemma \ref{Zextendedentrance}, the processes $(Z_t^{(k)},t\geq 0)$ are Feller. In the sequel we work with their c\`adl\`ag versions.
\begin{lemma}\label{weakconv} Assume $\mathcal{E}<\infty$ and $ 0\leq \frac{2\lambda}{c}<1$, the sequence $((Z^{(k)}_t)_{t\geq 0}, k\geq 1)$ converges weakly towards a c\`adl\`ag Feller process $(Z_t, t\geq 0)$ valued in $[0,\infty]$ such that for all $z\in [0,\infty]$, all $t\geq 0$, and all $x\in [0,\infty[$ $$\mathbb{E}_{z}[e^{-xZ_t}]=\mathbb{E}_{x}[e^{-zU^0_{t}}]$$
where $(U^0_t,t\geq 0)$ is the $\Psi$-generalized Feller diffusion satisfying (\ref{generalizedFellerdiffusionequation}) with $0$ regular absorbing.
\end{lemma}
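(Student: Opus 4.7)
The plan is to transfer the duality of Lemma \ref{dualnonexplosive}, known for each non-explosive approximation $(Z^{(k)}_t,t\geq 0)$, to the limit by exploiting the monotone approximation $U^{(k)}\nearrow U^0$ of Lemma \ref{approxU0}, and then to promote the resulting pointwise convergence of Laplace transforms to weak convergence in $\mathcal{D}([0,\infty])$ via the Feller semigroup convergence theorem on the compact space $[0,\infty]$.

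First, for each $k\geq 1$ the condition $|\Psi_k'(0+)|<\infty$ forces $\mathcal{E}_k=\infty$ by Theorem \ref{explosion}, so Theorem \ref{entranceboundarytheo} supplies a Feller process $(Z^{(k)}_t,t\geq 0)$ on $[0,\infty]$ with $\infty$ entrance and the duality
\[
\mathbb{E}_z[e^{-xZ^{(k)}_t}]=\mathbb{E}_x[e^{-zU^{(k)}_t}], \qquad z\in[0,\infty],\ x\in[0,\infty),
\]
both sides being equal to $\mathbb{P}_x(U^{(k)}_t=0)$ when $z=\infty$. I then define the candidate limiting semigroup on exponentials by $P_te_x(z):=\mathbb{E}_x[e^{-zU^0_t}]$. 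Since $U^{(k)}_t\nearrow U^0_t$ a.s., $k\mapsto\mathbb{E}_x[e^{-zU^{(k)}_t}]$ is non-increasing for fixed $z\geq 0$, and bounded convergence yields pointwise convergence $\mathbb{E}_z[e^{-xZ^{(k)}_t}]\to P_te_x(z)$ for every $z\in[0,\infty)$.

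Second, I verify that $P_t$ extends to a Feller semigroup on $[0,\infty]$, following the Stone--Weierstrass scheme of Lemma \ref{Zextendedentrance}: the exponentials $\{e_x,x\geq 0\}$ generate a subalgebra separating $[0,\infty]$ and hence, being a subalgebra containing the constants on a compact space, is dense in $C([0,\infty])$; the semigroup identity on exponentials comes from the Markov property of $U^0$ via $P_t(P_se_x)(z)=\int e^{-zu}\,\mathbb{P}_x(U^0_{s+t}\in du)$; continuity of $z\mapsto P_te_x(z)$ at $z=\infty$ is ensured by the regular absorbing character of $0$ for $U^0$, which gives $\lim_{z\to\infty}\mathbb{E}_x[e^{-zU^0_t}]=\mathbb{P}_x(U^0_t=0)$; strong continuity at $t=0$ follows from the path continuity of $U^0$. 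To bridge the $z=\infty$ convergence, I note that the exit-time $\tau^{(k)}_0:=\inf\{s:U^{(k)}_s=0\}$ is non-decreasing in $k$ and bounded above by $\tau^0_0$ (from $U^{(k)}\leq U^0$); conversely, for every $s<\tau^0_0$ one has $U^0_s>0$, and the monotone convergence $U^{(k)}_s\nearrow U^0_s$ forces $U^{(k)}_s>0$ for large $k$, so $\tau^{(k)}_0>s$. Hence $\tau^{(k)}_0\nearrow\tau^0_0$ a.s., and since $0$ is exit for each $U^{(k)}$ and absorbing for $U^0$,
\[
\mathbb{P}_x(U^{(k)}_t=0)=\mathbb{P}_x(\tau^{(k)}_0\leq t)\searrow\mathbb{P}_x(\tau^0_0\leq t)=\mathbb{P}_x(U^0_t=0).
\]

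Third, this pointwise monotone convergence of continuous functions on the compact $[0,\infty]$ is upgraded to uniform convergence by Dini's theorem, and then extended to all $f\in C([0,\infty])$ using the Stone--Weierstrass density of $\{e_x\}$ together with the contraction bound $\|P_t^{(k)}f\|_\infty\leq\|f\|_\infty$. The Feller semigroup convergence theorem (Ethier--Kurtz \cite[Theorem 4.2.5]{EthierKurtz}) then delivers weak convergence of $(Z^{(k)}_\cdot)$ in $\mathcal{D}([0,\infty])$ to a càdlàg realization $(Z_t,t\geq 0)$ of the Feller semigroup $(P_t,t\geq 0)$, which by construction satisfies the stated duality formula and coincides with $(Z^{\min}_t,t\geq 0)$ before explosion.

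The main technical obstacle is the identification of the limit entrance law at $z=\infty$: the approximating entrance-law Laplace transforms $\mathbb{P}_x(U^{(k)}_t=0)$ are attached to \emph{exit} boundaries of the $U^{(k)}$, while the target $\mathbb{P}_x(U^0_t=0)$ is attached to a \emph{regular absorbing} boundary of $U^0$; the monotone hitting-time argument above is exactly what reconciles these two viewpoints and gives the correct continuity at $\infty$ for the limiting semigroup. Once this is secured, everything reduces to a routine application of Feller convergence on the compact state space $[0,\infty]$.
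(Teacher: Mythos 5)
Your proposal is correct and follows the same overall skeleton as the paper's proof (duality for each non-explosive approximation, the monotone coupling $U^{(k)}\nearrow U^{0}$ from Lemma \ref{approxU0}, Stone--Weierstrass on the compact state space $[0,\infty]$, and the Ethier--Kurtz semigroup convergence theorem), but you replace the two technically hardest steps by genuinely different arguments. For the uniform convergence of $P_t^{(k)}e_x$ to $P_te_x$ on $[0,\infty]$, the paper works by hand: it splits $\mathbb{E}_x\bigl[e^{-zU_t^{(k)}}-e^{-zU_t^{(\infty)}}\bigr]$ according to whether $U_t^{(k)}=0<U_t^{(\infty)}$ (controlled by $\mathbb{P}_x(\tau^k\leq t<\tau^\infty)\to 0$) or $0<U_t^{(k)}<U_t^{(\infty)}$, and in the latter case evaluates the supremum over $z$ at the explicit maximizer $z_k=(\log U_t^{(\infty)}-\log U_t^{(k)})/(U_t^{(\infty)}-U_t^{(k)})$. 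You instead observe that the convergence is monotone in $k$ at every point of $[0,\infty]$ (including $z=\infty$, via $\tau^{(k)}\nearrow\tau^{\infty}$ — the same hitting-time fact the paper exploits), that each $P_t^{(k)}e_x$ and the limit are continuous on the compact $[0,\infty]$, and invoke Dini; this is shorter and avoids the maximizer computation entirely, at the cost of having to verify continuity of the limit \emph{before} uniform convergence is available, which you do. For the semigroup identity, the paper deduces it from the uniform convergence combined with the semigroup property of each $P_t^{(k)}$ and the contraction bound, whereas you obtain it directly from the Markov property of the dual diffusion $U^{0}$ by a Fubini computation on mixtures of exponentials; this also works, provided one respects the conventions $e^{-\infty\cdot u}=\mathbbm{1}_{\{u=0\}}$ and the absorption of $U^{0}$ at $0$ (so that $P_te_0\equiv 1$). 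Both routes are valid, and yours is the more economical.
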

\begin{proof}
Denote by $(P_t^{(k)},t\geq 0)$ the semi-group of $(Z_t^{(k)},t\geq 0)$ and $(p^{(k)}_t(z,\cdot), z\in [0,\infty],t\geq 0)$ its transition kernel. Let $t\geq 0$ and $z\in [0,\infty]$ be fixed. For any $k\geq 1$, by Lemma \ref{dualnonexplosive}, one has for all $x\geq 0$, $P_t^{(k)}e_x(z):=\mathbb{E}_z(e^{-xZ_t^{(k)}})=\mathbb{E}_x(e^{-zU_t^{(k)}})$ where $(U_t^{(k)},t\geq 0)$ is defined in Lemma \ref{approxU0}. Since, for any $t\geq 0$, $U_t^{(k)}$ converges almost-surely towards $U_t^{(\infty)}$ as $k$ goes to infinity, then $\underset{k\rightarrow \infty}{\lim}\mathbb{E}_z(e^{-xZ_t^{(k)}})=\mathbb{E}_x(e^{-zU_t^{(\infty)}})=\mathbb{E}_x(e^{-zU_t^{0}})$. Therefore $p_t^{(k)}(z,\cdot)$ converges weakly as $k$ goes to $\infty$ towards some probability $p_t(z,\cdot)$ over $[0,\infty]$ satisfying 
\begin{align*}
P_te_x(z)&:=\int_{[0,\infty]}e^{-xy}p_t(z,\ddr y)=\mathbb{E}_{x}[e^{-zU_t^{0}}] \text{ for any } z\in [0,\infty),\\
P_te_x(\infty)&:=\int_{[[0,\infty]} e^{-xy} p_t(\infty,\ddr y)=\mathbb{P}_{x}(U_t^{0}=0).
\end{align*}
Since $z\in [0,\infty]\mapsto P_te_x(z)$ is continuous, then by Stone-Weierstrass, $P_tf$ is continuous for any function $f\in C_b([0,\infty])$. We stress that at this stage, we do not know that $(P_t,t\geq 0)$ forms a semigroup. We establish now that for any $x\geq 0$, $(P_t^{(k)}e_x,k\geq 1)$ converges uniformly towards $P_te_x$. For any $x\geq 0$, define $\phi_k(x,t):=||P_t^{(k)}e_{x}-P_te_{x}||_{\infty}$. Recall that $U_t^{(k)}\leq U_t^{(\infty)}$ for any $t\geq 0$,   $\mathbb{P}_x$-almost-surely. Therefore, for any $z\in [0,\infty]$ and $t\geq 0$,  

\begin{align*}
&\mathbb{E}_{x}\left[e^{-zU_t^{(k)}}-e^{-zU_t^{(\infty)}}\right]\\
&=\mathbb{E}_{x}\left[(e^{-zU_t^{(k)}}-e^{-zU_t^{(\infty)}})\mathbbm{1}_{\{U_t^{(k)}<U_t^{(\infty)}\}}\right]\\
&=\mathbb{E}_{x}\left[(e^{-zU_t^{(k)}}-e^{-zU_t^{(\infty)}})\mathbbm{1}_{\{0<U_t^{(k)}<U_t^{(\infty)}\}}\right]+\mathbb{E}_{x}\left[(e^{-zU_t^{(k)}}-e^{-zU_t^{(\infty)}})\mathbbm{1}_{\{U_t^{(k)}=0,U_t^{(\infty)}>0\}}\right]\\
&\leq \mathbb{E}_{x}\left[(e^{-zU_t^{(k)}}-e^{-zU_t^{(\infty)}})\mathbbm{1}_{\{0<U_t^{(k)}<U_t^{(\infty)}\}}\right]+\mathbb{P}_x(\tau^k\leq t<\tau^\infty).
\end{align*}
Since $\tau^k\underset{k\rightarrow \infty}{\longrightarrow} \tau^{\infty}$ a.s. then $\mathbb{P}_x(\tau^k\leq t<\tau^\infty)\underset{k\rightarrow \infty}\longrightarrow 0$.
Now, for any $x\geq 0$, $$\sup_{z\in [0,\infty]}\mathbb{E}_{x}\left[(e^{-zU_t^{(k)}}-e^{-zU_t^{(\infty)}})\mathbbm{1}_{\{0<U_t^{(k)}<U_t^{(\infty)}\}}\right] \leq \mathbb{E}_{x}\left[\sup_{z\in [0,\infty]}\left(e^{-zU_t^{(k)}}-e^{-zU_t^{(\infty)}}\right)\mathbbm{1}_{\{0<U_t^{(k)}<U_t^{(\infty)}\}} \right].$$
The function $z\mapsto e^{-zU_t^{(k)}}-e^{-zU_t^{(\infty)}}$ reaches its maximum at $z_k=\frac{\log U_t^{(\infty)}-\log U_t^{(k)}}{U_t^{(\infty)}-U_t^{(k)}}$. Moreover, on the event $\{0<U_t^{(k)}<U_t^{(\infty)}\}$, $z_kU_t^{(\infty)}\underset{k\rightarrow\infty}{\longrightarrow} 1$ and $z_kU_t^{(k)}\underset{k\rightarrow\infty}{\longrightarrow} 1$ almost-surely, thus by Lebesgue's theorem $$\mathbb{E}_x\left[(e^{-z_kU_t^{(k)}}-e^{-z_kU_t^{(\infty)}})\mathbbm{1}_{\{0<U_t^{(k)}<U_t^{(\infty)}\}}\right]\underset{k\rightarrow \infty}{\longrightarrow} 0.$$
Hence, $\phi_{k}(x,t) \underset{k\rightarrow \infty}{\longrightarrow} 0$ and by Stone-Weierstrass, for any $f\in C_b([0,\infty])$ $||P_t^{(k)}f-P_tf||_\infty \underset{k\rightarrow \infty}{\longrightarrow} 0$. As a first consequence of the uniform convergence, we show now that $(P_t,t\geq 0)$ is a semigroup. Let $s,t\geq 0$. Let $g\in C_b([0,\infty])$ and $(g_k,k\geq 1)$ such that $g_k\underset{k\rightarrow \infty}{\longrightarrow} g$ uniformly. Then,
\begin{align*}
||P_t^{(k)}g_k-P_tg||_\infty &\leq ||P_t^{(k)}g_k-P^{(k)}_tg||_\infty+||P_t^{(k)}g-P_tg||_\infty\\
&\leq ||g_k-g||_\infty+||P_t^{(k)}g-P_tg||_\infty,
\end{align*}
where we have used in the second inequality that $P_t^{(k)}$ is a contraction. The upper bound goes to $0$ as $k$ goes to $\infty$ by the uniform convergence. Let $f\in C_b([0,\infty])$ and apply the last convergence to $g_k:=P_s^{(k)}f$ and $g=P_sf$, one has, by the semigroup property of $(P_t^{(k)},t\geq 0)$.
$$P_{t+s}f=\underset{k\rightarrow \infty}{\lim} P_{t+s}^{(k)}f=\underset{k\rightarrow \infty}{\lim} P_{t}^{(k)}g_k=P_{t}P_sf.$$ 
Therefore $(P_t,t\geq 0)$ is a semigroup on $C_b([0,\infty])$. As in Lemma \ref{Zextendedentrance}, we see that it is continuous at $0$. Lastly, since the convergence of semigroups is uniform in $C_b([0,\infty])$, one invokes Theorem 2.5 p167 in \cite{EthierKurtz} to claim that the sequence of processes $(Z_t^{(k)},t\geq 0)$ converges weakly (in the Skorokhod topology) towards a c\`adl\`ag Markov process $(Z_t,t\geq 0)$ with semigroup $(P_t,t\geq 0)$.
\end{proof}
\begin{lemma}\label{weakconvexit}  Assume $\frac{2\lambda}{c}\geq 1$, the sequence $((Z^{(k)}_t)_{t\geq 0}, k\geq 1)$ converges weakly towards a c\`adl\`ag Feller process $(Z_t, t\geq 0)$ valued in $[0,\infty]$ such that for all $z\in [0,\infty]$, all $t\geq 0$, and all $x\in (0,\infty)$ $$\mathbb{E}_{z}[e^{-xZ_t}]=\mathbb{E}_{x}[e^{-zU_{t}}],$$
where $(U_t,t\geq 0)$ is the $\Psi$-generalized Feller diffusion (\ref{generalizedFellerdiffusionequation}) with $0$ entrance.
\end{lemma}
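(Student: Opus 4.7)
The strategy is to mimic the proof of Lemma \ref{weakconv}, substituting the convergence result of Lemma \ref{approxU0} that applies when $\frac{2\lambda}{c}\geq 1$, namely that $U^{(k)}_t\uparrow U_t$ almost surely as $k\to\infty$, where $(U_t,t\geq 0)$ is the unique $\Psi$-generalized Feller diffusion with $0$ entrance. I would first, for each fixed $z\in[0,\infty]$, $t\geq 0$ and $x\in(0,\infty)$, combine the duality $\mathbb{E}_z[e^{-xZ^{(k)}_t}]=\mathbb{E}_x[e^{-zU^{(k)}_t}]$ from Lemma \ref{dualnonexplosive} (applicable because each $\Psi_k$ has $|\Psi_k'(0+)|<\infty$ so the process $Z^{(k)}$ is non-explosive) with the monotone almost-sure convergence $U^{(k)}_t\uparrow U_t$ and bounded convergence on the right-hand side, to conclude that $\mathbb{E}_z[e^{-xZ^{(k)}_t}]\to\mathbb{E}_x[e^{-zU_t}]$ for every $z,x,t$.

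Next I would define, for each $t\geq 0$ and $z\in[0,\infty]$, a candidate probability measure $p_t(z,\cdot)$ on $[0,\infty]$ via Laplace transforms by setting $\int_{[0,\infty]} e^{-xy}p_t(z,\ddr y)=\mathbb{E}_x[e^{-zU_t}]$ for $x>0$, noting that the map $x\mapsto\mathbb{E}_x[e^{-zU_t}]$ is indeed a Laplace transform (it is the limit of the Laplace transforms of the laws of $Z^{(k)}_t$ started from $z$, which are tight on the compact space $[0,\infty]$). The associated operators $P_tf(z)=\int f\,\ddr p_t(z,\cdot)$ are then defined for $f\in C_b([0,\infty])$. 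The continuity in $z$ of $z\mapsto P_te_x(z)$ for $x>0$ is clear on $[0,\infty)$ by the duality, and at $z=\infty$ one uses $\mathbb{E}_x[e^{-zU_t}]\to 0$ as $z\to\infty$ since $U_t>0$ almost surely under $\mathbb{P}_x$ for every $x>0$ (here $0$ is entrance). Since the algebra generated by $\{e_x,\,x>0\}$ together with the constants separates points of $[0,\infty]$, Stone–Weierstrass gives $P_t\,C_b([0,\infty])\subset C_b([0,\infty])$.

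Then, paralleling the proof of Lemma \ref{weakconv}, I would upgrade the pointwise convergence $P^{(k)}_te_x\to P_te_x$ to uniform convergence in $z\in[0,\infty]$. Using $U^{(k)}_t\leq U_t$ almost surely, the supremum over $z\in[0,\infty]$ of $e^{-zU^{(k)}_t}-e^{-zU_t}$ on the event $\{0<U^{(k)}_t<U_t\}$ is attained at $z_k=(\log U_t-\log U^{(k)}_t)/(U_t-U^{(k)}_t)$, for which $z_kU_t\to 1$ and $z_kU^{(k)}_t\to 1$ almost surely; dominated convergence then yields $\|P^{(k)}_te_x-P_te_x\|_\infty\to 0$. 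Since $\{e_x, x\geq 0\}$ together with constants is dense in $C_b([0,\infty])$ by Stone–Weierstrass, this extends to $\|P^{(k)}_tf-P_tf\|_\infty\to 0$ for every $f\in C_b([0,\infty])$. As in Lemma \ref{weakconv}, the uniform convergence combined with the semigroup property of each $(P^{(k)}_t,t\geq 0)$ and a contraction argument ($\|P^{(k)}_t g_k - P^{(k)}_t g\|_\infty\leq \|g_k-g\|_\infty$) gives that $(P_t,t\geq 0)$ is itself a semigroup, and its continuity at $t=0$ comes from the continuity of $t\mapsto\mathbb{E}_x[e^{-zU_t}]$ (the diffusion $U$ has continuous paths).

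The Feller property being established, I would conclude by invoking Theorem 2.5 of Chapter~4 in Ethier–Kurtz \cite{EthierKurtz}: the uniform convergence $P^{(k)}_tf\to P_tf$ in $C_b([0,\infty])$ together with the c\`adl\`ag property of each $Z^{(k)}$ implies the weak convergence of $(Z^{(k)}_t,t\geq 0)$ towards a c\`adl\`ag Feller process $(Z_t,t\geq 0)$ with semigroup $(P_t,t\geq 0)$, and this semigroup satisfies the announced duality with the $\Psi$-generalized Feller diffusion started from $x>0$ with $0$ entrance. The main technical point—and the only real departure from the previous lemma—is ensuring that the boundary $z=\infty$ of the state space is handled correctly: here the identity $\mathbb{E}_\infty[e^{-xZ_t}]=\mathbb{E}_x[e^{-\infty\cdot U_t}]=0$ forces $Z_t=\infty$ $\mathbb{P}_\infty$-almost surely for every $t>0$, which is precisely the statement that $\infty$ is an exit for $(Z_t,t\geq 0)$, consistent with the regime $\frac{2\lambda}{c}\geq 1$.
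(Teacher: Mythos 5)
Your proposal is correct and follows essentially the same route as the paper, whose own proof of this lemma is little more than the remark that one repeats the argument of Lemma \ref{weakconv}, separating the cases $x>0$ and $x=0$ and feeding in the Lemma \ref{approxU0} convergence of $(U^{(k)}_t)$ to the diffusion with $0$ entrance. The one detail the paper makes explicit that you gloss over is the contribution of the event $\{U^{(k)}_t=0<U_t\}$ in the uniform-convergence step, which is controlled by $\mathbb{P}_x(\tau^{k}\leq t)\underset{k\rightarrow\infty}{\longrightarrow}0$ since $\tau^{k}\uparrow\tau^{\infty}=\infty$ when $0$ is an entrance for the limiting diffusion.
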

\begin{proof}
The only difference with the proof above lies in the fact that we have to separate the cases $x>0$ and $x=0$, since $0$ is an entrance boundary. By Lemma \ref{approxU0}, $(U_t^{(k)},t\geq 0)$ converges towards $(U^{(\infty)}_t,t\geq 0)$ in a pointwise sense $\mathbb{P}_x$-almost-surely, for any $x>0$ and $(U_t^{(\infty)},t\geq 0)$ has the same law as the diffusion $(U_t,t\geq 0)$ solving (\ref{generalizedFellerdiffusionequation}) with $0$ entrance. The limiting semigroup is then given by $P_te_{x}(z)=\mathbb{E}_{x}[e^{-zU_{t}}]$ for any $z\in [0,\infty]$ with $x>0$. The case $x=0$ is trivial since for any $z\in [0,\infty]$ and any $k\geq 1$, one has $\mathbb{E}_z[e^{-0.Z_t^{(k)}}]=\mathbb{E}_{0}[e^{-zU_t^{(k)}}]=1$ and so $P_te_{0}(z)=1$ for any $z\in [0,\infty]$. The rest of the proof is similar to the one above. By replacing $\tau^{\infty}$ by $\infty$, since $0$ is inaccessible, and using the fact that $\mathbb{P}_x(\tau^{k}\leq t)\underset{k\rightarrow \infty}{\longrightarrow} 0$ for any fixed $t$, we see that $\phi_k(x,t) \underset{k\rightarrow \infty}{\longrightarrow} 0.$
\end{proof}
\noindent The next lemma ensures that the processes $(Z_t,t\geq 0)$ defined above are extensions of the minimal logistic CSBP.
\begin{lemma}[Extension]\label{extension}
The limiting processes $(Z_t,t\geq 0)$ in Lemma \ref{weakconv} and Lemma \ref{weakconvexit} stopped at $\zeta_\infty$ have the same law as $(\zmin_t,t\geq 0)$.
\end{lemma}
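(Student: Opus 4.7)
The plan is to check that the stopped process $\tilde Z_t:=Z_{t\wedge \zeta_\infty}$ solves the martingale problem \textbf{(MP)} for $\Psi$, and then invoke the uniqueness from Lemma~\ref{wellposed}. Each approximating process $Z^{(k)}$ is non-explosive since $\pi_k$ has compact support, so $\int^{\infty}\log(u)\,\pi_k(\ddr u)<\infty$ and $\mathcal{E}_k=\infty$ by the remark after Theorem~\ref{explosion}. Theorem~\ref{entranceboundarytheo} applied to $\Psi_k$ together with Lemma~\ref{existencemin} then yields that $f(Z^{(k)}_t)-\int_0^t \mathscr{L}^{(k)}f(Z^{(k)}_s)\,\ddr s$ is a bounded martingale under $\mathbb{P}_z$ for every $z\in[0,\infty]$ and every $f\in C_c^2((0,\infty))$.

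Next, I claim that if $\mathrm{supp}(f)\subset [a,b]\subset(0,\infty)$, then $\mathscr{L}^{(k)}f\equiv \mathscr{L} f$ on $[0,\infty)$ for every $k>b$. The diffusive and linear-drift terms of the two generators agree, $\pi_k=\pi$ on $(0,1)$, and for $k>b$ and every $z\geq 0$ one has $f(z+u)=f(z+k)=0$ for $u\geq k$. A direct calculation gives
\[ \int [f(z+u)-f(z)](\pi_k-\pi)(\ddr u) = f(z)\bar\pi(k)-(\bar\pi(k)+\lambda)f(z)=-\lambda f(z),\]
so $\mathscr{L}^{(k)}f(z)-\mathscr{L} f(z)=\lambda z f(z)+z\cdot(-\lambda f(z))=0$. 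The same computation at $z>b$ shows $\mathscr{L} f(z)=0$ there, whence $\mathscr{L} f$ extends continuously to $[0,\infty]$ with $\mathscr{L} f(0)=\mathscr{L} f(\infty)=0$.

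Thus for every $k>b$ the bounded martingale reads $M^{(k)}_t=f(Z^{(k)}_t)-\int_0^t \mathscr{L} f(Z^{(k)}_s)\,\ddr s$. Since $f$ and $\mathscr{L} f$ both lie in $C_b([0,\infty])$ and $Z^{(k)}\Rightarrow Z$ in $\mathcal{D}([0,\infty])$ by Lemmas~\ref{weakconv}--\ref{weakconvexit}, testing the martingale identity against bounded continuous functionals of $(Z^{(k)}_{s_1},\dots,Z^{(k)}_{s_n})$ at continuity points of the limit law transfers the martingale property to $M_t:=f(Z_t)-\int_0^t \mathscr{L} f(Z_s)\,\ddr s$ under $\mathbb{P}_z$ for every $z\in[0,\infty]$. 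Optional stopping at the hitting time $\zeta_\infty$, together with $\mathscr{L} f(\infty)=0$, then yields that $f(\tilde Z_t)-\int_0^t \mathscr{L} f(\tilde Z_s)\,\ddr s$ is a martingale; combined with the fact that $0$ is exit or natural for $Z$ by Corollary~\ref{zeroboundary} (hence absorbing for $\tilde Z$) and that $\infty$ is absorbing for $\tilde Z$ by construction, $\tilde Z$ is a c\`adl\`ag $[0,\infty]$-valued Markov process with $0$ and $\infty$ absorbing solving \textbf{(MP)}. Lemma~\ref{wellposed} then gives $(\tilde Z_t,t\geq 0)\stackrel{\mathcal L}=(\zmin_t,t\geq 0)$.

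The main obstacle is passing the martingale property through the weak limit on the compactified state space $[0,\infty]$; this is clean here because the generator approximation is \emph{exact} for $k>\sup\mathrm{supp}(f)$ and $\mathscr{L} f$ extends continuously to the accessible boundary $\infty$ with value $0$, so no truncation near $\infty$ is needed.
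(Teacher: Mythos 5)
Your proposal is correct and follows essentially the same route as the paper: show that the stopped limit solves the martingale problem \textbf{(MP)} by comparing $\mathscr{L}^{(k)}$ with $\mathscr{L}$ on compactly supported test functions, pass the martingale property through the weak limit, stop at $\zeta_\infty$ using that $\mathscr{L}f$ vanishes at $\infty$, and conclude by the uniqueness of Lemma \ref{wellposed}. The only (cosmetic) differences are that you observe the exact identity $\mathscr{L}^{(k)}f=\mathscr{L}f$ for $k>\sup\mathrm{supp}(f)$ where the paper settles for uniform convergence and cites Ethier--Kurtz Lemma 5.1, and that absorption at $0$ is read off Corollary \ref{zeroboundary} rather than directly from the duality formula $\mathbb{E}_0[e^{-xZ_t}]=1$.
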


\begin{proof} By definition, $\infty$ is absorbing for $(Z_{t\wedge \zeta_\infty},t\geq 0)$. By the semigroup representation obtained in Lemma \ref{weakconv} and Lemma \ref{weakconvexit}, we see that $0$ is absorbing for the process $(Z_t,t\geq 0)$. Therefore it is absorbing for the process $(Z_{t\wedge \zeta_\infty},t\geq 0)$. It remains only to see if $(Z_{t\wedge \zeta_\infty},t\geq 0)$ satisfies the martingale problem (\textbf{MP}). If this holds true, Lemma \ref{wellposed} will ensure that $(Z_{t\wedge \zeta_\infty}, t\geq 0)$ has the same law as $(\zmin_t,t\geq 0)$. Denote by $\mathscr{L}^{(k)}$ the generator of $(Z_t^{(k)},t\geq 0)$ as defined in Equation (\ref{generatormin}). For any function $f\in C^2_c((0,\infty))$, one has 

\begin{align*}
||\mathscr{L}^{(k)}f -\mathscr{L}f||_\infty&=\sup_{z\in [0,\infty[}\left| f(z+k)(\bar{\pi}(k)+\lambda)-\int_{k}^{\infty}f(z+u)\pi(\ddr u)\right|\\
&\leq 2(\bar{\pi}(k)+\lambda)\!\!\!\sup_{z\in [k,\infty[}\!\!|f(z)| \underset{k\rightarrow \infty}{\longrightarrow} 0.
\end{align*}
For any $z\in (0,\infty)$ and any $k\geq 1$, under $\mathbb{P}_z$, the process $\left(f(Z^{(k)}_t)-\int_{0}^{t}\mathscr{L}^{(k)}f(Z_s^{(k)})\ddr s, t\geq 0\right)$
is a martingale. The convergence above being uniform, Lemma 5.1 page 196 in \cite{EthierKurtz} entails that the process
\[\left(f(Z_{t})-\int_{0}^{t}\mathscr{L}f(Z_s)\ddr s, t\geq 0\right)\] is a martingale. By \cite[Corollary III.3.6]{MR1725357}, the latter process stopped at time $\zeta_\infty$, $$\left(f(Z_{t\wedge \zeta_\infty})-\int_{0}^{t\wedge \zeta_\infty}\mathscr{L}f(Z_s)\ddr s, t\geq 0\right)$$ is a martingale. Now, since $f$ has a compact support then for any $t\geq 0$, $$\int_{0}^{t}\mathscr{L}f(Z_{s\wedge \zeta_\infty})\ddr s=\int_{0}^{t\wedge \zeta_\infty}\mathscr{L}f(Z_{s\wedge \zeta_\infty})\ddr s+\int_{t\wedge \zeta_\infty}^t\mathscr{L}f(Z_{s\wedge \zeta_\infty})\ddr s=\int_{0}^{t\wedge \zeta_\infty}\mathscr{L}f(Z_{s\wedge \zeta_\infty})\ddr s.$$
Indeed, in the first equality above, either $t\leq \zeta_\infty$ and the second integral  vanishes or $t>\zeta_\infty$ and $\mathcal{L}f(Z_{s\wedge \zeta_\infty})=0$ for any $s>\zeta_\infty$, since $f$ has a compact support. We deduce that for any $f\in C^2_c((0,\infty))$, the process
$$\left(f(Z_{t\wedge \zeta_\infty})-\int_{0}^{t}\mathscr{L}f(Z_{s\wedge \zeta_\infty})\ddr s, t\geq 0\right)$$ is a martingale. Therefore the process $(Z_{t\wedge \zeta_\infty},t\geq 0)$ has the same law as  $(\zmin_t,t\geq 0)$. Note that by Lemma \ref{explosionlemma}, since $\mathcal{E}<\infty$, then $(\zmin_t,t\geq 0)$ explodes with a positive probability. This ensures that  $\infty$ is accessible for the process $(Z_t,t\geq 0)$.
\end{proof}
\begin{lemma}[Reflecting boundary] \label{reflectinglemmaZ} 
Assume $\mathcal{E}<\infty$ and $0\leq\frac{2\lambda}{c}<1$. The boundary $\infty$ of the Feller process $(Z_t, t\geq 0)$  is instantaneous regular reflecting. Moreover, $\mathbb{P}_\infty$-almost-surely, $(Z_t,t\geq 0)$ enters $(0,\infty)$ continuously.
\end{lemma}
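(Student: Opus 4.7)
The plan is to extract all three properties (instantaneous, reflecting, regular) from a single computation: the distribution of $Z_t$ under $\mathbb{P}_\infty$ for $t>0$, read off via the semi-group duality of Lemma \ref{weakconv}. Letting $z\to\infty$ in the identity $\mathbb{E}_z[e^{-xZ_t}]=\mathbb{E}_x[e^{-zU^0_t}]$ for $x>0$, and using the convention $e^{-\infty}=0$, I would obtain
\[\mathbb{E}_\infty[e^{-xZ_t}]=\mathbb{P}_x(U^0_t=0)=\mathbb{P}_x(\tau_0\leq t),\]
where the last equality uses that $0$ is absorbing for $U^0$. Denote by $\eta^0_t$ the law of $Z_t$ under $\mathbb{P}_\infty$ on $[0,\infty]$; the identity above is its Laplace transform on $(0,\infty)$, and the mass that $\eta^0_t$ puts on $\{\infty\}$ equals $1-\lim_{x\to 0+}\mathbb{P}_x(\tau_0\leq t)$.

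The crucial step is to show that $\lim_{x\to 0+}\mathbb{P}_x(\tau_0\leq t)=1$ for every $t>0$. By Lemma \ref{entranceU}, the boundary $0$ is regular for the diffusion solving (\ref{generalizedFellerdiffusionequation}) under the hypotheses $\mathcal{E}<\infty$ and $0\leq 2\lambda/c<1$. The classical theory of one-dimensional diffusions, via the explicit expression of $\mathbb{E}_x[\tau_0]$ in terms of the scale function and speed measure of $(U_t,t\geq 0)$ (as developed in the Appendix for classifying the boundaries), yields $\mathbb{E}_x[\tau_0]\to 0$ as $x\to 0+$; Markov's inequality then gives $\mathbb{P}_x(\tau_0> t)\to 0$. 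Consequently, $\eta^0_t$ is a probability measure on $[0,\infty)$, i.e.\ $\mathbb{P}_\infty(Z_t<\infty)=1$ for every $t>0$.

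All announced properties then follow. Applying the preceding identity along a countable dense set of times $t>0$ and using the right-continuity of the Feller process $(Z_t,t\geq 0)$ provided by Lemma \ref{weakconv}, one gets $T=\inf\{t>0:Z_t<\infty\}=0$, $\mathbb{P}_\infty$-a.s., so that $\infty$ is left instantaneously; combined with the accessibility of $\infty$ established in Lemma \ref{extension} (via Lemma \ref{explosionlemma}), this places $\infty$ in the regular class of Feller's classification. By Fubini,
\[\mathbb{E}_\infty\Bigl[\int_0^\infty \mathbbm{1}_{\{Z_t=\infty\}}\,\ddr t\Bigr]=\int_0^\infty \mathbb{P}_\infty(Z_t=\infty)\,\ddr t=0,\]
so the Lebesgue measure of $\{t\geq 0:Z_t=\infty\}$ vanishes $\mathbb{P}_\infty$-a.s., which is the reflecting property. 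Finally, since $[0,\infty]$ is compact and $(Z_t)$ is c\`adl\`ag in this topology, the right-continuity at $0$ forces $Z_t\to\infty=Z_0$ as $t\downarrow 0$; as $Z_t$ is finite for every $t>0$ a.s., the path approaches $\infty$ through finite values, which is the continuous entry into $(0,\infty)$.

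The only subtle point is the classical identity $\lim_{x\to 0+}\mathbb{P}_x(\tau_0\leq t)=1$ at a regular boundary; this is the main (though entirely classical) obstacle and is essentially the content of the regularity classification used throughout the Appendix. Everything else is a clean consequence of inserting $z=\infty$ into the duality and of the general structure of c\`adl\`ag Feller processes on the compact state space $[0,\infty]$.
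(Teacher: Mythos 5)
Your proposal follows essentially the same route as the paper's proof: let $z\to\infty$ in the duality of Lemma \ref{weakconv} to obtain $\mathbb{E}_\infty[e^{-xZ_t}]=\mathbb{P}_x(\tau_0\leq t)>0$ (which, combined with the accessibility of $\infty$ from Lemma \ref{extension}, yields regularity), let $x\to 0+$ to obtain $\mathbb{P}_\infty(Z_t<\infty)=1$ for every $t>0$ (which yields both the reflection, via the Lebesgue-measure argument, and the instantaneity, via $\mathbb{P}_\infty(T\leq t)\geq \mathbb{P}_\infty(Z_t<\infty)=1$), and invoke right-continuity at $t=0$ for the continuous entrance. The one place where you supply a justification that the paper leaves implicit --- the limit $\lim_{x\to 0+}\mathbb{P}_x(\tau_0\leq t)=1$, which the paper condenses into $\mathbb{E}_{0+}(e^{-zU^0_t})=1$ --- is also the one place where your argument needs a small repair: the claim that $\mathbb{E}_x[\tau_0]\to 0$ cannot be taken literally, because when $-\Psi$ is the Laplace exponent of a subordinator and condition \textbf{(A)} fails, $U^0$ tends to $\infty$ with positive probability (Lemma \ref{stationarylawlemma}), so $\tau_0=\infty$ on a non-null event and $\mathbb{E}_x[\tau_0]=+\infty$ for every $x>0$. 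The standard fix is to localize: fix $b>0$; the accessibility computations of Lemma \ref{absorbing0} (finiteness of $|s(0)|$ and of $\int_{0+}(s(x)-s(0))m(x)\,\ddr x$ when $0\leq 2\lambda/c<1$) give $\mathbb{P}_x(\tau_0<\tau_b)\to 1$ and $\mathbb{E}_x[\tau_0\wedge\tau_b]\to 0$ as $x\to 0+$, and Markov's inequality applied to $\tau_0\wedge\tau_b$ then yields $\mathbb{P}_x(\tau_0\leq t)\to 1$. With that correction your argument is complete and coincides with the paper's.
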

\begin{proof}
Recall that we assume $\mathcal{E}<\infty$ and $\frac{2\lambda}{c}<1$. Accordingly to Lemma \ref{extension}, $\infty$ is accessible. Moreover, for every $t\geq 0$, $$\mathbb{E}_{\infty}[e^{-xZ_t}]=\mathbb{P}_{x}(U^0_{t}=0)=\mathbb{P}_{x}(\tau_0\leq t)>0$$ since  $\frac{2\lambda}{c}<1$. Thus, $\infty$ is a regular boundary. We verify now that $\infty$ is reflecting. Since $(U^0_t,t\geq 0)$ has $0$ regular absorbing then  $$\mathbb{P}_z(Z_t<\infty)=\underset{x\rightarrow 0 \atop x>0}{\lim} \mathbb{E}_z(e^{-xZ_t})=\underset{x\rightarrow 0 \atop x>0}{\lim} \mathbb{E}_x(e^{-zU^{0}_t})=\mathbb{E}_{0+}(e^{-zU^0_t})=1.$$ This ensures that the Lebesgue measure of the set of times at which the process is at $\infty$ is zero.  In other words, $\infty$ is reflecting.  We show now that $\infty$ is instantaneous. Let $T=\inf\{t\geq 0, Z_t<\infty\}$. For any $t>0$, $\mathbb{P}_{\infty}(T\leq t)\geq \mathbb{P}_{\infty}(Z_t<\infty)=1$, then by letting $t$ to $0$, one has $\mathbb{P}_\infty(T=0)=1$. By right-continuity of $(Z_t,t\geq 0)$, we have $\mathbb{P}_\infty(\underset{t\rightarrow 0}{\lim}Z_t=\infty)=1$.
\end{proof}
\begin{lemma}[Exit boundary]\label{exitlemmaZ}
Assume $\frac{2\lambda}{c}\geq 1$. The boundary $\infty$ of the Feller process $(Z_t, t\geq 0)$  is an exit.
\end{lemma}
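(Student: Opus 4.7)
The plan is to combine the duality formula from Lemma \ref{weakconvexit} with the fact that, under the assumption $\frac{2\lambda}{c}\geq 1$, the boundary $0$ of the generalized Feller diffusion $(U_t,t\geq 0)$ is an entrance. Accessibility of $\infty$ for $(Z_t,t\geq 0)$ is granted for free: Lemma \ref{explosionlemma} gives $\mathbb{P}_z(\zeta_\infty<\infty)>0$ for the minimal process whenever $\mathcal{E}<\infty$, and Lemma \ref{extension} transfers this to $(Z_t,t\geq 0)$. So the whole content is to show that $\infty$ is absorbing, i.e.\ $\mathbb{P}_\infty(Z_t=\infty)=1$ for every $t\geq 0$.

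To this end I would take $z=\infty$ in the duality identity of Lemma \ref{weakconvexit}: for any $x\in(0,\infty)$ and any $t\geq 0$,
\begin{equation*}
\mathbb{E}_\infty\bigl[e^{-xZ_t}\bigr]\;=\;\mathbb{E}_x\bigl[e^{-\infty\cdot U_t}\bigr]\;=\;\mathbb{P}_x(U_t=0),
\end{equation*}
using the convention $e^{-\infty\cdot 0}=1$ and $e^{-\infty\cdot u}=0$ for $u>0$ recalled before Theorem \ref{entranceboundarytheo}. By Lemma \ref{entranceU}, the assumption $\frac{2\lambda}{c}\geq 1$ makes $0$ an entrance boundary of the diffusion $(U_t,t\geq 0)$, hence inaccessible from $(0,\infty)$: for any $x\in(0,\infty)$ and any $t\geq 0$ one has $\mathbb{P}_x(U_t=0)=0$.

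Plugging this into the previous display gives $\mathbb{E}_\infty[e^{-xZ_t}]=0$ for every $x>0$ and every $t\geq 0$. Letting $x\downarrow 0$ with monotone convergence (or by the convention $e^{-0\cdot\infty}=1$ together with Fatou) forces $\mathbb{P}_\infty(Z_t<\infty)=0$, so $Z_t=\infty$ $\mathbb{P}_\infty$-almost surely for every $t\geq 0$. Thus $\infty$ is absorbing for the Feller extension $(Z_t,t\geq 0)$, and since it is accessible it is by definition an exit boundary.

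I do not foresee any serious obstacle: the extension lemma has already put $\infty$ on the state space and certified it is reached with positive probability, the duality formula is available from Lemma \ref{weakconvexit}, and Lemma \ref{entranceU} has done the analytic work of classifying $0$ for $U$. The only mild point to be careful about is the convention for $e^{-\infty \cdot U_t}$ at $U_t=0$, which is exactly why the entrance property of $0$ for $U$ (and not mere inaccessibility from $(0,\infty)$) is the right input.
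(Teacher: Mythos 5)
Your proposal is correct and follows essentially the same route as the paper: accessibility of $\infty$ comes from Lemma \ref{extension} (explosion of the minimal process when $\mathcal{E}<\infty$), and absorption comes from setting $z=\infty$ in the duality of Lemma \ref{weakconvexit} to get $\mathbb{E}_\infty[e^{-xZ_t}]=\mathbb{P}_x(U_t=0)=0$ because $0$ is an entrance boundary of $U$. Your extra care with the limit $x\downarrow 0$ and the conventions at the boundary is a harmless elaboration of what the paper leaves implicit.
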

\begin{proof}
Firstly, by Lemma \ref{extension}, we know that the process $(Z_t, t\geq 0)$ explodes with positive probability (note that $\lambda>0$, so that it will explode by a jump with positive probability). Moreover, by Lemma \ref{weakconv}, letting $z\rightarrow \infty$, we obtain $\mathbb{E}_\infty[e^{-xZ_t}]=\mathbb{P}_x(U_t=0)=0$ since $0$ is an entrance boundary of $U$. Therefore, $(Z_t,t\geq 0)$ get absorbed at $\infty$. Note that for any $t>0$, $\mathbb{P}_{z}(\zeta_\infty<t)=1-\mathbb{E}_{0+}[e^{-xU_t}].$
\end{proof}
The proof of Theorem \ref{regularboundarytheo} now follows by combining Lemmas \ref{weakconv}, \ref{extension}, \ref{reflectinglemmaZ}.  Theorem \ref{exitboundarytheo} follows from Lemmas \ref{weakconvexit}, \ref{extension} and \ref{exitlemmaZ}. In order to understand the long-term behavior of the extended process $(Z_t,t\geq 0)$, we establish now Corollary \ref{zeroboundary}, Corollary  \ref{StationarityZ} and Theorem \ref{longtermtheo} in the case $\mathcal{E}<\infty$.  The arguments for Corollary \ref{zeroboundary} and  Corollary \ref{StationarityZ} are exactly the same as those in Lemma \ref{0access} and Lemma \ref{stationarityandalmostsureabsorptionEinfini}
for the case $\mathcal{E}=\infty$ (but with $\lambda\geq 0$). Indeed, according to Lemma \ref{stationarylawlemma}, the dual diffusion $(U^{0}_t,t\geq  0)$ can leave the interval $(0,\infty)$ by $\infty$ only when the condition \textbf{(A)} is not satisfied. We show Theorem \ref{longtermtheo} in the case $\mathcal{E}<\infty$.
\begin{lemma} Assume $0\leq \frac{2\lambda}{c}<1$, $\mathcal{E}<\infty$ and $\Psi(z)\geq 0$ for some $z>0$, then the boundary $0$ is attracting almost-surely: $Z_t\underset{t\rightarrow \infty}{\longrightarrow} 0$ a.s.  If $\int^{\infty}\frac{\ddr z}{\Psi(z)}=\infty$ then $Z_t>0$ for all $t\geq 0$ a.s, and if $\int^{\infty}\frac{\ddr z}{\Psi(z)}<\infty$, then for any $z\in [0,\infty]$, $\mathbb{P}_z(\zeta_0<\infty)=1$.
\end{lemma}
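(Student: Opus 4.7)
My plan is to combine the duality of Lemma~\ref{weakconv} with a submartingale argument driven by the generator duality. Since $\Psi(z)\geq 0$ for some $z>0$, $-\Psi$ is not a subordinator Laplace exponent, so Lemma~\ref{stationarylawlemma}(1) applied to the dual diffusion $(U^0_t,t\geq 0)$ yields $U^0_t\to 0$ $\mathbb{P}_x$-almost surely for every $x\geq 0$, and since $0$ is regular absorbing the absorption time $\tau_0:=\inf\{t:U^0_t=0\}$ is $\mathbb{P}_x$-a.s.\ finite. These facts, inserted into the identity $\mathbb{E}_z[e^{-xZ_t}]=\mathbb{E}_x[e^{-zU^0_t}]$, are the basic input.

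For the accessibility of $0$, I separate the two subcases of Grey's condition, which by Lemma~\ref{entranceU} determines whether $\infty$ is entrance or natural for $U^0$. If $\int^\infty du/\Psi(u)<\infty$, I let $x\to\infty$ in the duality; by continuity of the entrance semigroup at $\infty$ this gives $\mathbb{P}_z(Z_t=0)=\mathbb{E}_\infty[e^{-zU^0_t}]$, and then $t\to\infty$ yields $\mathbb{P}_z(\zeta_0<\infty)=1$ because $U^0$ from $\infty$ eventually reaches $0$ almost surely. If $\int^\infty du/\Psi(u)=\infty$, the same limit $x\to\infty$ instead gives $0$, since $U^0_t$ escapes to $\infty$ in $\mathbb{P}_x$-distribution for any fixed $t$; hence $\mathbb{P}_z(Z_t=0)=0$ for every $t$, which yields $Z_t>0$ for all $t$ almost surely. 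The claim for $z=\infty$ in the Grey case follows by the same limit together with $\mathbb{P}_\infty(\tau_0<\infty)=1$.

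The heart of the proof is the a.s.\ convergence $Z_t\to 0$. By the assumption I can pick $x>0$ with $\Psi(x)\geq 0$. The generator duality (Lemma~\ref{generatordualitylemma}) then gives
\[
\mathscr{L}e_x(z) = z e^{-xz}\!\left(\Psi(x)+\tfrac{c}{2}x z\right) \geq 0 \quad \text{for all } z\in [0,\infty).
\]
From this I will deduce $P_t e_x\geq e_x$ pointwise, so that $(e^{-xZ_t})_{t\geq 0}$ is a $[0,1]$-valued submartingale under $\mathbb{P}_z$. The submartingale convergence theorem then produces an almost-sure and $L^1$ limit $M_\infty\in[0,1]$ with $\mathbb{E}_z[M_\infty]=\lim_{t\to\infty}\mathbb{E}_z[e^{-xZ_t}]=\lim_{t\to\infty}\mathbb{E}_x[e^{-zU^0_t}]=1$; this forces $M_\infty=1$ $\mathbb{P}_z$-a.s., i.e.\ $Z_t\to 0$ a.s.

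The main obstacle is the submartingale identification, since $e_x\notin C^2_c((0,\infty))$ and so the martingale problem (MP) cannot be applied directly. The cleanest way around this is to differentiate the right-hand side $P_t e_x(z)=\mathbb{E}_x[e^{-zU^0_t}]$ of the duality via It\^o's formula on $U^0$ (as in the proof of Lemma~\ref{dualnonexplosive}): the resulting drift coincides by generator duality with $\mathbb{E}_z[\mathscr{L}e_x(Z_t)]\geq 0$, so $t\mapsto P_te_x(z)$ is nondecreasing for every $z$, which by the Markov property upgrades to the submartingale property. Alternatively one can approximate $e_x$ by products $e_x\chi_N$ with smooth compactly supported cutoffs $\chi_N$ and pass to the limit using the Feller structure established in Lemma~\ref{weakconv}.
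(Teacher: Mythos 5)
Your overall architecture is sound, and the accessibility discussion (letting $x\to\infty$ in the duality and reading off the entrance/natural dichotomy at $\infty$ for $U^0$) is exactly the paper's argument. The interesting divergence is in the almost-sure convergence $Z_t\to 0$: the paper does not use a submartingale. It first gets convergence in probability from the duality (as you do), then defines successive passage times $\zeta_a^{(n)}$ below a level $a>0$ between consecutive explosions, observes by the Markov property that the excursions started at these times are independent copies of the minimal process started from (below) $a$, and invokes Lemma \ref{longtermminimal} to give each of them a fixed positive probability of never exploding again and drifting to $0$; independence then forces one of these attempts to succeed almost surely. Your submartingale route is genuinely different and, if completed, arguably cleaner, since it avoids the excursion decomposition entirely.

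However, the step where you establish the submartingale property has a real gap. Differentiating the right-hand side $\mathbb{E}_x[e^{-zU^0_t}]$ by It\^o gives a drift $\mathbb{E}_x\bigl[\mathscr{A}e_z(U^0_t)\mathbbm{1}_{\{t<\tau_0\}}\bigr]$ with $\mathscr{A}e_z(u)=ze^{-zu}\bigl(\Psi(u)+\tfrac{c}{2}uz\bigr)$. The generator duality $\mathscr{A}e_z(x)=\mathscr{L}e_x(z)$ is a pointwise identity in the pair $(x,z)$; it does not convert this expectation into $\mathbb{E}_z[\mathscr{L}e_x(Z_t)]$ --- the operator acts on the variable being averaged, so the relevant integrand is $\mathscr{L}e_{U^0_t}(z)$, not $\mathscr{L}e_x(\cdot)$ for your fixed good $x$. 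Since we only assume $\Psi(z)\geq 0$ for \emph{some} $z$, $\Psi$ is typically negative on $(0,\rho)$, the diffusion $U^0$ started from $x\geq\rho$ visits that region, and the integrand $ze^{-zU^0_t}(\Psi(U^0_t)+\tfrac{c}{2}zU^0_t)$ has no definite sign; asserting that its expectation is nonnegative is essentially the monotonicity you are trying to prove. The clean way to rescue your plan is on the $Z$-side via the approximants: for the non-explosive $\Psi_k$-processes the time-change argument in the proof of Lemma \ref{dualnonexplosive} already shows that $e^{-xZ^{(k)}_t}-\int_0^t\mathscr{L}^{(k)}e_x(Z^{(k)}_s)\,\ddr s$ is a true martingale; one checks $\Psi_k(x)\geq\Psi(x)\geq 0$, so $\mathscr{L}^{(k)}e_x(z)=ze^{-xz}(\Psi_k(x)+\tfrac{c}{2}xz)\geq 0$ and hence $P^{(k)}_te_x\geq e_x$; the uniform convergence of Lemma \ref{weakconv} passes this to $P_te_x\geq e_x$, and the Markov property then yields the submartingale property, after which the rest of your argument goes through.
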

\begin{proof} Let $a>0$. Define  $\zeta_{\infty}^{(0)}:=0$, $\zeta_a^{(n)}:=\inf\{t>\zeta_{\infty}^{(n-1)}; Z_t\leq a\}$ and $\zeta_{\infty}^{(n)}:=\inf\{t>\zeta_a^{(n)}; Z_t=\infty\}$. By Lemma \ref{stationarylawlemma}-1), since $-\Psi$ is not the Laplace exponent of a subordinator then $U_t^{0}\underset{t\rightarrow \infty}{\longrightarrow} 0$ and by duality $\mathbb{E}_z[e^{-xZ_t}]\underset{t\rightarrow \infty}{\longrightarrow} 1$. Therefore $Z_t\underset{t\rightarrow \infty}{\longrightarrow} 0$ in probability.  Since $\{Z_t\leq a\}\subset \{\zeta^{(1)}_{a}\leq t\}$ and $\mathbb{P}_z(Z_t\leq a)\underset{t\rightarrow \infty}{\longrightarrow} 1$ then $\zeta^{(1)}_{a}<\infty$ a.s. Applying the Markov property at $\zeta_a^{(n)}$, we see that the processes $(Z_{(t+\zeta_a^{(n)})\wedge \zeta_\infty^{(n)}},t\geq 0)$ are independent and with the same law as the minimal process starting from $a$. Set $E_n=\{Z_{(t+\zeta_a^{(n)})\wedge \zeta_\infty^{(n)}} \underset{t\rightarrow \infty}{\longrightarrow} 0\}$ for any $n\geq 1$. By Lemma \ref{longtermminimal}, $\mathbb{P}_{z}(E_n)=\mathbb{P}_a(\sigma_0<\infty)>0$. By independence, $\mathbb{P}_z\left(\bigcap_{n=1}^{\infty}E^{c}_n\right)=0$ and therefore $Z_t\underset{t\rightarrow \infty}{\longrightarrow} 0$ a.s. Assume $\int^{\infty}\frac{\ddr z}{\Psi(z)}=\infty$, since $\infty$ is a natural boundary of $(U_t^{0},t\geq 0)$ therefore by duality, $\mathbb{P}_z(\zeta_0\leq t)=\mathbb{E}_{\infty}(e^{-zU_t^{0}})=0$ for any $t\geq 0$. Thus $Z_t>0$ for all $t\geq 0$ a.s. Now if $\int^{\infty}\frac{\ddr z}{\Psi(z)}<\infty$, $\infty$ is an entrance of $(U_t^{0},t\geq 0)$ and by duality $\mathbb{P}_z(\zeta_0\leq t)=\mathbb{E}_{\infty}(e^{-zU_t^{0}})>0$. By Lemma \ref{stationarylawlemma}-1), $U_t^{0}\underset{t\rightarrow \infty}{\longrightarrow} 0$ a.s and thus $\mathbb{P}_z(\zeta_0\leq t)\underset{t\rightarrow \infty}{\longrightarrow} \mathbb{P}_z(\zeta_0< \infty)=1$. This proves Theorem \ref{longtermtheo}-1.
\end{proof}
It remains to study the process when $\frac{2\lambda}{c}\geq 1$. 
\begin{lemma}\label{extinctionexit} Assume $\frac{2\lambda}{c}\geq 1$.  \begin{itemize}
\item[a)] If $-\Psi$ is the Laplace exponent of a subordinator then the process is absorbed at $\infty$ almost-surely. 
\item[b)] If $-\Psi$ is not the Laplace exponent of a subordinator then the process tends to $0$ with probability: 
$$\mathbb{P}_z(Z_t\underset{t\rightarrow \infty}{\longrightarrow} 0)=\frac{\int_{0}^{\infty}e^{-zu}\frac{1}{u}\exp\left(-\int_{\theta}^{u}\frac{2\Psi(v)}{cv}\ddr v\right)\ddr u}{\int_{0}^{\infty}\frac{1}{u}\exp\left(-\int_{\theta}^{u}\frac{2\Psi(v)}{cv}\ddr v\right)\ddr u}\in (0,1).$$
On this event, the process get absorbed at $0$ if and only if $\int^\infty\frac{\ddr u}{\Psi(u)}<\infty$.
\end{itemize}
\end{lemma}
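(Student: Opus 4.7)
The plan is to split along the two assertions and lean on the duality identity of Theorem \ref{exitboundarytheo} together with the time-change construction of the minimal process. Since $\frac{2\lambda}{c}\geq 1$ forces $\lambda>0$, one automatically has $\mathcal{E}<\infty$, and by Theorem \ref{exitboundarytheo} the extended process $(Z_t,t\geq 0)$ coincides with $(\zmin_t,t\geq 0)$ on $[0,\zeta_\infty)$ and remains at $\infty$ thereafter. For part (a), when $-\Psi$ is the Laplace exponent of a subordinator, Lemma \ref{irreducibleclass} gives $\mathbb{P}_z(\zeta_\infty<\infty)=1$, and since $\infty$ is an exit the process is absorbed at $\infty$ almost-surely.

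For part (b), the time-change construction yields at once the dichotomy $\{Z_t\to 0\}=\{\sigma_0<\infty\}$ and $\{\zeta_\infty<\infty\}=\{\sigma_0=\infty\}$. Lemma \ref{longtermminimal} then supplies the explicit formula for $\mathbb{P}_z(\sigma_0<\infty)$, and in the non-subordinator case both events have positive probability (by Lemma \ref{explosionlemma} and Lemma \ref{irreducibleclass}), so the probability lies strictly between $0$ and $1$.

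It remains to identify $\{Z_t\to 0\}$ with $\{\zeta_0<\zeta_\infty\}$ in the two subcases. By duality, for any $a>0$ and $t>0$,
\[\mathbb{P}_a(\zeta_0\leq t)=\lim_{x\to\infty}\mathbb{E}_a[e^{-xZ_t}]=\lim_{x\to\infty}\mathbb{E}_x[e^{-aU_t}].\]
If $\int^{\infty}\frac{\ddr u}{\Psi(u)}=\infty$, then $\infty$ is natural for $U$ by Lemma \ref{entranceU}, the right-hand side vanishes, $\mathbb{P}_a(\zeta_0<\infty)=0$, and hence $Z_t>0$ for all $t\geq 0$ almost-surely. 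If instead $\int^{\infty}\frac{\ddr u}{\Psi(u)}<\infty$, then $\infty$ is entrance for $U$ and the limit equals $\mathbb{E}_\infty[e^{-aU_t}]>0$. Monotone convergence combined with the entrance property further gives
\[\mathbb{E}_\infty[e^{-aU_1}]\underset{a\to 0+}{\longrightarrow}\mathbb{P}_\infty(U_1<\infty)=1,\]
so that $\sup_{0<a\leq 1/n}\mathbb{P}_a(\zeta_0>1)\to 0$ as $n\to\infty$. Setting $T_n:=\inf\{t\geq n:Z_t\leq 1/n\}$, on the event $\{Z_t\to 0\}$ every $T_n$ is eventually finite, and the strong Markov property yields
\[\mathbb{P}_z(\zeta_0=\infty,\ Z_t\to 0)\leq \mathbb{E}_z\left[\mathbbm{1}_{\{T_n<\infty\}}\mathbb{P}_{Z_{T_n}}(\zeta_0>1)\right]\leq \sup_{0<a\leq 1/n}\mathbb{P}_a(\zeta_0>1)\to 0,\]
which gives $\{Z_t\to 0\}\subseteq\{\zeta_0<\infty\}=\{\zeta_0<\zeta_\infty\}$ almost-surely, the reverse inclusion being trivial.

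The main technical obstacle is the uniform bound $\sup_{a\leq 1/n}\mathbb{P}_a(\zeta_0>1)\to 0$, which relies crucially on $\mathbb{P}_\infty(U_1<\infty)=1$ coming from the entrance property of $\infty$ for $U$; everything else amounts to gluing together the duality identity, the dichotomy for the Ornstein-Uhlenbeck type process $(R_t,t\geq 0)$, and the strong Markov property.
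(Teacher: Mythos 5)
Your proof is correct and follows the paper's route for the bulk of the statement: you reduce to the minimal process via Lemma \ref{extension} and Lemma \ref{exitlemmaZ}, invoke Lemma \ref{irreducibleclass} for part a), and read off the formula and the dichotomy $\{Z_t\to 0\}=\{\sigma_0<\infty\}$, $\{\zeta_\infty<\infty\}=\{\sigma_0=\infty\}$ from the time-change together with Lemma \ref{longtermminimal} and Lemma \ref{explosionlemma} for part b). Where you genuinely diverge is the last claim, namely that on $\{Z_t\to 0\}$ absorption at $0$ occurs if and only if Grey's condition holds. The paper disposes of this in one line by asserting $\{\zeta_0<\infty\}=\{\zeta_\infty=\infty\}=\{\zeta_0<\zeta_\infty\}$, leaving implicit the nontrivial inclusion $\{Z_t\to 0\}\subseteq\{\zeta_0<\infty\}$ (which amounts to the a.s.\ finiteness of $\int_0^{\sigma_0}\ddr s/R_s$ on $\{\sigma_0<\infty\}$ under Grey's condition). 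You instead prove this inclusion directly: the duality identity and the entrance property of $\infty$ for $(U_t,t\geq 0)$ give the uniform estimate $\sup_{0<a\leq 1/n}\mathbb{P}_a(\zeta_0>1)=\mathbb{P}_{1/n}(\zeta_0>1)\to 0$ (using monotonicity of $a\mapsto \mathbb{E}_\infty[e^{-aU_1}]$), and the strong Markov property at the hitting times $T_n$ converts convergence to $0$ into finite-time absorption. This is a more self-contained justification of a step the paper leaves terse, at the cost of a slightly longer argument; the inaccessibility of $0$ when $\int^{\infty}\ddr u/\Psi(u)=\infty$ is handled identically in both proofs via the natural boundary of $U$ at $\infty$.
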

\begin{proof} By Lemma \ref{exitlemmaZ}, the process $(Z_t,t\geq 0)$ has $0$ and $\infty$  has absorbing boundary. By Lemma \ref{extension}, it satisfies $(\textbf{MP})$, therefore $(Z_t,t\geq 0)$ has the same law as $(\zmin_t,t\geq 0)$. Proof of a). Assume that $-\Psi$ is the Laplace exponent of a subordinator, then by Lemma \ref{irreducibleclass}, explosion is almost-sure. Proof of b). If now $-\Psi$ is not the Laplace exponent of a subordinator, then by Lemma \ref{longtermminimal} $$\mathbb{P}_z(Z_t\underset{t\rightarrow \infty}{\longrightarrow} 0)=\mathbb{P}_z(\zeta_\infty=\infty)=\mathbb{P}_z(\sigma_0<\infty)>0.$$ 
The process $(Z_t,t\geq 0)$ hit $0$ with positive probability if and only if $\int^{\infty}\frac{\ddr z}{\Psi(z)}<\infty$. When $\int^{\infty}\frac{\ddr z}{\Psi(z)}<\infty$ then $\{\zeta_0<\infty\}=\{\zeta_\infty=\infty\}=\{\zeta_0<\zeta_\infty\}$. 
\end{proof}
Lemma \ref{extinctionexit}-a) gives part 2) of  Theorem \ref{longtermtheo}, and b) gives part 3). 
As mentioned in the introduction, our extension of the minimal process has been done without using excursions theory. In particular, we have not discussed the existence of a local time at $\infty$. We end this article by showing that when $\infty$ is regular reflecting, the process immediately returns to $\infty$ after leaving it. Such a boundary is said to be regular for itself and standard theory, see for instance Bertoin \cite[Chapter IV]{Ber96}, would then ensure the existence of a local time. 
\begin{proposition}
Assume $\mathcal{E}<\infty$ and $0\leq\frac{2\lambda}{c}<1$. Set $S_\infty:=\inf\{t>0; Z_t=\infty\}$, then $\mathbb{P}_\infty(S_\infty=0)=1.$
\end{proposition}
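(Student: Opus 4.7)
The plan is to combine Blumenthal's zero-one law with a quantitative estimate saying that explosion is fast from high starting points, together with the entrance behaviour of the Feller extension $(Z_t)_{t\geq 0}$ at $\infty$.

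First, since $S_\infty$ is a stopping time in the natural filtration of $(Z_t)$, the event $\{S_\infty=0\}=\bigcap_{n\geq 1}\{S_\infty<1/n\}$ lies in the germ $\sigma$-field $\mathcal{F}_{0+}$, and Blumenthal's $0$-$1$ law yields $\mathbb{P}_\infty(S_\infty=0)\in\{0,1\}$. It therefore suffices to prove $\mathbb{P}_\infty(S_\infty\leq \delta)=1$ for every $\delta>0$. The main quantitative input is
\begin{equation*}
(\star)\qquad \lim_{z\to\infty}\mathbb{P}_z(\zeta_\infty\leq \epsilon)=1 \quad\text{for every } \epsilon>0.
\end{equation*}
Granting $(\star)$, fix $\delta,\eta>0$; by $(\star)$ pick $M$ such that $\inf_{z>M}\mathbb{P}_z(\zeta_\infty\leq \delta/2)\geq 1-\eta$. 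The Feller semigroup $(P_t)_{t\geq 0}$ of $Z$ established in Lemma \ref{weakconv} satisfies $P_sf(\infty)\to f(\infty)$ as $s\to 0$ for every $f\in C_b([0,\infty])$, so $Z_s$ converges weakly to $\delta_\infty$ under $\mathbb{P}_\infty$; choose $s\in(0,\delta/2)$ with $\mathbb{P}_\infty(Z_s>M)\geq 1-\eta$. Since for finite $z$ the first visit to $\infty$ of the extended process coincides with the first explosion time, the Markov property at time $s$ yields
\begin{equation*}
\mathbb{P}_\infty(S_\infty\leq \delta)\geq \mathbb{E}_\infty\bigl[\mathbf{1}_{\{Z_s>M\}}\,\mathbb{P}_{Z_s}(\zeta_\infty\leq \delta-s)\bigr]\geq (1-\eta)^2,
\end{equation*}
and letting $\eta\to 0$ then $\delta\to 0$ concludes.

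For $(\star)$ itself, I would use the Lamperti-type time-change of Section \ref{existence}: on $\{\sigma_0=\infty\}$, $\zeta_\infty=\int_0^\infty \ddr s/R_s$ (with the convention $1/\infty=0$), where $R_t=e^{-ct/2}z+e^{-ct/2}\int_0^t e^{cu/2}\,\ddr Y_u$ is the Ornstein-Uhlenbeck type process driven by $Y$. For $z$ large, $R_s$ is dominated by the exponentially decaying deterministic term: on the high-probability event $\{\inf_{u\leq T_0}\int_0^u e^{cv/2}\,\ddr Y_v\geq -K\}$ one has $R_s\geq (z-K)e^{-cs/2}$ on $[0,T_0]$, and hence $\int_0^{T_0}\ddr s/R_s\leq (2/(c(z-K)))(e^{cT_0/2}-1)$ is arbitrarily small for $z\gg K$. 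The tail $\int_{T_0}^\infty \ddr s/R_s$ is controlled by Markov's inequality applied to the Laplace-transform estimate (\ref{meanexplosion}) from the proof of Lemma \ref{explosionlemma} (pushed to $b\to\infty$ by monotone convergence), whose right-hand side tends to $0$ as $z\to\infty$ by dominated convergence, since the hypothesis $\mathcal{E}<\infty$ makes the inner integral integrable in $\theta$ on bounded intervals. Combined with $\mathbb{P}_z(\sigma_0<\infty)\to 0$ as $z\to\infty$, immediate from the explicit formula of Lemma \ref{longtermminimal}, this yields $(\star)$.

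The technical heart is the $\lambda=0$ regime of $(\star)$: the integral $\int_0^\infty \ddr s/R_s$ then genuinely spans the whole half-line, so one must simultaneously control the early decay and the transient tail of $R$ uniformly in $z$, the latter by using the Markov property of $R$ at $T_0$ to iterate the bound on high starting values. The $\lambda>0$ case is considerably simpler, because the integral is effectively over the short interval $[0,\mathbbm{e}_\lambda]$ on which the deterministic leading term $e^{-cs/2}z$ already forces $R_s$ to be large for $z$ large.
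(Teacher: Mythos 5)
Your overall architecture is the same as the paper's: show quantitatively that, started from a large finite $z$, the process reaches $\infty$ within time $\epsilon$ with probability tending to $1$ as $z\to\infty$, then bootstrap to $\mathbb{P}_\infty$ via the Markov property at a small time $s$ together with the fact that $Z_s$ is close to $\infty$ under $\mathbb{P}_\infty$ (the paper uses almost-sure right-continuity where you use weak convergence of $P_s(\infty,\cdot)$ to $\delta_\infty$; both are fine, and your Blumenthal step is harmless but redundant). The early-time bound $R_s\geq (z-K)e^{-cs/2}$ on $[0,T_0]$ is a correct and clean variant of what the paper does, and the reduction of $\mathbb{P}_z(\sigma_0<\infty)\to 0$ to Lemma \ref{longtermminimal} (non-subordinator case; in the subordinator case it is identically $0$ by Lemma \ref{irreducibleclass}) is also fine.

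The genuine gap is in the tail estimate $\int_{T_0}^{\infty}\ddr s/R_s$. Letting $b\to\infty$ in (\ref{meanexplosion}) by monotone convergence does not work: the right-hand side is $\frac{2}{c}\int_0^b\ddr\theta\int_0^\theta\frac{1}{x}\exp(-xz+\int_x^\theta\frac{2\Psi(v)}{cv}\ddr v)\ddr x$, and as soon as $-\Psi$ is not the Laplace exponent of a subordinator the inner exponent grows with $\theta$, so the $\ddr\theta$-integral diverges as $b\to\infty$; the bound becomes $+\infty$ and Markov's inequality gives nothing. With $b$ kept finite, (\ref{meanexplosion}) only controls $\mathbb{E}_z[\int_0^\infty\frac{1-e^{-bR_s}}{R_s}\ddr s,\sigma_0=\infty]$, and to convert $\frac{1-e^{-bR_s}}{R_s}$ into $\frac{1}{R_s}$ you need a lower bound $R_s\geq a>0$ on the range of integration. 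Your deterministic cutoff $T_0$ provides no such bound on $[T_0,\infty)$; the paper's device is precisely the \emph{random last-exit time} $\xi_a:=\sup\{s>0: R_s<a\}$, which is finite on $\{\sigma_0=\infty\}$ by transience ($\mathcal{E}<\infty$) and after which $R_s\geq a$, so that $\int_{\xi_a}^{\infty}\frac{\ddr s}{R_s}\leq\frac{1}{1-e^{-ba}}\int_0^\infty\frac{1-e^{-bR_s}}{R_s}\ddr s$, the right-hand side of (\ref{meanexplosion}) then tending to $0$ as $z\to\infty$ by dominated convergence for fixed $b$. The alternative you sketch — iterating the short-time bound via the Markov property of $R$ at $T_0$ "on high starting values" — is not carried out and is not routine: one would have to control the event that $R_{T_0}$ is again large \emph{and} sum the successive contributions, which is essentially a different (and unproved) argument. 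As written, the proof of your key estimate $(\star)$ is therefore incomplete.
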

\begin{proof}
Assume $\mathcal{E}<\infty$. 
Under $\mathbb{P}_z$ with $z\in (0,\infty)$, $S_\infty$ and $\zeta_\infty$ have the same law. We first prove that $\mathbb{E}_{z}[S_\infty,S_\infty<\infty]\underset{z\rightarrow \infty}{\longrightarrow} 0.$ By the time-change studied in Section \ref{existence}, $\mathbb{E}_{z}[S_\infty,S_\infty<\infty]=\mathbb{E}_{z}\left[\int^{\infty}_0\frac{\ddr s}{R_s}, \sigma_0=\infty\right]$. Let $\epsilon>0$. 
Set $\xi_a:=\sup\{s>0: R_s<a\}$. Since $\mathcal{E}<\infty$ then $R_s\underset{s\rightarrow \infty}{\longrightarrow} \infty$ on the event $\{\sigma_0=\infty\}$ and thus $\xi_a<\infty$ a.s. Moreover, on the event $\{\sigma_0=\infty\}$, $\xi_a\underset{a\rightarrow 0}{\longrightarrow} 0$ and one can choose $a$ small enough such that $\int_{0}^{\xi_a}\frac{\ddr s}{R_s}\leq \epsilon/2$ a.s. 
From the inequality (\ref{meanexplosion}), we see by applying Lebesgue's theorem that for any $b>0$, 
$$\mathbb{E}_{z}\left[\int_{0}^{\infty}\frac{1-e^{-bR_s}}{R_s}\ddr z,\sigma_0=\infty\right]\underset{z\rightarrow \infty}{\longrightarrow} 0.$$
 For any $s\geq \xi_a$, $R_s\geq a$ and therefore $1-e^{-bR_s}\geq 1-e^{-ba}$. We deduce that 
\begin{align*}
\mathbb{E}_{z}\left[\int_{0}^{\infty}\frac{1-e^{-b R_s}}{R_s}\ddr z,\sigma_0=\infty\right]&\geq \mathbb{E}_{z}\left[\int_{\xi_a}^{\infty}\frac{1-e^{-b R_s}}{R_s}\ddr z,\sigma_0=\infty\right]\\
&\geq (1-e^{-ba})\mathbb{E}_z\left[\int_{\xi_a}^{\infty}\frac{\ddr s}{R_s},\sigma_0=\infty\right].
\end{align*}
Therefore, for $z$ large enough, $\mathbb{E}_z\left[\int_{\xi_a}^{\infty}\frac{\ddr s}{R_s},\sigma_0=\infty\right]\leq \epsilon/2$ and combining this with $\mathbb{E}_z\left[\int_{0}^{\xi_a}\frac{\ddr s}{R_s},\sigma_0=\infty\right]\leq \frac{\epsilon}{2}$,  we obtain
$$\mathbb{E}_z\left[\int_{0}^{\infty}\frac{\ddr s}{R_s},\sigma_0=\infty\right]\leq \epsilon.$$
By the Markov inequality,
$\mathbb{P}_z(S_\infty>t,S_\infty<\infty)\leq \frac{1}{t}\mathbb{E}_z[S_\infty,S_\infty<\infty]$ and thus \begin{equation}\label{limitz}\mathbb{P}_z(S_\infty>t,S_\infty<\infty)\underset{z\rightarrow \infty}{\longrightarrow} 0. \end{equation} Let $s>0$. By the Markov property at time $s$,
\begin{align*}
\mathbb{E}_{\infty}\left[\mathbbm{1}_{\{S_\infty>t+s\}}\mathbbm{1}_{\{S_\infty<\infty\}}\right]&=\mathbb{E}_\infty\left[\mathbb{E}[\mathbbm{1}_{\{S_\infty>t+s\}}\mathbbm{1}_{\{S_\infty<\infty\}}\lvert \mathcal{F}_s]\right]\\
&\leq \mathbb{E}_\infty\left[\mathbb{E}_{Z_s}[\mathbbm{1}_{\{S_\infty>t\}}\mathbbm{1}_{\{S_\infty<\infty\}}]\right]
\end{align*} 
By right continuity, $Z_s\underset{s\rightarrow 0}{\longrightarrow} \infty$ a.s under $\mathbb{P}_\infty$, and by (\ref{limitz}),
$\mathbb{E}_{Z_s}[\mathbbm{1}_{\{S_\infty>t\}}\mathbbm{1}_{\{S_\infty<\infty\}}]\underset{s\rightarrow 0}{\longrightarrow} 0$ a.s under $\mathbb{P}_\infty$. By Lebesgue's theorem, we conclude that, for any $t>0$,
$\mathbb{P}_\infty(S_\infty>t,S_\infty<\infty)=0.$ We now verify that $\mathbb{P}_\infty(S_\infty=\infty)=0$. By the time-change $\{S_\infty=\infty\}=\{\sigma_0<\infty\}$ under $\mathbb{P}_z$ for any $z\in [0,\infty)$. By the Markov property, $$\mathbb{P}_{\infty}(S_\infty=\infty)=\mathbb{E}_\infty(\mathbb{E}(\mathbbm{1}_{S_\infty=\infty}|\mathcal{F}_s))=\mathbb{E}_\infty(\mathbb{P}_{Z_s}(S_\infty=\infty)).$$
On the one hand, if $-\Psi$ is the Laplace exponent of a subordinator then $\mathbb{P}_{z}(S_\infty=\infty)=0$ for any $z<\infty$ and  since $Z_s<\infty$ $\mathbb{P}_{\infty}$-a.s, then $\mathbb{P}_{\infty}(S_\infty=\infty)=0$. On the other hand, if $-\Psi$ is not the Laplace exponent of a subordinator, we see in Lemma \ref{longtermminimal} that 
$\mathbb{P}_z(\sigma_0<\infty)\underset{z\rightarrow \infty}{\longrightarrow} 0$.
Since $Z_s\underset{s\rightarrow 0}{\longrightarrow} \infty$ $\mathbb{P}_{\infty}$-a.s then $\mathbb{P}_{\infty}(S_\infty=\infty)=0.$ This entails that $\mathbb{P}_\infty(S_\infty>t,S_\infty<\infty)=\mathbb{P}_\infty(S_\infty>t)=0$ for any $t>0$ and thus $\mathbb{P}_\infty(S_\infty=0)=1$.
\end{proof}
\noindent The last proposition leads to the natural question of characterizing the inverse local time. A related question is to see if one can continue Table \ref{correspondance} to the case where the boundary $\infty$ of $Z$ is regular absorbing. This requires us to work with the reflected diffusion and seems to require a deeper analysis of the duality at the level of generators. We intend to study these questions in future work. 
\section{Appendix}
\noindent We first prove Proposition \ref{example}. Recall 
$$\mathcal{E}=\int_{0}^{\theta}\frac{1}{x}\exp \left({\frac{2}{c}\int_x^\theta \frac{\imf{\Psi}{u}}{u}\, \ddr u}\right)\ddr x \text{ and } \mathcal{E}'=\int_{0}^\theta \frac{1}{x}\exp\left(-\frac{2}{c}\int_{1}^{\infty}e^{-xv}\frac{\bar{\pi}(v)}{v}\ddr v\right)\ddr x.$$
\begin{proof}[Proof of Proposition \ref{example}] 
From the L\'evy-Khintchine form (\ref{Levykhintchine}) of $\Psi$, one has for any $u>0$, 
\begin{equation}\label{psi(u)/u}
\frac{\Psi(u)}{u}=-\int_{0}^{\infty}e^{-uv}\bar{\pi}(v\vee 1)\ddr v+\gamma+\frac{\sigma^2}{2}u+ \int_{0}^1\pi((v,1))(1-e^{-uv})\ddr v
\end{equation}
where $v\vee 1=\max(v,1)$. For any $z>0$, set $H(z):=-\int_{z}^{\theta}\frac{\Psi(u)}{u}\ddr u$. One can readily check from the above expression that as $z$ goes to $0$, $$H(z)
=\int_{1}^{\infty}e^{-zv}\frac{\bar{\pi}(v)}{v}\ddr v+o_{\theta}(1)+c_\theta.$$  The last term $o_{\theta}(1)+c_\theta$ will not play any role in the convergence of $\mathcal{E}$. Therefore, $\mathcal{E}$ and $\mathcal{E}'$ have the same nature. Without loss of generality, we assume $\pi((0,1))=0$, $\sigma=0$ and $\gamma=0$. Define $\Phi$ such that $\Phi(z)/z=H(z)$. The function $\Phi$ is the Laplace exponent of a driftless subordinator with L\'evy measure $\nu$ whose tail is $\bar{\nu}(v)= \frac{\bar{\pi}(v)}{v}$ for all $v\geq 1$. Recall from \cite[Chapter III, Proposition 1]{Ber96} that there exists a universal constant $\kappa>0$ such that $$\frac{1}{\kappa}I(1/z)\leq \Phi(z)/z\leq \kappa I(1/z), \forall z>0, \text{ where } I(z):=\int_1^z \bar{\nu}(r)\ddr r.$$
Thus, we have $$ \frac{1}{\kappa} \int_{1}^{1/z}\frac{\bar{\pi}(u)}{u}\ddr u \leq 
H(z)\leq \kappa \int_{1}^{1/z}\frac{\bar{\pi}(u)}{u}\ddr u$$
Therefore 
$$\int_0^\theta\frac{1}{x}\exp\left(-\frac{2\kappa}{c}\int_{1}^{1/x}\frac{\bar{\pi}(u)}{u}\ddr u\right)\ddr x\leq \mathcal{E}\leq \int_0^\theta\frac{1}{x}\exp\left(-\frac{2}{c\kappa}\int_{1}^{1/x}\frac{\bar{\pi}(u)}{u}\ddr u\right)\ddr x.$$ 
\end{proof}
We now study the boundaries of $(U_t, t< \tau)$ solution to (\ref{generalizedFellerdiffusionequation}). This is a regular\footnote{in the sense irreducible} diffusion in $(0,\infty)$. We apply Feller's tests for which we refer to   Durrett's book \cite[Chapter 6, Section 6.2 and 6.5]{MR1398879}.
Denote by $s$ the scale function of $(U_t,t< \tau)$ and by $m$ the density of its speed measure. Fix $\theta \in (0,\infty)$. For any $x\in (0,\infty)$, $$s(x)=\int_{\theta}^{x}\exp\left(\int_{\theta}^{y}\frac{2\Psi(u)}{c u}\ddr u\right)\ddr y,\quad m(x)=\frac{1}{cx}\exp\left(-\int_{\theta}^{x}\frac{2\Psi(u)}{cu}\ddr u\right).$$
Denote by $M$ the antiderivative of $m$, $M(z)=\int_{\theta}^{z}m(x)\ddr x$. We establish Lemma \ref{entranceU} and Lemma \ref{stationarylawlemma}. 
\begin{lemma}[Lemma \ref{entranceU}-2]\label{entranceX} The boundary $\infty$ is inaccessible. It is an entrance boundary if $\int^{\infty}\frac{\ddr x}{\Psi(x)}<\infty$ and a natural one if $\int^{\infty}\frac{\ddr x}{\Psi(x)}=\infty$.
\end{lemma}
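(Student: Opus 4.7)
The approach is to apply Feller's classical boundary classification (cf.\ \cite{MR1398879} Chapter~6) to the regular diffusion $(U_t,t<\tau)$, using the scale function $s$ and speed density $m$ displayed just before the statement; write $s'(y)=\exp\bigl(\int_\theta^y \frac{2\Psi(u)}{cu}\,\ddr u\bigr)$ for the scale density and set $M(y):=\int_\theta^y m(z)\,\ddr z$. The two Feller integrals controlling the boundary $\infty$ are
\begin{equation*}
\Sigma:=\int^\infty s'(y)M(y)\,\ddr y=\int^\infty m(z)(s(\infty)-s(z))\,\ddr z,\qquad N:=\int^\infty m(y)(s(y)-s(\theta))\,\ddr y.
\end{equation*}
Feller's criteria assert: $\infty$ is accessible iff $\Sigma<\infty$; given inaccessibility, $\infty$ is an entrance iff $N<\infty$ and natural iff $N=\infty$. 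The whole computation rests on the integration-by-parts asymptotics
\begin{equation*}
s(\infty)-s(y)\sim\frac{cy\,s'(y)}{2|\Psi(y)|},\qquad s(y)-s(\theta)\sim\frac{cy\,s'(y)}{2\Psi(y)}\qquad (y\to\infty),
\end{equation*}
derived from $\int e^{h(z)}\,\ddr z\sim e^{h(y)}/|h'(y)|$ with $h(z):=\int_\theta^z\frac{2\Psi(u)}{cu}\,\ddr u$; each asymptotic is valid in the regime where the relevant denominator keeps definite sign and does not vanish at infinity.

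For inaccessibility I show $\Sigma=\infty$ in every case. If $\Psi(u)>0$ for large $u$, then $s'(y)\to\infty$ and $s(\infty)=\infty$; fixing any $z_0$ with $M(z_0)>0$ gives $\Sigma\geq M(z_0)(s(\infty)-s(z_0))=\infty$. If $-\Psi$ is the Laplace exponent of a subordinator, then $\Psi\leq 0$ and the L\'evy--Khintchine form, together with the convexity of $\Psi$, forces $|\Psi(u)|\leq Cu$ at infinity; the first asymptotic above then yields $m(y)(s(\infty)-s(y))\sim 1/(2|\Psi(y)|)$, whose integral diverges by comparison with $\int^\infty \ddr y/(2Cy)=\infty$.

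For the entrance-versus-natural dichotomy I compute $N$ using the second asymptotic. Grey's condition $\int^\infty \ddr u/\Psi(u)<\infty$ forces $\Psi>0$ near $\infty$ with $\Psi(y)/y\to\infty$, validating the asymptotic; combined with $m(y)=1/(cy\,s'(y))$ it yields $m(y)(s(y)-s(\theta))\sim 1/(2\Psi(y))$, so $N<\infty$ and $\infty$ is an entrance. In the complementary cases --- $\Psi>0$ eventually with $\int^\infty \ddr u/\Psi(u)=\infty$, or $-\Psi$ a subordinator exponent --- the same (or a direct) estimate gives $m(y)(s(y)-s(\theta))\gtrsim 1/y$, so $N=\infty$ and $\infty$ is natural. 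The main technical obstacle is justifying the integration-by-parts asymptotics for a general L\'evy--Khintchine exponent $\Psi$, which need not be smooth or regularly varying; this will be handled by exploiting the convexity of $\Psi$ (hence monotonicity of $\Psi'$ and of $h'$) so that the remainder in the integration by parts stays of lower order at infinity.
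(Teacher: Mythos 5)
Your framework is exactly the paper's: Feller's two tests at $\infty$ with the same scale and speed densities, and your $\Sigma$ and $N$ are the paper's $I_\infty$ and $J_\infty$ (after Fubini). The inaccessibility half is essentially sound, but note that your two-sided equivalence $s(\infty)-s(y)\sim \frac{cy\,s'(y)}{2|\Psi(y)|}$ is false in general: for $\Psi\equiv-\lambda$ with $\alpha:=2\lambda/c>1$ one has $s'(y)=(y/\theta)^{-\alpha}$, and the ratio of the two sides converges to $\alpha/(\alpha-1)\neq 1$. Your caveat (``the denominator does not vanish'') does not exclude this case, because the quantity that must stay away from $0$ for the Laplace asymptotic is $h'(y)=2\Psi(y)/(cy)$, not $\Psi(y)$. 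What saves the argument is that only the one-sided inequality $s(\infty)-s(y)\geq \frac{cy\,s'(y)}{2|\Psi(y)|}$ is needed, and that inequality does follow from the monotonicity of $|\Psi(u)|/u$ (convexity); you should state it as an inequality.

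The genuine gap is in the entrance direction. To get $N<\infty$ under Grey's condition you need the \emph{upper} bound $s(y)-s(\theta)\leq C\,\frac{cy\,s'(y)}{2\Psi(y)}$, and your proposed justification (monotonicity of $h'$ makes the integration-by-parts remainder lower order) does not deliver it. One integration by parts gives
\begin{equation*}
\int_\theta^y e^{h(x)}\,\ddr x=\frac{e^{h(y)}}{h'(y)}-\frac{e^{h(\theta)}}{h'(\theta)}+\int_\theta^y e^{h(x)}\frac{h''(x)}{h'(x)^2}\,\ddr x,
\end{equation*}
and bounding the remainder by $e^{h(y)}\int_\theta^y\bigl(-(1/h')'\bigr)\leq e^{h(y)}/h'(\theta)$ only yields $s(y)-s(\theta)=O(e^{h(y)})$; multiplied by $m(y)=1/(cy\,s'(y))$ this gives the non-integrable bound $O(1/y)$ rather than $O(1/\Psi(y))$. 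To close the argument you must show $h''/h'^2\to 0$, i.e.\ $\bigl(y\Psi'(y)-\Psi(y)\bigr)/\Psi(y)^2\to 0$, and convexity alone does not give this: you also need the fact that every branching mechanism satisfies $\Psi(z)=O(z^2)$, whence $y\Psi'(y)\leq\Psi(2y)=O(y^2)=o\bigl(\Psi(y)^2\bigr)$ because Grey's condition forces $\Psi(y)/y\to\infty$. The paper avoids this entirely by writing $N=\int_\theta^\infty s'(z)\int_z^\infty m(x)\,\ddr x\,\ddr z$ and applying the second mean value theorem to the \emph{decaying} inner integral $\int_z^\infty\frac{\Psi(x)}{cx}e^{-h(x)}\frac{\ddr x}{\Psi(x)}$ (legitimate since $1/\Psi$ is monotone there), which gives $\int_z^\infty m\leq e^{-h(z)}/(2\Psi(z))$ and hence $N\leq\int_\theta^\infty\frac{\ddr z}{2\Psi(z)}$ with no asymptotic analysis; I recommend that route. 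Finally, in the non-Grey, eventually-positive case your lower bound should be $m(y)(s(y)-s(\theta))\geq C/\Psi(y)$ rather than $C/y$ (take $\Psi(y)=y\log y$ to see that $1/y$ is not a lower bound), which still diverges by hypothesis.
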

\begin{proof}
Feller's tests imply that $\infty$ is an entrance boundary if and only if 
$$I_{\infty}=\int_{\theta}^{\infty}(s(\infty)-s(x))m(x)\ddr x=\int_{\theta}^{\infty}\frac{\ddr z}{z}\int_{z}^{\infty}\exp\left(\int_{z}^{x}\frac{2\Psi(u)}{cu}\ddr u\right)\ddr x=\infty$$ 
and $$J_\infty=\int_{\theta}^{\infty}(M(\infty)-M(z))s(z)\ddr z=\int_\theta^{\infty}\ddr z\int_{z}^{\infty}\frac{1}{x}\exp\left(-\int_{z}^{x}\frac{2\Psi(u)}{cu}\ddr u\right)\ddr x<\infty$$
If $-\Psi$ is not the Laplace exponent of a subordinator then there exists $z_0\geq 0$, such that for all $u\geq z_0$, $\Psi(u)\geq 0$ and then $I_\infty=\infty$. If $-\Psi$ is the Laplace exponent of a subordinator then $\underset{z\rightarrow \infty}{\lim} \Psi(z)/z=:\textbf{d}<0$. Therefore, for any $\epsilon>0$, there exists $z_0$ such that if $u\geq z_0$ then $\Psi(u)/u>\textbf{d}-\epsilon$. This entails: $I_\infty\geq \int_{\theta}^{\infty}\frac{e^{\frac{\textbf{d}-\epsilon}{c}z}}{z}e^{-\frac{\textbf{d}-\epsilon}{c}z}\ddr z=\infty$. We show now that $J_\infty<\infty$ if and only if $\int^{\infty}\frac{\ddr u}{\Psi(u)}<\infty$. Define $Q(x)=\int_{\theta}^{x}\frac{2\Psi(u)}{cu}\ddr u$, one has 
\begin{equation}\label{J} J_\infty=\int_{\theta}^{\infty}e^{Q(z)}\int_{z}^{\infty}\frac{e^{-Q(x)}}{cx}\ddr x\ddr z.
\end{equation}
Since any mechanism $\Psi$ is convex, $z\mapsto \Psi(z)/z$ is non-decreasing. Assume $\int^{\infty}\frac{\ddr z}{\Psi(z)}<\infty$. Note that this ensures that $-\Psi$ is not the Laplace exponent of a subordinator and thus $\Psi$ is non-decreasing over $(\rho, \infty)$ where $\rho$ is the largest root of $\Psi$. Moreover, since $\int^{\infty}\frac{\ddr z}{\Psi(z)}<\infty$ then there exists $v$ such that $\frac{\Psi(v)}{v}\geq 1$. Since the map $z\mapsto \Psi(z)/z$ is non-decreasing, therefore  for all $z\geq v$, $\frac{\Psi(z)}{z}\geq 1$ and $Q(\infty)=\infty$. For all $z\geq \theta>\rho$, by the mean value theorem there exists $\xi_{z}\geq z$ such that
$$\int_{z}^{\infty}\frac{\Psi(x)}{cx}e^{-Q(x)}\frac{\ddr x}{\Psi(x)}=\frac{1}{\Psi(\xi_z)}\left[-e^{-Q(x)}\right]_{x=z}^{x=\infty}=\frac{1}{\Psi(\xi_z)}e^{-Q(z)}.$$
Thus
$$J_\infty=\int_{\theta}^{\infty}e^{Q(z)}\int_{z}^{\infty}\frac{1}{cx}e^{-Q(x)}\ddr x\ddr z\leq \int_\theta^{\infty}\frac{\ddr z}{\Psi(z)}<\infty.$$
Conversely, assume $J_\infty<\infty$. Let $\varphi(z)=M(\infty)-M(z)=\int_{z}^{\infty}\frac{1}{cx}e^{-Q(x)}\ddr x$, one has $J_\infty=\underset{b\rightarrow \infty}{\lim} \int_{\theta}^{b}\varphi(z)e^{Q(z)}\ddr z.$ By integration by parts:
\begin{align*}
\int_{\theta}^{b}\varphi(z)e^{Q(z)}\ddr z&=\int_{\theta}^{b}\frac{\varphi(z)}{Q'(z)}e^{Q(z)}Q'(z)\ddr z=\left[\frac{\varphi(z)}{Q'(z)}e^{Q(z)}\right]_{\theta}^{b}-\int_{\theta}^{b}e^{Q(z)}\left(\frac{\varphi(z)}{Q'(z)}\right)'\ddr z.
\end{align*}
Moreover,
\begin{align*}\left(\frac{\varphi(z)}{Q'(z)}\right)'=\frac{\varphi'(z)Q'(z)-\varphi(z)Q''(x)}{Q'(z)^2}&=\frac{-\frac{1}{cz}e^{-Q(z)}\frac{\Psi(z)}{cz}-\varphi(z)\left(\frac{\Psi(z)}{z}\right)'}{\left(\frac{\Psi(z)}{cz}\right)^{2}}\\
&=-\frac{e^{-Q(z)}}{\Psi(z)}-\frac{\varphi(z)\left(\frac{\Psi(z)}{z}\right)'}{\left(\frac{\Psi(z)}{cz}\right)^{2}}\leq -\frac{e^{-Q(z)}}{\Psi(z)}.\end{align*}
The last inequality holds since $\left(\frac{\Psi(z)}{z}\right)'\geq 0$ and entails
\begin{align}\label{ineq}
\int_{\theta}^{b}\varphi(z)e^{Q(z)}\ddr z\geq \left[\frac{\varphi(z)}{Q'(z)}e^{Q(z)}\right]_{\theta}^{b}+\int_{\theta}^{b}\frac{\ddr z}{\Psi(z)}
\end{align}
Since $J_\infty<\infty$, then $\varphi(z)e^{Q(z)}\underset{z\rightarrow \infty}{\longrightarrow} 0$.  Provided that $\underset{z\rightarrow \infty}{\lim}Q'(z)\neq 0$, then (\ref{ineq}) gives 
\begin{align*}
\int_{\theta}^{\infty}\frac{\ddr z}{\Psi(z)}\leq 
J_\infty+\frac{\varphi(\theta)}{Q'(\theta)}e^{Q(\theta)}<\infty
\end{align*}
and the proof is complete. We show now that if $J_\infty<\infty$ then $\underset{z\rightarrow \infty}{\lim}Q'(z)\neq 0$. Assume by contradiction that for any $\epsilon>0$, there exists $z_0$ such that for all $z\geq z_0$, $Q'(z)=\frac{\Psi(z)}{cz}\leq \epsilon$. Recall $J_\infty=\int_\theta^{\infty}\ddr z\int_{z}^{\infty}\frac{1}{x}\exp\left(-\int_{z}^{x}\frac{\Psi(u)}{cu}\ddr u\right)\ddr x$, we get \begin{align*}
J_\infty&\geq \int_{z_0}^{\infty}\ddr z\int_{z}^{\infty}\frac{1}{x}e^{-\int_{z}^{x}\epsilon \ddr u}\ddr x=\int_{z_0}^{\infty}e^{\epsilon z}\ddr z\int_{z}^{\infty}\frac{1}{x}e^{-\epsilon x}\ddr x=\int_{z_0}^{\infty}\ddr x\left(\int_{z_0}^{x}\ddr ze^{\epsilon z}\right)\frac{e^{-\epsilon x}}{x}\\
&=\int_{z_0}^{\infty}\frac{e^{\epsilon x}-e^{\epsilon z_0}}{\epsilon}\frac{e^{-\epsilon x}}{x}\ddr x=\int_{z_0}^{\infty}\frac{1-e^{\epsilon(z_0-x)}}{\epsilon x}\ddr x=\infty.
\end{align*} 
\end{proof}
\begin{lemma}[Lemma \ref{entranceU}-1]\label{absorbing0} The boundary $0$ is an exit if
$\mathcal{E}=\int_{0}^\theta\frac{1}{x}\exp\left(\frac{2}{c}\int_{x}^{\theta}\frac{\Psi(u)}{u}\ddr u\right)\ddr x=\infty$, regular if $\mathcal{E}<\infty$ and $0\leq \frac{2\lambda}{c}<1$, and  an entrance if  $\frac{2\lambda}{c}\geq 1$.
\end{lemma}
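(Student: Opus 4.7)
My approach is to apply Feller's boundary tests at the left endpoint $0$, in exact parallel with the proof of Lemma~\ref{entranceX}. Define
\begin{equation*}
I_{0}=\int_{0}^{\theta}\bigl(s(x)-s(0)\bigr)\,m(x)\,\ddr x,\qquad J_{0}=\int_{0}^{\theta}\bigl(M(x)-M(0)\bigr)\,s'(x)\,\ddr x,
\end{equation*}
so that $0$ is exit iff $I_{0}<\infty$ and $J_{0}=\infty$; regular iff both are finite; entrance iff $I_{0}=\infty$ and $J_{0}<\infty$.

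Two observations tie these to the quantities in the statement. First, the identity $s'(x)m(x)=\tfrac{1}{cx}$ yields
\begin{equation*}
\mathcal{E}=c\int_{0}^{\theta}m(x)\,\ddr x=c\bigl(M(\theta)-M(0)\bigr),
\end{equation*}
so $\mathcal{E}<\infty\Leftrightarrow M(0)>-\infty$; in particular $\mathcal{E}=\infty$ forces $J_{0}=\infty$. Second, writing $\Psi(u)=-\lambda+\psi_{0}(u)$ with $\psi_{0}(u)\to 0$ as $u\to 0$, the factorisation
\begin{equation*}
s'(y)=(\theta/y)^{2\lambda/c}\exp\Bigl(-\tfrac{2}{c}\int_{y}^{\theta}\psi_{0}(u)/u\,\ddr u\Bigr)
\end{equation*}
shows, once the correction factor is controlled, that $s(0)>-\infty\Leftrightarrow 2\lambda/c<1$; in particular $2\lambda/c\geq 1$ forces $I_{0}=\infty$. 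Moreover $\lambda>0$ alone already implies $\mathcal{E}<\infty$, because the leading factor $(x/\theta)^{2\lambda/c}$ in the integrand of $\mathcal{E}$ dominates $1/x$ at $0$.

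The three asserted regimes then follow by verifying the remaining integral in each case:
\begin{itemize}
\item[(i)] If $\mathcal{E}=\infty$ then necessarily $\lambda=0$, so $s(0)>-\infty$. By Fubini, $I_{0}=\int_{0}^{\theta}s'(y)(M(\theta)-M(y))\,\ddr y$; since $s'$ is integrable and $M(\theta)-M(y)$ grows only logarithmically in $1/y$ (using the near-zero form of $m(u)\asymp 1/(cu)$ when $\lambda=0$), a direct estimate gives $I_{0}<\infty$. Hence exit.
\item[(ii)] If $\mathcal{E}<\infty$ and $2\lambda/c<1$, both $s(0)$ and $M(0)$ are finite; the integrands of $I_{0}$ and $J_{0}$ are bounded near $0$ via the matched asymptotics $(s(x)-s(0))\asymp x^{1-2\lambda/c}$ and $m(x)\asymp x^{2\lambda/c-1}$, so both integrals are finite. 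Hence regular.
\item[(iii)] If $2\lambda/c\geq 1$, rewrite $J_{0}=\int_{0}^{\theta}m(y)(s(\theta)-s(y))\,\ddr y$. The matched asymptotics give $m(y)\lvert s(y)\rvert=O(1)$ when $2\lambda/c>1$ and $m(y)\lvert s(y)\rvert=O(\log(1/y))$ when $2\lambda/c=1$, both of which are integrable on $(0,\theta)$; hence $J_{0}<\infty$ and $0$ is entrance.
\end{itemize}

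The main obstacle will be making the asymptotic $s'(y)\asymp y^{-2\lambda/c}$ rigorous for a general branching mechanism: the correction $\exp(-\tfrac{2}{c}\int_{y}^{\theta}\psi_{0}(u)/u\,\ddr u)$ need not remain bounded when $\int^{\infty}\log(u)\,\pi(\ddr u)=\infty$, and must be controlled via the Lévy--Khintchine representation of $\psi_{0}$ (in the spirit of the estimates underlying Proposition~\ref{example}). The precise statement required is that this correction is at most of sub-polynomial order in $y$, so that the integrability thresholds are governed solely by the exponents $1-2\lambda/c$ and $2\lambda/c-1$ controlling $s(x)-s(0)$ and $m(x)$ respectively.
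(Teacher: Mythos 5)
Your overall strategy is exactly the paper's: Feller's test at $0$ with the same scale $s$ and speed $m$, the same quantities $I_0$ and $J_0$, and the same identification $\mathcal{E}=-cM(0)$ (so $\mathcal{E}=\infty\Rightarrow J_0=\infty$, and $J_0$ has the nature of $\mathcal{E}$ once $|s(0)|<\infty$). Your case division and the Fubini rewritings of $I_0$ and $J_0$ are also those of the paper. Two remarks on the technical core, the second of which is a genuine gap.

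First, you overestimate the difficulty of the upper bounds. The control you need for $I_0<\infty$, $J_0<\infty$ and for ``$\lambda>0\Rightarrow\mathcal{E}<\infty$'' is only a one-sided, sub-polynomial bound on $\exp\bigl(-\frac{2}{c}\int_y^x\frac{\Psi(u)}{u}\,\ddr u\bigr)$, and this follows immediately from the continuity of $\Psi$ at $0+$: since $\Psi(u)\to-\lambda$, for any $\epsilon>0$ one has $-\frac{2}{c}\Psi(u)\leq\frac{2\lambda}{c}+\epsilon$ on $(0,\theta]$ for $\theta$ small, whence $\exp\bigl(-\int_y^x\frac{2\Psi(u)}{cu}\,\ddr u\bigr)\leq(x/y)^{2\lambda/c+\epsilon}$ and the double integrals are computed explicitly (this is what the proof of Lemma \ref{absorbing0} does). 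No L\'evy--Khintchine tail estimate in the spirit of Proposition \ref{example} is needed; the condition $\int^\infty\log(u)\,\pi(\ddr u)=\infty$ affects the integrability of $\Psi(u)/u$ near $0$, not the boundedness of $\Psi$ there.

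Second, and this is the gap: the statement you defer to the end --- that the correction $\exp\bigl(-\frac{2}{c}\int_y^\theta\tilde\Psi(u)/u\,\ddr u\bigr)$ with $\tilde\Psi=\Psi+\lambda$ is ``at most of sub-polynomial order, so that the integrability thresholds are governed solely by the exponents'' --- is not sufficient at the borderline $\frac{2\lambda}{c}=1$. There you must prove $|s(0)|=\int_0^\theta\frac{\theta}{y}\exp\bigl(-\frac{2}{c}\int_y^\theta\frac{\tilde\Psi(u)}{u}\,\ddr u\bigr)\ddr y=\infty$, i.e.\ $I_0=\infty$; a correction that is sub-polynomial but unbounded below (say behaving like $\log^{-2}(1/y)$) would make $s'$ integrable and render $0$ accessible, contradicting the claimed entrance/inaccessibility. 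What is actually needed is a \emph{lower} bound: $\int_y^\theta\frac{\tilde\Psi(u)}{u}\,\ddr u$ is bounded above uniformly in $y$. This does not follow from sub-polynomial control; the paper obtains it from convexity of $\tilde\Psi$ (so $u\mapsto\tilde\Psi(u)/u$ is monotone and $\tilde\Psi$ has at most one sign change): either $\tilde\Psi\geq0$ on $(0,\infty)$, in which case $\tilde\Psi(u)/u\to\tilde\Psi'(0+)\in[0,\infty)$ and the integral converges, or $\tilde\Psi\leq0$ on a neighbourhood of $0$, in which case the integral is nonpositive there. The same dichotomy is what makes the estimate $J_0<\infty$ clean for $\frac{2\lambda}{c}\geq1$. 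Supplying this convexity argument closes the gap; without it, the case $\frac{2\lambda}{c}=1$ of both ``$I_0=\infty$'' and your claimed equivalence $s(0)>-\infty\Leftrightarrow\frac{2\lambda}{c}<1$ is unproved.
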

\begin{proof}
Let $$I_0:=\int_0^{\theta}(s(x)-s(0))m(x)\ddr x=\frac{1}{c}\int_{0}^{\theta}\frac{\ddr x}{x}\int_{0}^{x}\exp\left(-\int_{y}^{x}\frac{2\Psi(z)}{cz}\ddr z\right)\ddr y.$$ We first show that $I_0<\infty$ if and only if $0\leq \frac{2\lambda}{c}<1$. This will provide that $0$ is accessible when $0\leq \frac{2\lambda}{c}<1$. Since $\Psi(0)=-\lambda$ and $\Psi$ is continuous at $0+$ then $-\frac{2}{c}\Psi(z)\underset{z\rightarrow 0+}{\longrightarrow} \frac{2\lambda}{c}.$ Assume $\frac{2\lambda}{c}<1$, and let $\epsilon$ such that $\frac{2\lambda}{c}+\epsilon<1$, there exists $\theta>0$ such that $-\frac{2}{c}\Psi(z)\leq \frac{2\lambda}{c}+\epsilon<1$ for all $z\leq \theta$. Therefore, $$|s(0)|=\int_{0}^{\theta}\exp\left(-\int_{y}^{\theta}\frac{2\Psi(z)}{cz}\ddr z\right)\ddr y\leq \int_{0}^{\theta}\exp\left(\int_{y}^{\theta}\frac{\frac{2\lambda}{c}+\epsilon}{z}\ddr z\right)\ddr y=c_\theta\int_{0}^{\theta}\frac{\ddr y}{y^{\frac{2\lambda}{c}+\epsilon}}<\infty.$$ 
We have  \begin{align*}
I_0\leq \frac{1}{c}\int_{0}^{\theta}\frac{\ddr x}{x}\int_{0}^{x}\exp\left(\int_{y}^{x}\frac{2\lambda/c+\epsilon}{z}\ddr z\right)\ddr y&=\frac{1}{c}\int_{0}^{\theta}\frac{\ddr x}{x}\int_{0}^{x}\exp\left[\left(2\lambda/c+\epsilon\right)(\ln(x)-\ln(y))\right]\ddr y\\
&=\frac{1}{c}\int_{0}^{\theta}x^{\frac{2\lambda}{c}+\epsilon-1}\ddr x\int_{0}^{x}\frac{1}{y^{\frac{2\lambda}{c}+\epsilon}}\ddr y\\
&=\frac{\theta}{c\left(1-\left(2\lambda/c+\epsilon\right)\right)}<\infty.
\end{align*}
Therefore when $0\leq \frac{2\lambda}{c}<1$ for all $y\in \mathbb{R}_+$ s.t. $y\neq x$, $\mathbb{P}_x(\tau_0<\tau_y)>0$. Assume $\frac{2\lambda}{c}\geq 1$ and define $\tilde{\Psi}(z)=\lambda+\Psi(z)$. Note that $\tilde{\Psi}$ is the mechanism of a CSBP with no killing term. One has
$$|s(0)|=\int_{0}^{\theta}\frac{1}{y^{2\lambda/c}}\exp\left(-\int_{y}^{\theta}\frac{2\tilde{\Psi}(z)}{cz}\ddr z\right)\ddr y.$$ 
If $\tilde{\Psi}(z)\geq 0$ for all $z\geq 0$, then $\underset{z\rightarrow 0}{\lim}\frac{\tilde{\Psi}(z)}{z}=\tilde{\Psi}'(0)\geq 0$ and $\int_{y}^{\theta}\frac{2\tilde{\Psi}(z)}{cz}\ddr z\underset{y\rightarrow 0}{\longrightarrow} \int_{0}^{\theta}\frac{2\tilde{\Psi}(z)}{cz}\ddr z<\infty$ thus $|s(0)|=\infty$. If $\tilde{\Psi}$ is non-positive in a neighbourhood of $0$ then $|s(0)|\geq \int_{0}^{\theta}\frac{\ddr y}{y^{2\lambda/c}}=\infty$, which implies that $0$ is inaccessible.  It remains to study $J_0$ defined by 
$$J_0:=\int_{0}(M(z)-M(0))s'(z)\ddr z$$
with $M(z)=\int^{z}m(x)\ddr x$, to see if the process can get out from $0$. When $0\leq \frac{2\lambda}{c}<1$, $|s(0)|<\infty$ and $J_0$ has the same nature as $|M(0)|=\mathcal{E}$. We deduce that $0$ is an exit if $\mathcal{E}=\infty$, regular if $0\leq \frac{2\lambda}{c}<1$ and $\mathcal{E}<\infty$.
Assume $\frac{2\lambda}{c}\geq 1$. One has 
\begin{align}\label{J_0}J_0&=\int_{0}^{\theta}\ddr z\int_{0}^{z}\frac{1}{x}\exp\left(\int_{x}^{z}\frac{2\Psi(u)}{cu}\ddr u \right)\ddr x=\int_{0}^{\theta}\ddr z\int_{0}^{z}\frac{1}{x}e^{-\frac{2\lambda}{c}\int_x^{z}\frac{\ddr u}{u}}e^{\int_{x}^{z}\frac{2\tilde{\Psi}(u)}{cu}\ddr u }\ddr x \nonumber \\
&=\int_{0}^{\theta}\frac{\ddr z}{z^{2\lambda/c}}\int_{0}^{z}x^{2\lambda/c-1}e^{\int_{x}^{z}\frac{2\tilde{\Psi}(u)}{cu}\ddr u}\ddr x.
\end{align}
 If $\tilde{\Psi}$ is non-positive in a neighbourhood of $0$, then $$J_0\leq \int_{0}^{\theta}\frac{\ddr z}{z^{2\lambda/c}}\int_{0}^{z}x^{2\lambda/c-1}\ddr x=\frac{\theta}{2\lambda/c}<\infty.$$
If $\tilde{\Psi}(z)\geq 0$ for all $z\geq 0$, then $\underset{z\rightarrow 0}{\lim} \tilde{\Psi}(z)/z\geq 0$ and then $0\leq \int_{x}^{z}\frac{2\tilde{\Psi}(u)}{cu}\ddr u\leq \int_{0}^{\theta}\frac{2\tilde{\Psi}(u)}{cu}\ddr u<\infty$, which provides $J_0<\infty$.  We deduce that when $\frac{2\lambda}{c}\geq 1$, then $0$ is an entrance.
\end{proof}
Recall that $(U_t^0,t\geq 0)$ denotes the $\Psi$-generalized Feller diffusion with $0$ as an exit (if $\mathcal{E}=\infty$) or a regular absorbing boundary (if $\mathcal{E}<\infty$ and $\frac{2\lambda}{c}<1$).
\begin{lemma}[Lemma \ref{stationarylawlemma}-1]\label{degenerate}
Assume there exists $z\geq 0$, such that $\Psi(z)\geq 0$ (-$\Psi$ is not the Laplace exponent of a subordinator), then for all $x\geq 0$, $\mathbb{P}_x(\underset{t\rightarrow \infty}{\lim} U^0_{t}=0)=1$.
\end{lemma}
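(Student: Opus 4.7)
The plan is to establish almost-sure absorption at $0$ via a classical scale function / boundary-classification argument. The case $x=0$ is immediate since $0$ is absorbing for $(U^0_t, t \geq 0)$, so I fix $x \in (0,\infty)$ throughout. The diffusion is regular on $(0,\infty)$ with scale function
\[
s(x) = \int_\theta^x \exp\left(\int_\theta^y \frac{2\Psi(u)}{cu}\,\ddr u\right)\,\ddr y,
\]
and under the standing hypothesis $0 \leq 2\lambda/c < 1$ of Lemma \ref{stationarylawlemma}, the computation carried out in the proof of Lemma \ref{absorbing0} already shows $|s(0)| < \infty$, so $0$ is accessible.

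The key step is to show that $s(\infty) = \infty$. Since by assumption $-\Psi$ is not the Laplace exponent of a subordinator, convexity of $\Psi$ implies that its largest root $\rho$ is finite and $\Psi(u) \geq 0$ for all $u \geq \rho$. Taking $\theta > \rho$, the inner integral $\int_\theta^y \tfrac{2\Psi(u)}{cu}\,\ddr u$ is non-negative for every $y \geq \theta$, hence
\[
s(\infty) \geq \int_\theta^\infty \ddr y = \infty.
\]

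Setting $\tau_y := \inf\{t \geq 0 : U^0_t = y\}$, I would then invoke the standard one-dimensional hitting formula on natural scale: for $0 < a < x < b$,
\[
\mathbb{P}_x(\tau_a < \tau_b) = \frac{s(b) - s(x)}{s(b) - s(a)}.
\]
Letting $a \downarrow 0$ is legitimate because $|s(0)|<\infty$, and letting $b \uparrow \infty$ is legitimate by $s(\infty)=\infty$; this gives $\mathbb{P}_x(\tau_0 < \tau_b) \to 1$ as $b \to \infty$. Since $0$ is absorbing for $U^0$, this forces $U^0_t \to 0$ almost-surely, as soon as one knows $\mathbb{P}_x(\tau_0 < \infty) = 1$.

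The main (mild) technical point, which I regard as the only obstacle, is the passage from $\mathbb{P}_x(\tau_0 < \tau_b) \to 1$ to $\mathbb{P}_x(\tau_0 < \infty) = 1$. The events $\{\tau_0 < \tau_b\}$ are increasing in $b$, and their union coincides with $\{\tau_0 < \infty\}$ precisely because $\tau_b \to \infty$ almost-surely, which in turn follows from the inaccessibility of $\infty$ proved in Lemma \ref{entranceX}. Beyond this routine bookkeeping I do not anticipate any substantial difficulty.
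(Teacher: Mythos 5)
Your overall strategy is the same as the paper's: compute the scale function, show $s(\infty)=\infty$ together with $|s(0)|<\infty$, and conclude by one-dimensional diffusion theory. Your derivation of $s(\infty)=\infty$ is correct and in fact slightly more elementary than the paper's (the paper bounds below using $\liminf_{z\to\infty}\Psi(z)/z>0$, you use positivity of $\Psi$ beyond its largest root; both rest on the same convexity/non-subordinator input). The problem is in the step you dismiss as routine bookkeeping. You assert that $|s(0)|<\infty$ legitimates the identity $\mathbb{P}_x(\tau_0<\tau_b)=\frac{s(b)-s(x)}{s(b)-s(0)}$ and, downstream, that $\mathbb{P}_x(\tau_0<\infty)=1$. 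Neither follows from $|s(0)|<\infty$ alone. Monotone convergence gives $\lim_{a\downarrow 0}\mathbb{P}_x(\tau_a<\tau_b)=\mathbb{P}_x\bigl(\bigcap_{a>0}\{\tau_a<\tau_b\}\bigr)$, and the event $\bigcap_{a>0}\{\tau_a<\tau_b\}$ is in general strictly larger than $\{\tau_0<\tau_b\}$: it contains paths that approach $0$ before reaching $b$ without ever hitting $0$, and a priori even paths that oscillate below $b$. Geometric Brownian motion (natural scale $s(x)=x$, so $s(0)=0>-\infty$) is the standard counterexample to your claimed implication: there $\lim_{a\downarrow 0}\mathbb{P}_x(\tau_a<\tau_b)=(b-x)/b>0$ while $\mathbb{P}_x(\tau_0<\infty)=0$. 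Finiteness of $s(0)$ does not control accessibility of $0$; that is governed by the quantity $I_0$ of Lemma \ref{absorbing0}.

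The gap is repairable in two ways. The clean route, and the one the paper takes, is to quote the attracting-boundary theorem directly (\cite[Theorem 3.3 page 220]{MR1398879}): $s(\infty)=\infty$ together with $s(0)>-\infty$ already gives $\mathbb{P}_x(\lim_{t\to\zeta}U^0_t=0)=1$, where $\zeta$ denotes the exit time of $U^0$ from $(0,\infty)$; since $0$ is absorbing for $U^0$, this yields $\lim_{t\to\infty}U^0_t=0$ almost-surely whether or not $\zeta<\infty$, so the question of whether $\tau_0$ is finite can be bypassed entirely. Alternatively, if you insist on the stronger statement $\mathbb{P}_x(\tau_0<\infty)=1$, you must additionally invoke the accessibility of $0$, i.e. $I_0<\infty$, which does hold here because $0\leq 2\lambda/c<1$ (Lemma \ref{absorbing0}); but that input appears nowhere in your argument as written, and without it the final step fails.
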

\begin{proof}
Recall that if $s(\infty)=\infty$ then $\tau_\infty=\infty$ a.s. (see \cite[Theorem 3.3 page 220]{MR1398879}). By assumption  $-\Psi$ is not the Laplace exponent of a subordinator, we see from (\ref{psi(u)/u}) that this entails $\textbf{d}=\underset{z\rightarrow \infty}{\liminf}\frac{\Psi(z)}{z}>0$. One has $$s(\infty)=\int^{\infty}\exp\left(\int_{\theta}^{y}\frac{2\Psi(z)}{cz}\ddr z\right)\ddr y\geq \int^{\infty}\exp\left(\frac{2\textbf{d}}{c}y\right)\ddr y=\infty.$$
Thus, $\mathbb{P}_{x}(\tau_0<\tau_\infty)=1$ for all $x\geq 0$.
\end{proof}
If $\Psi(z)\leq 0$ for all $z\geq 0$, then $\Psi$ is of the form 
\begin{equation}\label{sub} \Psi(v)=-\lambda-\delta v-\int_{0}^{\infty}(1-e^{-vu})\pi(\ddr u)
\end{equation}
with $\delta\geq 0$ and $\int_{0}^{\infty}(1\wedge u)\pi(\ddr u)<\infty$. In this case,  $-\Psi$ is the Laplace exponent of a subordinator. Recall the condition $(\textbf{A})$:
$$\delta=0 \text{ and } \bar{\pi}(0)+\lambda\leq c/2. \qquad (\textbf{A})$$  
\begin{lemma}[Lemma \ref{stationarylawlemma}-2)]\label{nondegenerate}
Assume $\Psi$ of the form (\ref{sub}). 
\begin{itemize}
\item[i)] If $\textbf{(A)}$ is satisfied then for all $x\geq 0$, $\mathbb{P}_x(\underset{t\rightarrow \infty}{\lim} U^0_{t}=0)=1$.
\item[ii)] If $\textbf{(A)}$ is not satisfied then for all $x\geq 0$, 
$$\mathbb{P}_x(\underset{t\rightarrow \infty}{\lim} U^0_{t}=0)=
\frac{\int_{x}^{\infty}\exp\left(\int_{\theta}^{y}\frac{2\Psi(z)}{cz}\ddr z\right)\ddr y}{\int_{0}^{\infty}\exp\left(\int_{\theta}^{y}\frac{2\Psi(z)}{cz}\ddr z\right)\ddr y}.$$
\end{itemize}
\end{lemma}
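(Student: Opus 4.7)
The plan is to reduce both items to a classical scale-function dichotomy for regular one-dimensional diffusions and then compute $s(\infty)$ explicitly using the monotonicity of $-\Psi$.

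Under the standing assumption $0 \leq 2\lambda/c < 1$, the proof of Lemma~\ref{absorbing0} already yields $|s(0)| < \infty$. In the subordinator case $\Psi \leq 0$, inspection of the quantity $J_\infty$ in the proof of Lemma~\ref{entranceX} forces $J_\infty = \infty$, so $\infty$ is a natural boundary for $(U_t^0)$; in particular $U_t^0$ does not reach $\infty$ in finite time. Consequently, $(s(U_t^0))$ is a continuous local martingale taking values in $[s(0), s(\infty))$, and standard theory (see e.g.\ \cite[Chapter 6]{MR1398879}) implies that $U_t^0$ converges almost surely with limit in $\{0, \infty\}$, and
\[
\mathbb{P}_x(\lim_{t\to\infty}U_t^0 = 0) = \begin{cases}\dfrac{s(\infty)-s(x)}{s(\infty)-s(0)}, & s(\infty) < \infty,\\[4pt] 1, & s(\infty) = \infty.\end{cases}
\]
Combined with the identity $s(x) - s(0) = \int_0^x \exp\!\bigl(\int_\theta^y \tfrac{2\Psi(z)}{cz}\ddr z\bigr)\ddr y$ already used in Section~\ref{generalizedFellerdiffusion}, this reduces the proof of (i) and (ii) to establishing the equivalence $s(\infty) = \infty \iff \textbf{(A)}$.

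For this equivalence, the key observation is that $-\Psi(z) = \lambda + \delta z + \int_0^\infty (1-e^{-zu})\pi(\ddr u)$ is nonnegative and nondecreasing in $z$, with limiting value $-\Psi(\infty) = +\infty$ when $\delta > 0$ or $\bar\pi(0) = \infty$, and $-\Psi(\infty) = \lambda + \bar\pi(0)$ otherwise. Hence \textbf{(A)} is equivalent to $-\Psi(z) \leq c/2$ for every $z > 0$, while its negation is equivalent to the existence of some $z_0 > 0$ with $-\Psi(z_0) > c/2$. Under \textbf{(A)} the trivial bound $\Psi(z) \geq -c/2$ gives $\exp\!\bigl(\int_\theta^y \tfrac{2\Psi(z)}{cz}\ddr z\bigr) \geq \theta/y$, so $s(\infty) = \infty$. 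When \textbf{(A)} fails, monotonicity of $-\Psi$ yields $\Psi(z) \leq -|\Psi(z_0)|$ for every $z \geq z_0$, whence $\exp\!\bigl(\int_\theta^y \tfrac{2\Psi(z)}{cz}\ddr z\bigr) \leq C\, y^{-2|\Psi(z_0)|/c}$ with exponent strictly greater than $1$, so $s(\infty) < \infty$.

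The main obstacle is to correctly carry out the scale-function dichotomy in the present setting: when $\mathcal{E} < \infty$ the boundary $0$ is merely regular, so one must work with the absorbed version $U^0$, and the optional stopping of $s(U^0_{\cdot\wedge T_{\epsilon,M}})$ on the exit time $T_{\epsilon,M}$ from $(\epsilon,M)$ has to be passed to the limit $\epsilon \to 0$, $M \to \infty$ while keeping track of absorbed trajectories. Once this classical dichotomy is secured, the asymptotic evaluation of $s(\infty)$ follows cleanly from the monotonicity and positivity of the Laplace exponent of the subordinator.
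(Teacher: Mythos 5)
Your proof is correct, and while the reduction is the same as the paper's (both first invoke $|s(0)|<\infty$ from Lemma \ref{absorbing0}, the inaccessibility of $\infty$, and the classical scale-function dichotomy $\mathbb{P}_x(\lim_t U^0_t=0)=\frac{s(\infty)-s(x)}{s(\infty)-s(0)}$ when $s(\infty)<\infty$, $=1$ when $s(\infty)=\infty$, citing Durrett), your treatment of the key equivalence $s(\infty)=\infty \iff \textbf{(A)}$ is genuinely different and substantially simpler. The paper adapts Lambert's argument: it splits into the cases $\delta>0$, $\{\delta=0,\ \bar{\pi}(0)<\infty,\ \bar{\pi}(0)+\lambda>c/2\}$, $\{\delta=0,\ \bar{\pi}(0)=\infty\}$ and the case \textbf{(A)}, and in the middle cases it extracts the precise logarithmic asymptotics of $\int_\theta^y\frac{2\Psi(z)}{cz}\,\ddr z$ via Tonelli and a limit function $k_\theta(x)$, then tunes a truncation parameter $x$ so that $\lambda+\bar{\pi}(x)e^{-\theta x}>c/2$. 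You instead observe that $-\Psi$ is nondecreasing with $\sup_{z>0}(-\Psi(z))=\lambda+\bar{\pi}(0)$ when $\delta=0$ and $=+\infty$ otherwise, so that \textbf{(A)} is exactly the statement $-\Psi(z)\leq c/2$ for all $z>0$; the integrand of $s(\infty)$ is then bounded below by $\theta/y$ under \textbf{(A)} and above by $Cy^{-\beta}$ with $\beta=\tfrac{2}{c}(-\Psi(z_0))>1$ when \textbf{(A)} fails. This is airtight (the endpoint case $\lambda+\bar{\pi}(0)=c/2$ still gives divergence of $\int^\infty \ddr y/y$, and the failure of \textbf{(A)} always produces a strict inequality $-\Psi(z_0)>c/2$ at some finite $z_0$, hence a strictly integrable power), unifies all three sub-cases of the paper into one line, and avoids the $k_\theta$ computation entirely; what it gives up is only the sharper information that the integrand decays like $y^{-2(\lambda+\bar{\pi}(0))/c}$, which the lemma does not need.
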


\begin{proof}
We show i) and ii) simultaneously. By Lemma \ref{absorbing0}, $|s(0)|<\infty$ and if $s(\infty)<\infty$, then \cite[Theorem 3.3 page 220]{MR1398879} ensures $$\mathbb{P}_x (U^0_t\underset{t\rightarrow +\infty}{ \longrightarrow} 0)=\frac{s(\infty)-s(x)}{s(\infty)-s(0)}.$$
Moreover if $s(\infty)=\infty$ then $\mathbb{P}_x (U^0_t\underset{t\rightarrow +\infty}{ \longrightarrow} 0)=1$. We show that $s(\infty)=\infty$ if and only if $(A)$ is satisfied. One has $s(\infty)=\int_{\theta}^{\infty}\exp\left(\int_{\theta}^{y}\frac{2\Psi(z)}{cz}\ddr z\right)\ddr y$. Note that $\frac{\Psi(z)}{z}=-\frac{\lambda}{z}-\delta-\int_{0}^{\infty}e^{-zx}\bar{\pi}(x)\ddr x.$ We adapt the proof of Lambert \cite[Section 4.1]{MR2134113}. 
We first discuss the case of $\textbf{(A)}$ not satisfied. This corresponds to assume that at least one of the following condition is satisfied:
$$\delta>0 \textit{ or } \delta=0, \bar{\pi}(0)<+\infty \text{ and } \bar{\pi}(0)+\lambda>c/2 \textit{ or } \delta=0 \text{ and } \bar{\pi}(0)=+\infty .$$
\begin{itemize}
\item Assume $\delta>0$. Since $\frac{\Psi(z)}{z} \underset{z\rightarrow +\infty}{\longrightarrow} -\delta$, then there exists $y_0$ such that for all $z\geq y_0$, $\frac{2\Psi(z)}{cz}\leq -\frac{2\delta}{c}+\frac{\delta}{c}= -\frac{\delta}{c}<0$. Therefore
\begin{align*}
s(\infty)=\int_{\theta}^{\infty}\exp\left(\int_{\theta}^{y_0}\frac{2\Psi(z)}{cz}\ddr z+\int_{y_0}^{y}\frac{2\Psi(z)}{cz}\ddr z\right)&\leq c_{y_0}\int_{\theta}^{\infty}\exp\left(-\int_{y_0}^{y}\frac{\delta}{c}\ddr z\right)\ddr y\\
&=c_{y_{0}}e^{\frac{\delta}{c}y_0}\int_{\theta}^{\infty}e^{-\frac{\delta}{c}y}\ddr y<\infty \end{align*}
where $c_{y_0}=\exp\left(\int_{\theta}^{y_0}\frac{2\Psi(z)}{cz}\ddr z\right)$.
\item Assume $\delta=0$, $\bar{\pi}(0)<+\infty$ and $\bar{\pi}(0)+\lambda>c/2$. By Tonelli, one has 
\begin{align}\label{tonelli}-\int_{\theta}^{y}\frac{2\Psi(z)}{cz}\ddr z&=\frac{2\lambda}{c}\ln\left(\frac{y}{\theta}\right)+\frac{2}{c}\int_{\theta}^{y}\ddr z\left(\int_{0}^{\infty}e^{-zx}\bar{\pi}(x)\ddr x\right)\\
&=\frac{2\lambda}{c}\ln\left(\frac{y}{\theta}\right)+\frac{2}{c}\int_{0}^{\infty}\frac{e^{-\theta x}-e^{-yx}}{x}\bar{\pi}(x)\ddr x.\end{align}
We will show that 
\begin{equation}\label{ktheta}
k_{\theta}(x):=\underset{y\rightarrow \infty}{\lim} \left[\int_{0}^{\infty}\frac{e^{-\theta z}-e^{-yz}}{z}\bar{\pi}(z)\ddr z-\int_{x/y}^{x}e^{-\theta z}\frac{\bar{\pi}(z)}{z}\ddr z\right] \text{ exists and is finite.}
\end{equation}
Assume for now that (\ref{ktheta}) is proven. Since $\int_{x/y}^{x}e^{-\theta z}\frac{\bar{\pi}(z)}{cz}\ddr z\geq \frac{1}{c}\bar{\pi}(x)e^{-\theta x}\ln (y)$, then there exists $y_0$ such that if $y\geq y_0$, then
\begin{align*}-\frac{2}{c}\int_{\theta}^{y}\frac{\Psi(z)}{z}\ddr z&\geq  \frac{2}{c}\left(k_{\theta}(x)-1-\ln(\theta)+\lambda\ln(y)+\int_{x/y}^{x}e^{-\theta z}\frac{\bar{\pi}(z)}{z}\ddr z\right)\\
&\geq c_{\theta}(x)+\frac{2}{c}(\lambda+\bar{\pi}(x)e^{-\theta x})\ln y.
\end{align*}
Choose $x$ close enough to $0$ such that  $\lambda+\bar{\pi}(x)e^{-\theta x}>c/2$. Therefore
\begin{align*}
s(\infty)=\int_{\theta}^{\infty}\exp\left(\int_{\theta}^{y}\frac{2\Psi(z)}{cz}\ddr z\right)\ddr y&\leq K_{\theta,y_0}+ \int_{y_0}^{\infty}\exp\left(c_{\theta}(x)-\frac{2}{c}(\lambda+\bar{\pi}(x)e^{-\theta x})\ln y \right)\ddr y\\
&=K_{\theta,y_0}+K_{\theta,x}\int_{\theta}^{\infty}y^{-\frac{2}{c}(\lambda+\bar{\pi}(x)e^{-\theta x})}\ddr y<\infty
\end{align*}
where $K_{\theta,y_0}$ and $K_{\theta,x}$ are some constants.\\
\item Assume that $\bar{\pi}(0)=+\infty$ and define $\pi_{b}(\ddr x)$ by $\bar{\pi}_b(x)=\bar{\pi}(x\vee b)$. For all $b>0$ and all $x\geq 0$, $\bar{\pi}_{b}(x)\leq \bar{\pi}(x)$. The measure $\pi_b$ is finite and one can choose $b$ such that $\bar{\pi}_b(0)>c$. One has for $y$ large enough: $$\int_{0}^{\infty}\frac{e^{-\theta x}-e^{-yx}}{x}\bar{\pi}(x)\ddr x\geq \int_{0}^{\infty}\frac{e^{-\theta x}-e^{-yx}}{x}\bar{\pi}_b(x)\ddr x$$ 
Applying the same argument as above with $\lambda=0$ and $\pi_b$ instead of $\pi$, one obtains $s(\infty)<\infty$.\\
\end{itemize}
Assume now that condition \textbf{(A)} hold. Namely, $\delta=0$, $\bar{\pi}(0)<+\infty$ and $\bar{\pi}(0)+\lambda\leq c/2$. By using (\ref{ktheta}), there exists $y_0$ such that for $y\geq y_0$, $$-\int_{\theta}^{y}\frac{2\Psi(z)}{cz}\ddr z\leq k_{\theta}(x)+1+\frac{2}{c}\ln\left(\frac{y}{\theta}\right)+
\frac{2}{c}\int_{x/y}^{x}e^{-\theta z}\frac{\bar{\pi}(z)}{z}\ddr z\leq k_{\theta}(x)+1-\frac{2}{c}\ln(\theta)+\frac{2}{c}(\lambda+\bar{\pi}(0))\ln (y)$$
Since $\bar{\pi}(0)+\lambda\leq c/2$, then $$s(\infty)\geq K_{\theta,y_0}+K_{\theta,x}\int_{\theta}^{\infty}y^{-2(\bar{\pi}(0)+\lambda)/c}\ddr y=\infty.$$
We show now (\ref{ktheta}):
\begin{align*}
&\int_{0}^{\infty}\left(e^{-\theta z}-e^{-yz}\right)\frac{\bar{\pi}(z)}{z}\ddr z-\int_{x/y}^{x}e^{-\theta z}\frac{\bar{\pi}(z)}{z}\ddr z\\
&=\int_{0}^{x/y}\left(e^{-\theta z}-e^{-yz}\right)\frac{\bar{\pi}(z)}{z}\ddr z-\int_{x/y}^{x}e^{-yz}\frac{\bar{\pi}(z)}{z}\ddr z+\int_{x}^{\infty}\left(e^{-\theta z}-e^{-yz}\right)\frac{\bar{\pi}(z)}{z}\ddr z\\
&=\underbrace{\int_{0}^{x/y}\left(e^{-\theta z}-e^{-yz}\right)\frac{\bar{\pi}(z)}{z}\ddr z}_{I_1(x,y)}-\underbrace{\int_{x}^{xy}e^{-u}\frac{\bar{\pi}(u/y)}{u}\ddr u}_{I_2(x,y)}+\underbrace{\int_{x}^{\infty}\left(e^{-\theta z}-e^{-yz}\right)\frac{\bar{\pi}(z)}{z}\ddr z}_{I_3(x,y)}.
\end{align*}
By changing variable, one has $I_{1}(x,y)=\int_{0}^{x}\left(e^{-\theta \frac{u}{y}}-e^{-u}\right)\frac{\bar{\pi}(u/y)}{u}\ddr u$. Since $y\mapsto \bar{\pi}(u/y)$ is non-decreasing and converges to $\bar{\pi}(0)$ as $y$ goes to $\infty$, one can apply the monotone convergence theorem, this provides \[I_{1}(x,y)\underset{y\rightarrow +\infty}{\longrightarrow}\bar{\pi}(0)\int_{0}^{x}(1-e^{-u})\frac{\ddr u}{u}<\infty.\] The monotone convergence theorem readily applies to $I_{2}(x,y)$: 
\[I_{2}(x,y)\underset{y\rightarrow +\infty}{\longrightarrow} \bar{\pi}(0)\int_{x}^{\infty}\frac{e^{-u}}{u}\ddr u<\infty.\]
By Lebesgue's theorem, $I_{3}(x,y)=\int_{x}^{\infty}\left(e^{-\theta z}-e^{-yz}\right)\frac{\bar{\pi}(z)}{z}\ddr z\underset{y\rightarrow \infty}{\longrightarrow} \int_{x}^{\infty}\frac{e^{-\theta z}\bar{\pi}(z)}{z}\ddr z<\infty$. Finally,
$$k_{\theta}(x)=\bar{\pi}(0)\left(\int_{0}^{x}\frac{1-e^{-u}}{u}\ddr u+\int_{x}^{\infty}\frac{e^{-u}}{u}\ddr u\right)+\int_{x}^{\infty}e^{-\theta z}\frac{\bar{\pi}(z)}{z}\ddr z<\infty.$$
\end{proof}
We end this appendix by studying the Laplace transform of the first entrance times of Ornstein-Uhlenbeck-type processes. Recall $(R_t,t\geq 0)$ the Ornstein-Uhlenbeck type process with parameters $\Psi$ and $c/2$ as introduced in Section \ref{existence}. We establish the formula (\ref{LaplacetransformhittingOU}) of the Laplace transform of $\sigma_a:=\inf\{t\geq 0, R_t\leq a\}$, needed in Lemma \ref{longtermminimal} and Lemma \ref{totalprogeny}. 
\begin{lemma}\label{hittingOU} Assume that $\Psi(z)\geq 0$ for some $z\geq 0$. For any $a\geq 0$ and $\mu>0$ 
\begin{equation*}\mathbb{E}_{z}[e^{-\mu \sigma_a}]=\frac{\int_{0}^{\infty} x^{\mu-1}e^{-zx-\int_{\theta}^{x}\frac{2\Psi(y)}{cy}\ddr y}\ddr x}{\int_{0}^{\infty}x^{\mu-1}e^{-ax-\int_{\theta}^{x}\frac{2\Psi(y)}{cy}\ddr y}\ddr x}.
\end{equation*}
\end{lemma}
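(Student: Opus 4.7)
The strategy is the classical one of exhibiting a $\mu$-eigenfunction $h$ for the generator $\mathscr{L}^R$ of the Ornstein-Uhlenbeck type process $(R_t,t\geq 0)$, so that $e^{-\mu t}h(R_t)$ is an exponential martingale, and then reading off the Laplace transform of $\sigma_a$ by optional stopping.

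To construct $h$, I would make the ansatz $h(z) = \int_0^\infty g(x)\, e^{-zx}\,\ddr x$. The key input is that each exponential $e_x\colon z\mapsto e^{-xz}$ is an eigenfunction of $\mathscr{L}^Y$, namely $\mathscr{L}^Y e_x(z) = \Psi(x)\, e^{-xz}$, obtained by differentiating $\mathbb{E}_z[e^{-xY_t}] = e^{-xz+t\Psi(x)}$ at $t=0$. Combined with $\mathscr{L}^R f(z) = \mathscr{L}^Y f(z) - \tfrac{c}{2}zf'(z)$, this yields
\[
\mathscr{L}^R h(z) \;=\; \int_0^\infty g(x)\,\bigl[\Psi(x) + \tfrac{c}{2}xz\bigr]\,e^{-xz}\,\ddr x.
\]
The factor $z$ in the drift is absorbed via the identity $z e^{-xz} = -\tfrac{\partial}{\partial x} e^{-xz}$ followed by an integration by parts in $x$. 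The eigenvalue equation $\mathscr{L}^R h = \mu h$ (or more generally $\mathscr{L}^R h = \lambda h$ with $\lambda$ a scalar multiple of $\mu$ dictated by the normalization) is then equivalent to a first-order linear ODE for $g$, whose solution up to a multiplicative constant is precisely $g(x) = x^{\mu-1}\, e^{-\int_\theta^x 2\Psi(y)/(cy)\,\ddr y}$ --- i.e.\ the integrand appearing in the statement. The choice $F(x) := \int_\theta^x 2\Psi(y)/(cy)\,\ddr y$ is tailored so that the $\Psi$-contribution of $\mathscr{L}^Y$ cancels exactly against the one produced by the IBP.

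With the eigenfunction in hand, the remainder is routine. The boundary terms in the IBP vanish because $\mu>0$ kills the polynomial factor at $x=0$, while the non-subordinator hypothesis $\Psi(z)\geq 0$ for some $z$ forces $\Psi(y)/y\to+\infty$ at infinity and hence rapid decay of $e^{-F(x)-xz}$ (this also ensures that $h$ itself is well defined). Since $h$ is positive and decreasing in $z$ and $R_{t\wedge\sigma_a}\geq a$, the process $M_t:=e^{-\mu t}h(R_{t\wedge\sigma_a})$ is bounded by $h(a)$, hence a genuine martingale. Optional stopping at $\sigma_a\wedge T$ and letting $T\to\infty$ gives $h(z) = h(a)\,\mathbb{E}_z[e^{-\mu\sigma_a}]$: on $\{\sigma_a<\infty\}$ the spectral positivity of $Y$ ensures that $R$ enters $[0,a]$ continuously, so $R_{\sigma_a}=a$; on $\{\sigma_a=\infty\}$, $M_T\to 0$ by boundedness of $h$ and $\mu>0$. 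Rearranging yields the stated identity, with the usual convention $e^{-\mu\cdot\infty}=0$.

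The main obstacle is the algebraic bookkeeping in the IBP/ODE step --- this is the heart of the argument and where the precise form of $F$ and the exponent of $x$ in $g$ get pinned down; the technical justification of differentiating under the integral sign and of the vanishing boundary terms is standard but deserves care. A secondary subtlety is the identification $R_{\sigma_a}=a$ on $\{\sigma_a<\infty\}$ required in the optional stopping step, which uses crucially that $Y$ has no negative jumps.
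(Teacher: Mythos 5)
Your proposal follows essentially the same route as the paper's proof in the Appendix: write the candidate eigenfunction as a Laplace transform $f_\mu(z)=\int_0^\infty e^{-xz}g_\mu(x)\,\ddr x$, pin down $g_\mu(x)=x^{\mu-1}e^{-\int_\theta^x \frac{2\Psi(y)}{cy}\ddr y}$ via the first-order ODE produced by integration by parts in $x$, deduce that $e^{-\mu t}f_\mu(R_{t\wedge\sigma_a})$ is a bounded martingale (using that $f_\mu$ is decreasing and $R$ has no negative jumps, so $R_{\sigma_a}=a$), and conclude by optional stopping. The only slip is the claim that $\Psi(y)/y\to+\infty$ (false e.g.\ for $\Psi(z)=\gamma z$); what the integrability of $g_\mu$ at infinity actually requires, and what the non-subordinator hypothesis delivers, is merely $\Psi(y)\to\infty$, so that $\frac{2\Psi(y)}{cy}$ eventually dominates $\frac{\mu+\varepsilon}{y}$ --- this does not affect the validity of the argument.
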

\begin{proof}
For any $\mu>0$, define $g_{\mu}(x)=x^{\mu-1}e^{-\int_{\theta}^{x}\frac{2\Psi(y)}{cy}\ddr y}$. The function $g_{\mu}$ solves the following equation $$\left(\Psi(x)-\mu+\frac{c}{2}\right)g_\mu(x)+\frac{c}{2}xg'_\mu(x)=0 \text{ for all } x\geq 0.$$ We check now that $\int_{0}^{\infty}g_{\mu}(x)\ddr x<\infty$. Note that either $\int_{0}^{\theta}\frac{\Psi(u)}{u}\ddr u=-\infty$ or $\int_{0}^{\theta}\frac{\Psi(u)}{u}\ddr u\in (-\infty,\infty)$. In both cases, for any $b>0$ there is a constant $C>0$ such that
$\int_{0}^{b}g_\mu(x)\ddr x\leq C\int_{0}^{b}x^{\mu-1}\ddr x<\infty$ since $\mu>0$. By assumption $\Psi$ is not the Laplace exponent of a subordinator, so there exists $a\in (0,\infty)$ such that for all $u\geq a$, $\Psi(u)\geq \Psi(a)>\frac{2\mu}{c}$. Then, for some other constant $C$,
\begin{align*}\int_{a}^{\infty}g_{\mu}(x)\ddr x&=C\int_{a}^{\infty}x^{\mu-1}e^{-\int_{a}^{x}\frac{2\Psi(u)}{cu}\ddr u}\ddr x\leq C\int_{a}^{\infty}x^{\mu-1}e^{-\int_{a}^{x}\frac{2\Psi(a)}{cu}\ddr u}\ddr x\\
&\leq C\int_{a}^{\infty}x^{-(\frac{2\Psi(a)}{c}-\mu)-1} \ddr x<\infty.
\end{align*}
Set $f_\mu(z)=\int_{0}^{\infty}e^{-xz}g_{\mu}(x)\ddr x$ for any $z\geq 0$. This is a $C_0^{2}$ decreasing function. For any $z\in (0,\infty)$ and $x\in (0,\infty)$, $\mathcal{L}^{R}e_x(z)=\Psi(x)e_{x}(z)+\frac{c}{2}xze_x(z).$  We now verify that $f_\mu$ is an eigenfunction of the generator $\mathcal{L}^{R}$: \begin{align*}
&\mathcal{L}^{R}f_\mu(z)-\mu f_{\mu}(z)\\
&=\int_{0}^{\infty}g_{\mu}(x)\left(\mathcal{L}^{R}e_{x}(z)-\mu e_{x}(z)\right)\ddr x=  \int_{0}^{\infty}g_{\mu}(x)
(\Psi(x)+\frac{c}{2}xz-\mu)e_{x}(z)\ddr x\\
&=\int_{0}^{\infty}(\Psi(x)-\mu)e^{-xz}g_{\mu}(x)\ddr x+\int_{0}^{\infty}\frac{c}{2}ze^{-xz}xg_\mu(x)\ddr x\\
&=\int_{0}^{\infty}(\Psi(x)-\mu)e^{-xz}g_{\mu}(x)\ddr x+\frac{c}{2}\left(\left[-e^{-xz}xg_\mu(x)\right]_{x=0}^{x=\infty}+\int_{0}^{\infty}e^{-xz}(g_\mu(x)+xg'_\mu(x))\ddr x\right)\\
&=\int_{0}^{\infty}\left((\Psi(x)-\mu+\frac{c}{2})g_\mu(x)+\frac{c}{2}xg'_\mu(x)\right)e^{-xz}\ddr x=0.
\end{align*}
By It\^o's formula (see e.g. \cite[Lemma 7]{MR2128632} for a similar calculation), the process $(e^{-\mu t}f_{\mu}(R_t),t\geq 0)$ is a local martingale. Since $(R_t,t\geq 0)$ has no negative jumps, and the function $f_\mu$ is decreasing, one has for any $t\leq \sigma_a$, $R_t\geq a$ and $f_\mu(R_t)\leq f_\mu(a)$ $\mathbb{P}_z$-a.s, for all $z\geq a$. Therefore, $(e^{-\mu (t\wedge \sigma_a)}f_{\mu}(R_{t\wedge \sigma_a}),t\geq 0)$ is a bounded martingale, and  by the optional stopping theorem, one get
$$\mathbb{E}_{z}[e^{-\mu \sigma_a}]=\frac{f_{\mu}(z)}{f_\mu(a)}.$$
\end{proof}

\textbf{Acknowledgements:} The author would like to thank Ger\'onimo Uribe-Bravo for a precious help in the proof of Lemma \ref{dualnonexplosive} and thanks Adrian Gonzales-Casanova for many helpful discussions. This work is partially  supported by the French National Research Agency (ANR): ANR GRAAL (ANR-14-CE25-0014) and by LABEX MME-DII (ANR11-LBX-0023-01). 

\bibliographystyle{amsalpha}

\end{document}